 \newtheorem{theorem}{Theorem}[section]
 \newtheorem{corollary}{Corollary}[section]
 \newtheorem{lemma}{Lemma}[section]
 \newtheorem{proposition}{Proposition}[section]
 \newtheorem{definition}{Definition}[section]
 \newtheorem{remark}{Remark}[section]
 \numberwithin{equation}{section}
\def\u{\tilde{u}}
\def\v{\tilde{v}}
\def\M{\mathcal{M}}
\def\u{\tilde{u}}
\def\e{\varepsilon}
\def\d{\delta}
\newcommand{\beq}{\begin{equation}}
\newcommand{\eeq}{\end{equation}}
 \def\non{\nonumber }
\def\bea{\begin{eqnarray}}
\def\eea{\end{eqnarray}}
\def\d{\delta}
\begin{document}
\title{On a Repulsion Keller--Segel System with a Logarithmic Sensitivity}
\author{Jie Jiang\thanks{Innovation Academy for Precision Measurement Science and Technology, CAS,
		Wuhan 430071, HuBei Province, P.R. China,
\textsl{jiang@wipm.ac.cn, jiang@apm.ac.cn}.}}

\date{\today}

\maketitle

\begin{abstract} In this paper, we study the initial-boundary value problem of a repulsion Keller--Segel system with a logarithmic sensitivity modeling the reinforced random walk. By establishing an energy-dissipation identity, we  prove the existence of classical solutions in two dimensions as well as existence of weak solutions in the three-dimensional setting. Moreover, it is shown that the weak solutions enjoys an eventual regularity property, i.e., it becomes regular after certain time $T>0$. An exponential convergence rate toward the spatially homogeneous steady states is obtained as well. We adopt a new approach developed recently by the author \cite{J19} to study the eventual regularity. The argument is based on observation of the exponential stability of constant solutions in scaling-invariant spaces together with certain dissipative property of the global  solutions in the same spaces.	
	
{\bf Keywords}: Chemotaxis, global existence, repulsion, logarithmic sensitivity, eventual regularity.\\
\end{abstract}
\section{Introduction}
Chemotaxis is the movement of cells in response to a chemical stimulus. If the movement is toward a higher concentration of the chemical, the motion is called chemo-attraction (or positive chemotaxis), while it is called chemo-repulsion (or negative chemotaxis) if  such a movement is in the opposite direction. PDE systems characterizing chemo-attraction such as the classical Keller--Segel model has been widely studied in recent years. A significant feature of chemo-attraction system is the aggregation of mass and therefore, blowup may take place in finite time or infinite time. A number of contributions have been devoted to the  blowup behavior of solutions, see e.g., \cite{BBTW15,HW01,HW05,MizoSoup,Win13,JWZ18,Win18}.

In contrast, there are only a few results in the existing literature for the chemo-repulsion system.
Consider the Neumann boundary value problem of the prototype chemo-repulsion Keller--Segel model:
\begin{equation}\label{chemo0}
\begin{cases}
\rho_t-\Delta \rho=\nabla \cdot(\rho\nabla c),\qquad &x\in\Omega,\;t>0\\
\gamma c_t-\Delta c+c=\rho\qquad &x\in\Omega,\;t>0
\end{cases}
\end{equation}where $\Omega\subset\mathbb{R}^d$ with $d\geq1$ is a bounded domain with smooth boundary. Here, $\rho$ and $c$ denote the density of cells and the concentration of chemical signal, respectively. If $\gamma=0$, it was proved in \cite{Mock74,Mock75} that solutions exist globally, which are uniformly bounded and converge with an exponential rate to the steady state, i.e., no finite-time blowup can take place.  However, when $\gamma>0$, similar result is not easy to prove due to the lack of estimates on $c_t$.  Based on a Lyapunov functional method, it was shown in \cite{CLM08} that with large initial data classical solution exists globally when $d=2$ and converges with an exponential rate to the steady state. When $d=3,4$, only weak solutions  were obtained and convergence toward steady state was also proved in certain weak sense. Recently, we proved that the three-dimensional weak solutions to \eqref{chemo0} enjoy an eventual smoothness property and converge to  the steady state exponentially \cite{J19} (in fact with slight modification of the proof, this is also true in four dimensions). There we proposed a unified new method to discuss the eventual smoothness  as well as exponential stabilization of weak solutions for two different kinds of chemotaxis models including the above chemo-repulsion one \eqref{chemo0} . The main idea was invoked by the recent study by the author \cite{J18a,J18b} on exponential stability of spatially homogeneous solutions in critical Lebesgue spaces  together with the dissipative property of the global weak solutions in the same spaces.  On the other hand,  if the chemo-sensitivity is nonlinear in $\rho$, Tao \cite{Tao13} considered the Neumann boundary problem of system \eqref{chemo0} with the first equation replaced by $\rho_t-\Delta \rho=\nabla \cdot(f(\rho)\nabla c)$ on a bounded convex domain with $d\geq3$, where $f(\rho)\leq K(1+\rho)^m$. Global existence of classical solution and convergence toward steady state were established under the assumption $0<m<\frac{4}{d+2}$.

If the cells are assumed to respond to the changes of the logarithm of the chemical concentration, i.e., following the Weber--Fechner law, the chemotactic movement is inhibited by the high chemical concentration. Recently, the following chemo-repulsion model with logarithmic sensitivity was proposed in \cite{LS97,OS97} to model the reinforced random walk:
\begin{equation}
\begin{cases}\label{chemo1g}
\rho_t-D\Delta \rho=\nabla \cdot(\chi\rho\nabla \log c)\\
c_t-\e\Delta c=g(\rho,c).
\end{cases}
\end{equation} When $g(\rho,c)=\rho c-\mu c$ with $\mu>0$, by the Hopf--Cole transformation and a Lyapunov functional method, existence and longtime behavior of global classical solution was studied in the one-dimensional setting in \cite{TWW13}.

In  the present paper, we consider the case $g(\rho,c)=\rho-c$, i.e., the production of chemical is proportional to the density. More precisely, we consider the following initial boundary value problem
\begin{equation}\label{chemo1}
\begin{cases}
\rho_t-\Delta \rho=\chi\nabla \cdot(\frac{\rho}{c} \nabla c),\qquad &x\in\Omega,\;t>0\\
\gamma c_t-\Delta c+c=\rho,\qquad &x\in\Omega,\;t>0\\
\partial_\nu\rho=\partial_\nu c=0,\qquad &x\in\partial\Omega,\;t>0\\
\rho(x,0)=\rho_I(x),\;\; c(x,0)=c_I(x),\qquad & x\in\Omega
\end{cases}
\end{equation}with some $\chi,\gamma>0.$

We mention that the corresponding chemo-attraction  Keller--Segel model with logarithmic sensitivity (i.e., $\chi<0$ in \eqref{chemo1}) has been studied in many recent works. However, theoretical results are far from satisfactory. Roughly speaking, existence of global solutions or blowups seems to be determined by the size of $\chi$. Blowup solution was constructed only  in the radial symmetric case when $\gamma=0$,  $n\geq3$ and $-\chi>\frac{2n}{n-2}$ \cite{Nagai98}. On the other hand, there are several attempts on enlarging the admissible range of $\chi$ ensuring global existence and however, the threshold number is still unclear. We refer the readers to \cite{Anh19,FS2016} for a complete review of related results.


To formulate our results,  we need to introduce some notion and notations. For any $a\geq0,$ denote by $L^p_a(\Omega)$ ($1\leq p<\infty$) the closed convex subset of $L^p(\Omega)$ satisfying $\overline{w}\triangleq\frac{1}{|\Omega|}\int_\Omega w dx=a$ with $w\in L^p(\Omega).$ Note that if $a=0$, $L^p_0(\Omega)$ is a Banach spaces and the following Poincar\'e's inequality holds:
\begin{equation}\non
\|w\|_{L^p(\Omega)}\leq c\|\nabla w\|_{L^p(\Omega)},\qquad\text{for all}\;\;w\in L^p_0(\Omega)
\end{equation}and we denote $\lambda_1$  the first positive eigenvalue of the Neumann Lapacian operator such that
\begin{equation}\label{poin}
\lambda_1\|w\|_{L^2(\Omega)}^2\leq \|\nabla w\|^2_{L^2(\Omega)} ,\qquad\text{for all}\;\;w\in L^2_0(\Omega).
\end{equation}
Then we introduce the same notion of weak solutions to \eqref{chemo1} as in \cite{CLM08}.
\begin{definition}\label{defwk}
	A global weak solution of $\eqref{chemo1}$ is a pair of  functions 
	\begin{equation}
	(\rho,c)\in C([0,\infty);\text{weak}-L^1(\Omega;\mathbb{R}^2),\non
	\end{equation}such that $\rho(x,t)\geq0$, $c(x,t)>0$ in $(0,T)\times\overline{\Omega}$ for any $T>0$,
	\begin{equation}
	\nabla \rho,\nabla c,\frac{\rho\nabla c}{c}\in L^1((0,T)\times\Omega)\non
	\end{equation}and 
	\begin{equation}
	\int_\Omega(\rho(t)-\rho_I)\varphi dx+\int_0^t\int_\Omega(\nabla \rho+\frac{\rho}{c} \nabla c)\cdot\nabla \varphi dxds=0\non,
	\end{equation}
	\begin{equation}\non
	\int_\Omega (c(t)-c_I)\varphi dx+\int_0^t\int_\Omega(\nabla c\cdot\nabla \varphi-(\rho-c)\varphi)dxds=0
	\end{equation}for each $t\geq0$ and $\varphi\in W^{1,\infty}(\Omega).$
\end{definition}
Throughout this paper, for the sake of simplicity, we assume that the initial data $\rho_I$ and $c_I$ satisfy
\begin{equation}\label{ini}
\rho_I\in C^0(\overline{\Omega}),\;c_I\in C^1(\overline{\Omega}),\qquad\partial_\nu c_I=0\;\;\text{on}\;\;\partial\Omega,\qquad\rho_I\geq0,\;\rho_I\not\equiv0,\;\;c_I>0 \;\;\text{in}\;\;\overline{\Omega}.
\end{equation}

Now we are in a position to state our main results. 
The first is concerned with existence of global solutions.
\begin{theorem}\label{TH1}For any given initial data $(\rho_I,c_I)$ satisfying \eqref{ini}, we have
	\begin{enumerate}[(i)]
		\item if $d=2$,  problem \eqref{chemo1} permits a unique classical solution;
		\item if $d=3$,  problem \eqref{chemo1} has a global weak solution $(\rho,c)$ in the sense of Definition \ref{defwk}. Moreover, for any $T>0$, there holds
		\begin{equation}
			\rho\in L^{5/4}(0,T; W^{1,5/4}(\Omega))
		\end{equation}and
		\begin{equation}
			 c\in C([0,T]; L^2(\Omega))\cap L^{4/3}(0,T; H^2(\Omega))\cap L^{2}(0,T;W^{2,3/2}(\Omega)).
		\end{equation}
	\end{enumerate}
\end{theorem}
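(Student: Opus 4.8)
The plan is to establish a priori estimates via an energy--dissipation identity and then pass to the limit in a suitable approximation scheme. First I would construct approximate solutions, either by regularizing the singular sensitivity $\rho/c$ (e.g. replacing it by $\rho/(c+\e)$ or $\rho_\e/c_\e$ with a cut-off) or by a standard Galerkin/fixed-point scheme, obtaining smooth local-in-time solutions $(\rho_\e,c_\e)$ that are nonnegative with $c_\e$ bounded below on compact time intervals; local existence and the positivity of $c$ follow from parabolic regularity and the maximum principle once $\rho\ge 0$ is known, and $\rho\ge0$ is preserved because the flux $-\nabla\rho-\chi(\rho/c)\nabla c$ vanishes when $\rho=0$. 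The conservation of total mass $\int_\Omega \rho\,dx\equiv\int_\Omega\rho_I\,dx=:m|\Omega|$ is immediate from the first equation with test function $1$.

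The heart of the argument is the energy--dissipation identity. Testing the first equation by $\log\rho$ (or rather $\log(\rho+\d)$ to stay regular, then letting $\d\to0$) and the second by a multiple of $c_t$ or by $-\Delta c/c$-type quantities, one looks for a Lyapunov functional of the form
\begin{equation}\non
\mathcal{E}(\rho,c)=\int_\Omega\big(\rho\log\rho-\rho+1\big)\,dx+\frac{\chi}{2}\int_\Omega\frac{|\nabla c|^2}{?}\,dx+\cdots
\end{equation}
whose time derivative is a nonpositive dissipation controlling $\int_\Omega|\nabla\sqrt{\rho}|^2$, $\int_\Omega\rho|\nabla\log c|^2$ and a $\gamma\|c_t\|$-type term. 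Since the sign $\chi>0$ is the repulsive one, these contributions should combine with a favourable sign; the precise form of $\mathcal{E}$ — in particular how the logarithmic sensitivity interacts with the $\gamma c_t$ term — is something I would extract by mimicking the computation in \cite{CLM08} for \eqref{chemo0}, adapted to the $\log c$ nonlinearity. From the bound on $\mathcal{E}$ and the dissipation one obtains, uniformly in $\e$: $\rho\in L^\infty(0,\infty;L\log L)$, $\sqrt{\rho}\in L^2_{loc}(0,\infty;H^1)$, hence $\rho\in L^2_{loc}(0,\infty;L^q)$ for $q$ up to the 2D/3D Sobolev exponent of $H^1$ applied to $\sqrt\rho$, i.e. $\rho\in L^2_{loc}L^{d/(d-2)\cdot? }$; in $d=3$ this gives $\rho\in L^{5/3}_{loc}L^{5/3}$ by interpolation with the mass bound. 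Standard parabolic regularity on the second equation then upgrades $c$ to $L^{4/3}_{loc}H^2\cap L^2_{loc}W^{2,3/2}$ and $c_t\in L^{4/3}_{loc}L^2$, and feeding this back into the first equation (now viewed as $\rho_t-\Delta\rho=\chi\,\mathrm{div}(\rho\nabla\log c)$ with $\nabla\log c=\nabla c/c$ controlled because $\log c$ enjoys good estimates — here the pointwise lower bound on $c$, or rather an estimate on $1/c$, is crucial) yields the claimed $\rho\in L^{5/4}_{loc}W^{1,5/4}$. In $d=2$ the same estimates are subcritical and a bootstrap gives boundedness of $\rho$ and then full classical regularity and uniqueness via standard arguments.

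The main obstacle, and the point requiring the most care, is controlling the singular term $\nabla c/c$, equivalently obtaining a quantitative lower bound for $c$ (or an $L^p$ bound for $1/c$) that survives the limit $\e\to0$. A crude time-dependent lower bound $c(x,t)\ge c_0 e^{-t}$ follows from the maximum principle applied to $\gamma c_t-\Delta c+c=\rho\ge0$ together with $c_I>0$, but one needs this to be compatible with the energy functional — ideally $\int_\Omega\rho|\nabla\log c|^2$ or $\int_\Omega|\nabla\log c|^2$ should appear directly in the dissipation so that $\nabla\log c$ is controlled in $L^2_{loc}L^2$ without recourse to pointwise bounds on $1/c$. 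I expect the correct Lyapunov functional to deliver exactly this, which is why identifying $\mathcal E$ precisely is the linchpin of the proof; once the a priori bounds are in hand, the compactness argument (Aubin--Lions for $\rho$ and $c$, a.e. convergence, and passage to the limit in the weak formulation of Definition~\ref{defwk}, using that $\rho_\e\nabla\log c_\e\rightharpoonup\rho\nabla\log c$ by strong--weak convergence) is routine. For part (i), uniqueness follows by a standard $L^2$ (or $H^{-1}$ for $\rho$, $H^1$ for $c$) energy estimate on the difference of two solutions, using the already-established boundedness of $\rho$ and the lower bound on $c$.
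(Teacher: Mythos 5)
Your high-level strategy (approximation, entropy/energy identity, compactness) matches the paper's, but the two ingredients you yourself flag as the linchpin are exactly the ones left unresolved, and they are not routine. First, the Lyapunov functional: the correct choice is $E(\rho,c)=\int_\Omega\big(\rho\log\rho+\tfrac{|\nabla c|^2}{2c}\big)dx$, and the computation (test the first equation with $\log\rho$, the second with $-\Delta c/c$) produces the dissipation $\int_\Omega\frac{|\nabla\rho|^2}{\rho}+\int_\Omega c|\nabla^2\log c|^2+\int_\Omega\frac{\rho|\nabla c|^2}{2c^2}+\int_\Omega\frac{|\nabla c|^2}{2c}$ \emph{plus} a boundary term $\frac12\int_{\partial\Omega}c^{-1}\partial_\nu|\nabla c|^2$, which is nonpositive only on convex domains and otherwise must be absorbed by a trace-interpolation lemma; your sketch never anticipates either the Hessian term or the boundary term. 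This matters because the stated regularity of $c$ is \emph{not} obtained by ``standard parabolic regularity'' from $\rho\in L^{5/3}_{t,x}$ (maximal regularity would give $W^{2,5/3}$ in space, not $H^2$): the paper derives $c\in L^{4/3}(0,T;H^2)\cap L^2(0,T;W^{2,3/2})$ from the dissipation term $\int_\Omega c|\nabla^2\log c|^2$, via $\|\Delta c\|_{L^2}^2\le\|c\|_{L^\infty}\int_\Omega|\Delta c|^2/c$, Agmon's inequality, and the pointwise inequalities of Lemma \ref{lm6}. (Your route could be repaired by first recording the full interpolated family $\rho\in L^p_tL^{3p/(3p-2)}_x$ and applying maximal regularity with $p=4/3$ and $p=2$, but you do not state those bounds.) Similarly, the $\rho\in L^{5/4}W^{1,5/4}$ claim is a one-line H\"older estimate from $\int\frac{|\nabla\rho|^2}{\rho}\in L^1_t$ and $\rho\in L^{5/3}_{t,x}$; feeding the equation back through parabolic regularity is the wrong (and harder) route.

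Second, the approximation scheme you lead with — regularizing the denominator to $\rho/(c+\e)$ — addresses a non-problem: $c$ already has a strictly positive lower bound, uniform in time, by the Fujie--Senba lemma (so your worry about $c\ge c_0e^{-t}$ and about controlling $1/c$ is moot). The actual obstruction is that in $d=3$ the local classical solution cannot be continued without $L^\infty$ control of $\rho$, which the entropy bounds do not provide; the paper therefore uses the volume-filling truncation $\nabla\cdot\big(\rho_\e(1-\e\rho_\e)\nabla c_\e/c_\e\big)$, which enforces $\rho_\e\le1/\e$ (hence global approximate solutions) while still admitting the modified entropy $\int\big[\rho\log\rho+\e^{-1}(1-\e\rho)\log(1-\e\rho)\big]$ and the same dissipation up to harmless factors. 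Without some device of this kind your compactness argument has no global approximate solutions to act on. The remaining items (2D bootstrap via $\int_0^T\|\Delta\log c\|^2\,dt<\infty$ and Moser iteration; Aubin--Lions with the dual bounds on $\partial_t\rho_\e$; weak convergence of $\rho_\e\nabla\log c_\e$ in $L^{20/17}$) are in the spirit of what you describe and I would accept them as routine once the two gaps above are filled.
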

We remark that existence of weak solutions in the four-dimensional case is yet unknown since the estimates for $c$ provided by our energy-dissipation relation \eqref{energy} are weaker than those derived for system \eqref{chemo0} in \cite{CLM08}.

Regarding the longtime behavior, if the domain is {\it convex}, we may employ a Lyapunov functional to obtain the folllowing  eventual regularity and exponential stabilization of the weak solutions.
\begin{theorem}\label{TH2}
	Let  $\Omega\subset\mathbb{R}^3$  be a bounded convex domain with smooth boundary. For any given initial data $(\rho_I,c_I)$ satisfying \eqref{ini}, problem \eqref{chemo1} possesses a global weak solution $(\rho,c)$  in the sense of Definition \ref{defwk} and moreover, for any $0<\mu'<\lambda_1$, there is $\tau_0>0$ such that this solution is bounded and belongs to $C^{2,1}(\overline{\Omega}\times[\tau_0,\infty))$ and
	\begin{equation}
	\|\rho(t)-\M\|_{L^{\infty}(\Omega)}+\|\nabla c(t)\|_{L^{\infty}(\Omega)}\leq C e^{-\mu'(t-\tau_0)}\qquad \text{for}\;\; t\geq \tau_0
	\end{equation}and
	\begin{equation}
	\|c(t)-\M\|_{L^{\infty}(\Omega)}\leq C e^{-\min\{\mu',1\}(t-\tau_0)}\qquad \text{for}\;\; t\geq \tau_0
	\end{equation}
	where $\M\triangleq\frac{1}{|\Omega|}\int_\Omega\rho_I dx$ and $C>0$ depends only on $\mu'$, $\Omega$ and the initial data. 
\end{theorem}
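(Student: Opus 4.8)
\textbf{Proof plan for Theorem \ref{TH2}.}

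The plan is to follow the strategy of \cite{J19}: establish exponential stability of the constant steady state $(\M,\M)$ in a scaling-critical space, show that the global weak solution constructed for Theorem \ref{TH1}(ii) eventually enters a small neighborhood of $(\M,\M)$ in that same space, and then combine the two to propagate smallness and bootstrap to full regularity and the stated exponential decay. Since existence of the weak solution is already given by Theorem \ref{TH1}(ii), the work is entirely in the longtime analysis.

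First I would exploit convexity of $\Omega$ to derive a differential inequality for a Lyapunov functional. On a convex domain one has $\partial_\nu|\nabla c|^2\le 0$ on $\partial\Omega$, which makes estimates on $\int_\Omega|\nabla c|^2/c$ and related quantities tractable; testing the $\rho$-equation against suitable functions of $\rho$ (e.g. $\log\rho$ or $\rho-\M$) and the $c$-equation against $-\Delta c$ or $\Delta c/c$, and combining, I expect to recover a dissipation identity refining \eqref{energy} that shows certain norms are integrable in time on $(0,\infty)$. From $\int_0^\infty(\cdots)\,dt<\infty$ one extracts a sequence $t_k\to\infty$ along which the solution is small in the critical space; the uniform-in-time control coming from the Lyapunov structure then upgrades this to: for every $\e>0$ there is $T_\e$ with $\|\rho(T_\e)-\M\|_{X}+\|c(T_\e)-\M\|_{Y}<\e$ for appropriate scaling-invariant $X,Y$ (in $d=3$, $X$ of the type $L^{3/2}$ or $L^3$ and $Y$ a matching Sobolev space, as in \cite{J18a,J18b}).

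Next I would invoke the exponential stability of the constant solution in the critical space: linearizing \eqref{chemo1} about $(\M,\M)$, the $\rho$-equation becomes a perturbed heat equation with zero-mean data (mass conservation $\overline{\rho}\equiv\M$) so its semigroup decays like $e^{-\lambda_1 t}$ on $L^p_0$, and the $c$-equation has the extra $+c$ term giving decay governed by $\min\{\lambda_1,1\}$; a fixed-point/continuation argument in these spaces shows that if the data at time $T_\e$ is $\e$-small then the solution stays $O(\e)$-small for all later times and decays at rate $\mu'$ for any $\mu'<\lambda_1$. Restarting the weak solution from $t=T_\e$ with this small data and using uniqueness in the small-data class, one concludes the weak solution coincides with this decaying strong solution on $[T_\e,\infty)$; parabolic smoothing then gives $(\rho,c)\in C^{2,1}(\overline{\Omega}\times[\tau_0,\infty))$ for any $\tau_0>T_\e$, and standard $L^p$--$L^\infty$ parabolic regularity converts the critical-space decay into the $L^\infty$ bounds on $\rho(t)-\M$, $\nabla c(t)$, and $c(t)-\M$ with the stated rates $\mu'$ and $\min\{\mu',1\}$.

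The main obstacle I anticipate is the first step: constructing a Lyapunov functional whose dissipation is strong enough to force the solution into the critical-space neighborhood, given the remark in the excerpt that the energy-dissipation relation \eqref{energy} here is weaker than the one available for \eqref{chemo0}. Handling the singular term $\rho/c$ requires lower bounds on $c$ (or at least control of $\int 1/c$), which is where convexity and the boundary sign condition $\partial_\nu|\nabla c|^2\le0$ are essential; getting a clean, closed differential inequality without those auxiliary bounds breaking down is the delicate point. The second and third steps are comparatively routine once the right scaling-invariant spaces are fixed, being close adaptations of \cite{J18a,J18b,J19}.
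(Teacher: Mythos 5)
Your overall two-step strategy (energy dissipation on the convex domain forces the solution into a small critical-space neighborhood of $(\M,\M)$; exponential stability of $(\M,\M)$ in that critical space then propagates smallness and yields decay) is exactly the paper's strategy, and your first two steps match Lemma \ref{lya}, Lemma \ref{decay00}, Lemma \ref{lm0} and Proposition \ref{propB}. The gap is in your third step, where you ``restart the weak solution from $t=T_\e$ with this small data and use uniqueness in the small-data class'' to identify it with the decaying strong solution. This does not go through: the weak solution of Definition \ref{defwk} at time $T_\e$ is only known to lie in $L^{3/2}\times W^{1,3}$-type spaces, which is not admissible initial data for the local classical theory (Theorem \ref{locala} requires $C^0\times C^1$ data), and no weak--strong uniqueness result for this system is available, so even if a decaying classical solution emanated from $(\rho(T_\e),c(T_\e))$ you could not conclude that the weak solution coincides with it afterward. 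This is precisely the difficulty the paper flags in the introduction as the reason the naive argument fails for weak solutions.

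The paper's resolution, which your plan is missing, is to run the entire argument at the level of the approximating classical solutions $(\rho_\e,c_\e)$ of the volume-filling system \eqref{chemo1a} \emph{before} passing to the limit: the dissipation estimate \eqref{timeint0} is uniform in $\e$, Lemma \ref{lm0} produces a waiting time $t_\eta$ independent of $\e$ at which all $(\rho_\e,c_\e)$ are simultaneously $\eta$-small in $L^{3/2}\times \dot W^{1,3}\cap L^\infty$, Proposition \ref{propB} is proved uniformly in $\e$ for the perturbed system \eqref{app2a} (note the nonlinearity there carries the extra $(1-\e\rho_\e)$ factor, so the stability proof must accommodate it), and uniform-in-$\e$ Schauder estimates (Lemma \ref{lm46}) then give compactness in $C^{2,1}$ on $[\tau_0,\infty)$. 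The eventual smoothness and the decay rates of the limit weak solution are inherited from these uniform bounds via Arzel\`a--Ascoli, not obtained by restarting the weak solution itself. Without this uniform-in-$\e$ formulation your step three cannot be closed.
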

\begin{corollary}
	If $d=2$ and $\Omega$ is convex,  the classical solution will converge to $(\M,\M)$ exponentially with the convergence rates given above.
\end{corollary}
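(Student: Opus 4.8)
The corollary asserts that in two dimensions with a convex domain, the classical solution (whose existence is Theorem~\ref{TH1}(i)) converges exponentially to the homogeneous steady state $(\M,\M)$ with the same rates as in Theorem~\ref{TH2}. The plan is to observe that the two-dimensional classical solution is, in particular, a global weak solution in the sense of Definition~\ref{defwk}, so Theorem~\ref{TH2} applies to it verbatim; the only extra content to verify is that the \emph{particular} classical solution from Theorem~\ref{TH1}(i) coincides with the weak solution produced in Theorem~\ref{TH2}, which follows from the uniqueness asserted in Theorem~\ref{TH1}(i). Hence the bounds
\[
\|\rho(t)-\M\|_{L^\infty(\Omega)}+\|\nabla c(t)\|_{L^\infty(\Omega)}\leq Ce^{-\mu'(t-\tau_0)},\qquad
\|c(t)-\M\|_{L^\infty(\Omega)}\leq Ce^{-\min\{\mu',1\}(t-\tau_0)}
\]
carry over for $t\geq\tau_0$, and since a classical solution is already smooth and bounded on $[0,\tau_0]$, these exponential estimates extend (after enlarging $C$) to all $t\geq0$, yielding convergence to $(\M,\M)$.

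First I would record that $\M=\frac{1}{|\Omega|}\int_\Omega\rho_I\,dx$ is conserved: integrating the first equation of \eqref{chemo1} over $\Omega$ and using the no-flux boundary condition gives $\frac{d}{dt}\int_\Omega\rho\,dx=0$, so the mass stays at $\M$ for all time. Integrating the second equation similarly shows $\gamma\frac{d}{dt}\overline{c}+\overline{c}=\overline{\rho}=\M$, whence $\overline{c}(t)\to\M$ exponentially; this identifies $(\M,\M)$ as the only spatially homogeneous steady state compatible with the initial mass, consistent with the statement. Second, I would invoke Theorem~\ref{TH1}(i): the classical solution is unique and, being $C^{2,1}$ with $c>0$, it satisfies all the integral identities in Definition~\ref{defwk}, so it is \emph{the} global weak solution. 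Third, apply Theorem~\ref{TH2} with $d=2$ — its hypotheses are convexity of $\Omega$ and initial data satisfying \eqref{ini}, both in force here — to obtain $\tau_0>0$ and the exponential decay for $t\geq\tau_0$. Fourth, on the compact interval $[0,\tau_0]$ the classical solution and its gradients are bounded (by continuity and smoothness), so $\|\rho(t)-\M\|_{L^\infty}+\|\nabla c(t)\|_{L^\infty}+\|c(t)-\M\|_{L^\infty}$ is bounded there; absorbing the factor $e^{\mu'\tau_0}$ into the constant yields the stated decay for all $t\geq0$, in particular convergence to $(\M,\M)$.

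There is essentially no hard analytic step here: the corollary is a consequence of combining the uniqueness in Theorem~\ref{TH1}(i) with the conclusion of Theorem~\ref{TH2}. The only point requiring a word of care is the identification of the $d=2$ classical solution with the weak solution of Theorem~\ref{TH2} — strictly, Theorem~\ref{TH2} produces \emph{a} weak solution, and one must know the classical solution is \emph{that one}; this is immediate from uniqueness in the (smaller, hence also applicable) classical class once one checks a classical solution is a weak solution, which is routine since $C^{2,1}$ regularity lets one integrate by parts and recover Definition~\ref{defwk}. A secondary bookkeeping point is that $\tau_0$ in Theorem~\ref{TH2} depends on $\mu'$ and the data but not on which solution one picks, so the constants in the corollary inherit the same dependence; no new estimate is needed.
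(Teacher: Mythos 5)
There is a genuine gap: Theorem~\ref{TH2} is stated (and proved) only for $\Omega\subset\mathbb{R}^3$, so it cannot be applied ``verbatim'' to a two-dimensional domain, and the step of your argument that invokes it with $d=2$ is not a legitimate application of the theorem as stated. The dimension enters in an essential way through Lemma~\ref{decay00}, whose embeddings ($W^{1,1}(\Omega)\hookrightarrow L^{3/2}(\Omega)$, the three-dimensional Agmon and Gagliardo--Nirenberg inequalities) are specific to $d=3$ and are exactly what converts the dissipation \eqref{timeint} into integrability of the scaling-invariant norms. In two dimensions the relevant scaling-invariant spaces are $L^{d/2}\times\dot W^{1,d}\cap L^\infty=L^{1}\times\dot W^{1,2}\cap L^\infty$, and one must supply the $2$D analogues: e.g.\ $\|\rho-\M\|_{L^1}^2\leq C\big(\int_\Omega\frac{|\nabla\rho|^2}{\rho}\big)\big(\int_\Omega\rho\big)$ via H\"older and Poincar\'e in $W^{1,1}$, $\|\nabla c\|_{L^2}^2\leq\|c\|_{L^\infty}\int_\Omega\frac{|\nabla c|^2}{c}$ together with a $2$D bound for $\|c\|_{L^\infty}$, and a $2$D Agmon-type bound for $\|c-\overline c\|_{L^\infty}$. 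These are routine but are not in the paper for $d=2$ and are not in your proposal; once they are in place, the correct route is to combine Lemma~\ref{lya}, Lemma~\ref{lm0} and Proposition~\ref{propB} (which \emph{is} stated for $d\geq2$) directly with $\e=0$ applied to the global classical solution --- no detour through weak solutions is needed in $2$D.

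A secondary flaw is the identification of the classical solution with the weak solution produced by Theorem~\ref{TH2}. Theorem~\ref{TH1}(i) asserts uniqueness within the class of classical solutions; it does not give weak--strong uniqueness, so it does not by itself exclude the possibility that the weak solution constructed as a limit of the approximations \eqref{chemo1a} differs from the classical one. You correctly flag this point but resolve it with the wrong tool. The clean fix is the one above: bypass Theorem~\ref{TH2} entirely and run its proof on the classical solution itself with $\e=0$, for which the issue does not arise. Your final bookkeeping step (boundedness on $[0,\tau_0]$ plus decay for $t\geq\tau_0$ gives decay for all $t\geq0$) is fine.
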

As a byproduct of our approach, we obtain the exponential stability of spatially homogeneous solutions in the scaling-invariant spaces. Note that the convexity assumption on domain is not needed here.
\begin{theorem}\label{TH3}
		Let $d\geq2.$ For any given constants $\M>0$ and $\mu'<\lambda_1$, there exists $\varepsilon_0>0$ depending on $\M$, $d$, $\mu'$ and $\Omega$ such that for any  initial data $(\rho_I,c_I)\in \left(C(\overline{\Omega})\cap L^1_\M(\Omega)\right)\times C^1(\overline{\Omega})$ satisfying \eqref{ini} and  $\|\rho_I-\M\|_{L^{d/2}(\Omega)}+\|\nabla c_I\|_{L^d(\Omega)}+\|c_I-\M\|_{L^\infty(\Omega)}\leq\varepsilon_0$, problem \eqref{chemo1}  has a unique global classical solution such that
	\begin{equation}
	\|\rho(\cdot,t)-\M\|_{L^\infty(\Omega)}+\|\nabla c(\cdot,t)\|_{L^\infty(\Omega)}\leq Ce^{-\mu' t}\qquad\text{for all}\;\;t\geq1
	\end{equation} 
	and
	\begin{equation}
	\|c(t)-\M\|_{L^{\infty}(\Omega)}\leq C e^{-\min\{\mu',1\}t}\qquad \text{for all}\;\; t\geq 1
	\end{equation}with some $C>0.$
\end{theorem}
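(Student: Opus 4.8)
The plan is to read Theorem~\ref{TH3} as a small--data global well--posedness statement in the scaling--critical classes ($\rho-\M\in L^{d/2}$, $\na c\in L^d$, $c-\M\in L^\infty$), proved by a Fujita--Kato type contraction together with a continuation argument, using the Duhamel representation and the smoothing/decay estimates for the two heat semigroups that drive \eqref{chemo1}. Three structural ingredients are exploited. First, mass conservation forces $\overline{\rho}(t)\equiv\M$, so that $\rho-\M$ lives in the mean--zero subspace of $L^p$, on which the Neumann heat semigroup enjoys the spectral--gap bounds $\|e^{t\Delta}w\|_{L^q}\le C(1+t^{-\frac d2(\frac1p-\frac1q)})e^{-\lambda_1 t}\|w\|_{L^p}$ and $\|\na e^{t\Delta}w\|_{L^q}\le C(1+t^{-\frac12-\frac d2(\frac1p-\frac1q)})e^{-\lambda_1 t}\|w\|_{L^p}$. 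Second, the signal equation is $\pa_t(c-\M)=\gamma^{-1}(\Delta-1)(c-\M)+\gamma^{-1}(\rho-\M)$, whose dissipative generator contracts on all of $L^p$; the extra $-1$ accounts for the appearance of $\min\{\mu',1\}$ in the rate for $c$. Third, near the constant state the logarithmic flux decomposes as
\[
\frac{\rho}{c}\,\na c=\na(c-\M)+\frac{(\rho-\M)-(c-\M)}{c}\,\na(c-\M),
\]
so that, once the pointwise bound $\M/2\le c\le 2\M$ is in force, the last summand is quadratic in the perturbation and $1/c$ is a bounded smooth factor, while the first summand is a linear coupling.

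First I would invoke local well--posedness: under \eqref{ini} parabolic theory (as in the proof of Theorem~\ref{TH1}) yields a unique classical solution on a maximal interval $[0,T_{\max})$ with $\rho\ge0$, $c>0$, and $\overline\rho\equiv\M$. The heart of the proof is an a priori bound in scaling--invariant norms. For $T<T_{\max}$ I would set
\[
E(T)=\sup_{0<t\le T}e^{\mu' t}\Big(\|\rho(t)-\M\|_{L^{d/2}}+t^{1/2}\|\rho(t)-\M\|_{L^{d}}+t\|\rho(t)-\M\|_{L^\infty}+\|\na c(t)\|_{L^{d}}+t^{1/2}\|\na c(t)\|_{L^\infty}\Big)+\sup_{0<t\le T}e^{\min\{\mu',1\}t}\|c(t)-\M\|_{L^\infty},
\]
possibly augmented with a few further time--weighted pieces. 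Plugging the Duhamel formulas for $\rho-\M$ and $c-\M$ together with the flux splitting into this functional, and estimating each term by the semigroup bounds, H\"older's inequality, and the elementary convolution estimate
\[
\int_0^t(t-s)^{-a}s^{-b}\,e^{-\lambda_1(t-s)}e^{-\mu's}\,ds\ \lesssim\ (1+t^{-a})e^{-\mu't}\qquad(a,b<1,\ \mu'<\lambda_1),
\]
which uses exactly $\lambda_1-\mu'>0$, one is led to a closed inequality $E(T)\le C_0(\e_0+E(T)^2)$ with $C_0$ independent of $T$. For $\e_0$ small enough a standard continuity argument then gives $E(T)\le 2C_0\e_0$ throughout $[0,T_{\max})$; in particular $\|c-\M\|_{L^\infty}\le\M/2$, which retroactively justifies the pointwise bounds on $c$ used above.

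It remains to globalize and to reach the $L^\infty$ statements. The uniform smallness of the critical norms feeds parabolic regularity theory: through $L^p$ estimates and interior/up--to--the--boundary Schauder estimates (using the Neumann condition and the $C^0$--$C^1$ regularity of the data) one obtains uniform bounds on $\|\rho\|_{L^\infty}$, $\|\na c\|_{L^\infty}$ and on H\"older norms on $[1,T_{\max})$, so the standard extensibility criterion yields $T_{\max}=\infty$ and $(\rho,c)\in C^{2,1}(\overline\Omega\times[1,\infty))$. Finally, running the Duhamel estimates once more from $t=1$ with the already--established decay of $\|\rho-\M\|_{L^d}$ and $\|\na c\|_{L^d}$ as sources, the $L^\infty$--$L^d$ smoothing of $e^{t\Delta}$ promotes the decay of $\rho-\M$ and $\na c$ to rate $e^{-\mu' t}$ in $L^\infty$, while the relaxation in the $c$--equation combined with the decay of its $\rho-\M$ source produces the rate $e^{-\min\{\mu',1\}t}$ for $\|c-\M\|_{L^\infty}$; uniqueness is inherited from the local theory. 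Note that convexity of $\Omega$ plays no role here, in contrast with Theorem~\ref{TH2}.

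The step I expect to be the main obstacle is the a priori estimate of $\rho-\M$ in the lowest critical norm $L^{d/2}$. On the one hand, the Duhamel contribution of the quadratic part of the flux only lies, a priori, in a space of low integrability, and the corresponding $\na e^{t\Delta}$ kernel then carries a non--integrable $(t-s)^{-1}$ singularity; this is why one cannot close on $\{\|\rho-\M\|_{L^{d/2}},\|\na c\|_{L^d},\|c-\M\|_{L^\infty}\}$ alone but must thread through the additional time--weighted higher--integrability norms (up to $t\|\rho-\M\|_{L^\infty}$), so that every factor sits in a space where the relevant kernel is time--integrable, and then verify that the whole web of bilinear estimates closes with \emph{consistent} exponential weights --- which is precisely where the loss $\mu'<\lambda_1$ is spent. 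On the other hand, the flux contains the linear coupling $\chi\na\cdot\na(c-\M)=\chi\Delta(c-\M)$, of perturbative order one rather than two; handling it without spoiling the decay rate forces one to keep track of the coupling through the convolution structure (equivalently, to work with the linearization of the full $(\rho,c)$ system), and the margin $\lambda_1-\mu'>0$ is again what makes this possible. A final, routine but essential, point is that the logarithmic factor $1/c$ must be controlled from the outset by the lower bound $c\ge\M/2$, which the continuity argument has to carry along together with $E(T)$.
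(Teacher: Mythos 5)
Your overall framework (perturbation of the constant state, Duhamel representation, spectral-gap decay of the Neumann semigroup on mean-zero data, a continuity argument in time-weighted scaling-critical norms) is the right one and is broadly how the paper proceeds: Theorem \ref{TH3} is obtained there as the case $\e=0$ of Proposition \ref{propB}. However, the central inequality you claim, $E(T)\le C_0(\e_0+E(T)^2)$, does not follow from the scalar semigroups $e^{t\Delta}$ and $e^{t(\Delta-1)}$ alone. After your flux splitting, the cell equation still contains the \emph{linear} coupling $\chi\Delta(c-\M)=\chi\nabla\cdot\nabla(c-\M)$, whose Duhamel contribution to $e^{\mu' t}\|\rho(t)-\M\|_{L^{d/2}}$ is controlled only by an $O(1)$ multiple of $\sup_s e^{\mu' s}\|\nabla c(s)\|_{L^{d}}$; symmetrically, the source $\rho-\M$ in the signal equation feeds back into $\|\nabla c\|_{L^d}$ with another $O(1)$ constant (a product of the semigroup constants $k_2,k_4$ and a convolution constant that degenerates as $\mu'\to\lambda_1$). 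The inequality one actually obtains is $E(T)\le C_0\e_0+C_1E(T)+C_0E(T)^2$ with $C_1$ not small, so neither the fixed point nor the continuity argument closes. You flag exactly this issue in your last paragraph and name the correct remedy (``work with the linearization of the full $(\rho,c)$ system''), but that remedy is the nontrivial content of the proof, and your proposal does not supply it.

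What is missing is precisely the paper's Lemma \ref{lmexdecaypp} together with its $L^p$--$L^q$ upgrades (Lemma \ref{keylem1}, Lemma \ref{Cor}): the semigroup $e^{t\mathcal{A}}$ generated by the \emph{coupled} linearized operator must be shown to decay at the rate $\lambda_1$. This rests on the exact cancellation of the cross terms in the identity
\begin{equation}\nonumber
\frac12\frac{d}{dt}\left(\|\u\|^2+a\|\nabla\v\|^2\right)=-\|\nabla\u\|^2-a\|\Delta\v\|^2-a\|\nabla\v\|^2\le-\lambda_1\left(\|\u\|^2+a\|\nabla\v\|^2\right),
\end{equation}
a cancellation available only because of the repulsive sign of the coupling (for the attractive system the same pairing produces an indefinite cross term). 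With this in hand, the paper writes Duhamel with respect to $e^{t\mathcal{A}}$, so that the only remaining forcing is the genuinely quadratic remainder $\nabla\cdot\bigl((u-v)\tfrac{\nabla v}{v+\M}\bigr)$ (up to $\e$-corrections), and closes a one-step contraction on the single weighted quantity $\|u(t)-\u(t)\|_{L^{q_0}}\le\eta e^{-\mu' t}(1+t^{-1+\frac{d}{2q_0}})$, recovering $\|\nabla v\|_{L^{p_0}}$, $\|v\|_{L^\infty}$ and then the $L^\infty$ decay by a scalar-semigroup bootstrap, much as in your final step. Your argument becomes a proof once the scalar Duhamel formula is replaced by the one based on $e^{t\mathcal{A}}$ and the decay of that semigroup is established; as written, this key step is asserted rather than proved.
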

We would like to mention that exponential stability of constant solutions for chemo-attraction Keller--Segel equations in scaling-invariant spaces has been recently studied by the author in \cite{J18a,J18b}. The proof was based on certain delicate $L^p-L^q$ decay estimates for the corresponding linearized semigroups together with a modification of a one-step contraction argument borrowed from \cite{Win10,Cao}.

The main strategy of our proof for Theorem \ref{TH2}--Theorem \ref{TH3} consists of two steps. Thanks to an energy-dissipation inequality, we first prove that for any small neighborhood of the spatially homogeneous steady state in the topology of scaling-invariant spaces, there always exists a waiting time at which the solution will fall into this neighborhood. Second, we establish the stability result of the constant steady states in the scaling-invariant spaces. Then once the solution falls into a sufficiently small neighborhood of  the constant steady states, the stability implies that it will stay  there forever.

For the reader's convenience, we briefly sketch the idea  here since our method is not conventional. Assuming we already have Theorem \ref{TH3}, then by the energy-dissipation inequality established in Lemma \ref{lya} below, a global solution $(\rho,c)$ on a convex domain satisfies
\begin{equation}\label{dissipation}
\int_0^\infty\int_\Omega\left(\frac{|\nabla\rho|^2}{\rho}+c|\nabla^2\log c|^2+\frac{|\nabla c|^2}{c}\right) dx dt<C^*
\end{equation} with $C^*$ depending only on  the initial data and $\Omega$. Then by some embeddings established in Lemma \ref{decay00}, there holds
\begin{equation}\label{idea4}
\int_0^\infty\left(\|\rho-\M\|^2_{L^{3/2}(\Omega)}+\|\nabla c\|_{L^3(\Omega)}^2+\|c-\M\|_{L^\infty(\Omega)}^2\right)dt\leq C^*.
\end{equation}
Apparently, this inequality  implies some dissipative property of the solution in the scaling-invariant spaces, e.g., there is a time sequence $t_k\rightarrow+\infty$ such that $\|\rho(t_k)-\M\|^2_{L^{3/2}(\Omega)}+\|\nabla c(t_k)\|_{L^3(\Omega)}^2+\|c(t_k)-\M\|_{L^\infty(\Omega)}^2\rightarrow0$. Then, we can regard the solution as a trajectory originating from $(\rho(t_k),c(t_k))$ for sufficiently large $k$ and in view of the stability in the scaling-invariant space $L^{3/2}\times {\dot{W}}^{1,3}\cap L^\infty$, we may anticipate the eventual smoothness as well as the exponential convergence.

However  the proof is not straightforward along the above idea when we deal with the {\it weak}  solutions due to the lack of regularity. To overcome this difficulty, one needs to introduce proper approximation problems and  establish the exponential stability result as in Theorem \ref{TH3} which holds uniformly with respect to a family of approximating systems. To this aim, we need  to make use of  an approximation scheme called volume-filling effect introduced  in \cite{CLM08} (see also \cite{HP09}) which preserves the scaling-invariant structure of the original system  with an extra perturbation term.

The rest of this paper is organized as follows. In Section 2, we introduce the local existence of classical solutions and recall some useful lemmas which are needed in the subsequent proof. In Section 3, based on an energy-dissipation relation, we prove the existence of global classical solutions in 2D  via standard energy method and the existence of weak solutions in 3D  by proper approximations. Then in Section 4, we prove the eventual regularity of the 3D weak solutions along the idea sketched above.

\section{Local Well-posedness and Some Preliminaries}
For each $\e\geq0$, we introduce the approximation problem of \eqref{chemo1} as follows.
\begin{equation}\label{chemo1a}
(AP)\quad\begin{cases}
\partial_t\rho_\e-\Delta \rho_\e=\nabla \cdot(\frac{\rho_\e(1-\e\rho_\e)}{c_\e} \nabla c_\e),\qquad &x\in\Omega,\;t>0\\
\partial_t c_\e-\Delta c_\e+c_\e=\rho_\e,\qquad &x\in\Omega,\;t>0\\
\partial_\nu\rho_\e=\partial_\nu c_\e=0,\qquad &x\in\partial\Omega,\;t>0\\
\rho_\e(x,0)=\rho_I(x),\;\; c_\e(x,0)=c_I(x),\qquad & x\in\Omega.
\end{cases}
\end{equation}
For the approximation problem, we have the following local existence result.	\begin{theorem}\label{locala}
		For any given $(\rho_I, c_I)$ satisfying \eqref{ini}, system \eqref{chemo1a} has a unique local classical solution
		\begin{equation}\non
		(\rho_\e,c_\e)\in C(\overline{\Omega}\times [0,t_\e^+);\mathbb{R}^2)\cap C^{2,1}(\overline{\Omega}\times (0,t_\e^+);\mathbb{R}^2)
		\end{equation}and $\rho_\e(x,t)\geq0, c_\e(x,t)>0$ for each $(x,t)\in \overline{\Omega}\times [0,t_\e^+)$, $t_\e^+$ denoting the maximal existence time. Moreover, $\|\rho_\e(t)\|_{L^1(\Omega)}=\|\rho_I\|_{L^1(\Omega)}$ and $\|c_\e(t)\|_{L^1(\Omega)}=e^{-t}\|c_I\|_{L^1(\Omega)}+(1-e^{-t})\|\rho_I\|_{L^1(\Omega)}.$
		
		If there is a function $\omega:(0,\infty)\rightarrow(0,\infty)$ such that  for each $T>0$,
		\begin{equation}\label{cri0}
		\|\rho_\e(t)\|_{L^\infty(\Omega)}\leq \omega(T), \qquad 0<t<\min\{T,t_\e^+\},
		\end{equation} then $t_\e^+=+\infty.$ In particular, if $\e\in(0,\zeta_0]$ with $1/{\zeta_0}=\|\rho_I\|_{L^\infty(\Omega)}$, then  $\rho_\e\leq1/{\e}$ and thus $t_\e^+=+\infty.$	
	\end{theorem}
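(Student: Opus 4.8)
The plan is the classical one for chemotaxis systems: a contraction mapping produces a local mild solution, parabolic regularity makes it classical, the maximum principle yields both the sign conditions and the pointwise ceiling $\rho_\e\le 1/\e$, integration over $\Omega$ gives the mass identities, and the extensibility criterion follows from a bootstrap argument. For the local theory, set $m_0:=\min_{\overline{\Omega}}c_I>0$ (finite and positive since $c_I\in C^1(\overline{\Omega})$, $c_I>0$), let $(e^{t\Delta})_{t\ge0}$ denote the Neumann heat semigroup, and for $R,T>0$ let $\mathcal{X}_T$ be the set of pairs $(\rho,c)\in C([0,T];C(\overline{\Omega}))\times C([0,T];C^1(\overline{\Omega}))$ with $(\rho,c)|_{t=0}=(\rho_I,c_I)$, $\|\rho-\rho_I\|+\|c-c_I\|\le R$ and $c\ge m_0/2$. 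Define $\Phi(\bar\rho,\bar c)=(\rho,c)$ by solving via Duhamel's formula the decoupled linear problems $\partial_t c-\Delta c+c=\bar\rho$ and $\partial_t\rho-\Delta\rho=\nabla\cdot\bigl(\tfrac{\bar\rho(1-\e\bar\rho)}{\bar c}\nabla\bar c\bigr)$ with Neumann conditions and data $(\rho_I,c_I)$. Using the smoothing estimates $\|\nabla e^{t\Delta}v\|_{L^\infty}\le Ct^{-1/2}\|v\|_{L^\infty}$ and $\|e^{t\Delta}\nabla\cdot f\|_{L^\infty}\le Ct^{-1/2}\|f\|_{L^\infty}$, the bound $e^{t\Delta}c_I\ge m_0$ from the maximum principle, and the compatibility $\partial_\nu c_I=0$, one checks that for $R$ fixed in terms of $\|\rho_I\|_{C^0},\|c_I\|_{C^1}$ and then $T=T(R,m_0,\e)$ small, $\Phi$ maps $\mathcal{X}_T$ into itself and is a contraction there; its fixed point is the local mild solution $(\rho_\e,c_\e)$, extended to a maximal interval $[0,t_\e^+)$ in the usual way. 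Uniqueness among classical solutions follows from a Gronwall estimate on the difference of two solutions, and a standard bootstrap (heat-semigroup smoothing, $L^p$-maximal regularity for the $c_\e$-equation, Schauder estimates for the non-divergence form of the $\rho_\e$-equation) promotes $(\rho_\e,c_\e)$ to $C(\overline{\Omega}\times[0,t_\e^+))\cap C^{2,1}(\overline{\Omega}\times(0,t_\e^+))$.

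For the sign conditions and mass identities: on $[0,T]$ one has $c_\e\ge m_0/2>0$ by construction of $\mathcal{X}_T$, so the $\rho_\e$-equation reads $\partial_t\rho_\e=\Delta\rho_\e+\nabla\cdot(b_\e\rho_\e)$ with $b_\e:=c_\e^{-1}(1-\e\rho_\e)\nabla c_\e$ a fixed smooth bounded field; since $\rho_\e(0)=\rho_I\ge0$, the zero function solves the same linear equation, and the Neumann condition holds, the parabolic maximum principle gives $\rho_\e\ge0$. Feeding this back into Duhamel's formula $c_\e(t)=e^{-t}e^{t\Delta}c_I+\int_0^t e^{-(t-s)}e^{(t-s)\Delta}\rho_\e(s)\,ds$ and using positivity of $e^{t\Delta}$ gives $c_\e(t)\ge e^{-t}m_0>0$; both properties propagate to $[0,t_\e^+)$ along the continuation. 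Integrating the $\rho_\e$-equation over $\Omega$ and using $\partial_\nu\rho_\e=\partial_\nu c_\e=0$ gives $\tfrac{d}{dt}\int_\Omega\rho_\e=0$, hence $\|\rho_\e(t)\|_{L^1}=\int_\Omega\rho_\e=\int_\Omega\rho_I=\|\rho_I\|_{L^1}$; integrating the $c_\e$-equation gives the linear ODE $\tfrac{d}{dt}\int_\Omega c_\e=-\int_\Omega c_\e+\|\rho_I\|_{L^1}$ with datum $\|c_I\|_{L^1}$, whose solution is the asserted formula, and $\|c_\e(t)\|_{L^1}=\int_\Omega c_\e$ by positivity.

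For the extensibility criterion, suppose $t_\e^+<\infty$ and $\|\rho_\e(t)\|_{L^\infty}\le M$ on $(0,t_\e^+)$. Then $c_\e\ge e^{-t_\e^+}m_0>0$ there, and applying the operator-norm bound $\|\nabla e^{t(\Delta-1)}\|_{L^\infty\to L^\infty}\le Ct^{-1/2}e^{-t}$ to Duhamel's formula bounds $\|\nabla c_\e\|_{L^\infty}$ uniformly; hence the flux $\tfrac{\rho_\e(1-\e\rho_\e)}{c_\e}\nabla c_\e$ is bounded in $L^\infty((0,t_\e^+)\times\Omega)$. The Hölder smoothing estimate $\|e^{\tau\Delta}\nabla\cdot f\|_{C^\alpha}\le C\tau^{-(1+\alpha)/2}\|f\|_{L^\infty}$, integrable in $\tau$ for $\alpha<1$, then yields a uniform $C^\alpha$ bound for $\rho_\e$ on $[\d,t_\e^+)$ for each $\d>0$; reinserting this into the $c_\e$-equation gives a uniform $C^{2+\alpha,1+\alpha/2}$ bound for $c_\e$, and Schauder theory for the non-divergence form of the $\rho_\e$-equation (now linear with $C^\alpha$ coefficients and right-hand side) gives the same for $\rho_\e$. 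Thus $(\rho_\e(t),c_\e(t))$ converges in $C(\overline{\Omega})\times C^1(\overline{\Omega})$ as $t\uparrow t_\e^+$, and restarting the local construction from $t_\e^+$ contradicts maximality. Finally, when $\e\in(0,\zeta_0]$, i.e.\ $1/\e\ge1/\zeta_0=\|\rho_I\|_{L^\infty}$, the algebraic design of $(AP)$ is the point: $\rho_\e(1-\e\rho_\e)=-\e\,\rho_\e(\rho_\e-\tfrac1\e)$, so $w:=\rho_\e-\tfrac1\e$ solves $\partial_t w-\Delta w=\nabla\cdot\bigl(-\e\,c_\e^{-1}\rho_\e\nabla c_\e\,w\bigr)$ with $\partial_\nu w=0$ and $w(0)=\rho_I-\tfrac1\e\le0$; the maximum principle gives $w\le0$, i.e.\ $\rho_\e\le1/\e$, so the criterion applies with $\omega\equiv1/\e$ and $t_\e^+=+\infty$.

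The one step with genuine technical content is the bootstrap in the extensibility criterion: from an $L^\infty$-bound on $\rho_\e$ alone one must produce $C^{2,1}$-estimates that are uniform on $[\d,t_\e^+)$, which requires keeping the pointwise lower bound on $c_\e$ under control along the iteration and handling the divergence structure of the chemotactic term (via the Hölder smoothing estimate above, or by passing to the non-divergence form once $c_\e$ is regular enough). Everything else — the contraction, the sign conditions, the mass identities, and the barrier argument for $\rho_\e\le1/\e$ — is routine once the solution is known to be classical.
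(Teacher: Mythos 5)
Your proposal is correct, but it is considerably more self-contained than what the paper actually does, and the extensibility argument runs along a different track. The paper's proof is essentially citation-based: local existence is delegated to \cite[Lemma 3.1]{BBTW15}, the sign conditions and the ceiling $\rho_\e\le 1/\e$ are quoted from \cite{CLM08}, and for extensibility the paper invokes the ready-made blow-up criterion $\limsup_{t\nearrow t_\e^+}\left(\|\rho_\e\|_{L^\infty(\Omega)}+\|c_\e\|_{W^{1,q}(\Omega)}\right)=\infty$ from \cite{BBTW15} and then only has to check, via the $L^p$--$L^q$ semigroup estimates of Lemma \ref{lmpq}, that a uniform $L^\infty$ bound on $\rho_\e$ forces $\|c_\e\|_{W^{1,q}(\Omega)}$ to stay bounded on finite time intervals --- three lines of Duhamel. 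You instead rebuild the whole package: a Banach fixed point in $C([0,T];C(\overline{\Omega}))\times C([0,T];C^1(\overline{\Omega}))$, maximum-principle arguments for $\rho_\e\ge0$, $c_\e>0$ and (via the barrier $w=\rho_\e-1/\e$ and the factorization $\rho_\e(1-\e\rho_\e)=-\e\rho_\e w$) for the ceiling, and a full Schauder bootstrap plus restart for extensibility. What your route buys is transparency --- in particular your barrier computation makes explicit why the volume-filling truncation caps $\rho_\e$ at $1/\e$, which the paper leaves entirely to \cite{CLM08} --- at the cost of relying on the boundary-sensitive H\"older smoothing estimate $\|e^{\tau\Delta}\nabla\cdot f\|_{C^\alpha}\le C\tau^{-(1+\alpha)/2}\|f\|_{L^\infty}$, which the paper's $W^{1,q}$-level argument avoids. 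Two points deserve one more sentence each in a polished write-up: the maximum-principle steps use coefficients such as $\nabla\cdot b_\e$ that are classical only for $t>0$, so the comparison should be run on $[\delta,T']$ and $\delta\searrow0$ taken using continuity of the solution at $t=0$; and the contraction argument should note that membership of the image in the set $\{c\ge m_0/2\}$ uses the smallness of the (possibly sign-changing) Duhamel integral of $\bar\rho$, not just positivity of $e^{t\Delta}c_I$. Neither is a gap, just a place where the routine needs to be said out loud.
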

\begin{proof}
	The local existence result can be obtained in the way as done in \cite[Lemma 3.1]{BBTW15} for a more general setting since the boundedness of the sensitivity term $\frac{\rho_\e(1-\e\rho_\e)}{c_\e}$ follows from the strictly positivity of $c_\e$ on $\overline{\Omega}\times(0,t_\e^+)$ given by Lemma \ref{unilow} below. The positivity of $\rho_\e,c_\e$ and the property $\rho_\e\leq1/\varepsilon$ when $\varepsilon\in(0,\zeta_0]$ are given in \cite{CLM08}. We omit the detail here.
	
	If $t_\e^+<\infty,$ due to criterion \cite[Eqn. (3.3)]{BBTW15}, for any $q>d$, there holds 
	\begin{equation}\label{cri1}
		\limsup\limits_{t\nearrow t_\e^+}\|\rho_\e(\cdot,t)\|_{L^\infty(\Omega)}+\|c_\e(\cdot,t)\|_{W^{1,q}(\Omega)}=\infty.
	\end{equation}However, if \eqref{cri0} holds, the standard theory for parabolic equations implies that  $\|c_\e\|_{L^\infty(\Omega)}<\infty$ and by Lemma \ref{lmpq},
	\begin{align}
		\|\nabla c_\e(t)\|_{L^q(\Omega)}\leq&\|\nabla e^{(\Delta-1)t} c_I\|_{L^q(\Omega)}+C\int_0^te^{-(\lambda_1+1)(t-s)}(1+(t-s)^{-\frac12})\|\rho(s)\|_{L^q(\Omega)}ds\non\\
		\leq&C\|c_I\|_{W^{1,\infty}(\Omega)}+C\int_0^T(1+(t-s)^{-\frac12})ds\non\\
		<&\infty
	\end{align}which by \eqref{cri1} implies that $t_\e^+=\infty.$ This completes the proof.
\end{proof}
Since $c_I$ is strictly positive, one has a uniform-in-time (and also uniform-in-$\e$) strictly positive lower bound for $c_\e$ due to the following lemma (see, e.g., \cite[Lemma 3.1]{Fujie}).
\begin{lemma}\label{unilow}
	There is $c_*>0$ depending only on $\inf\limits_{x\in\Omega}c_I$, $\Omega$ and $\|\rho_I\|_{L^1(\Omega)}$ such that
	\begin{equation}
		\inf\limits_{x\in\Omega} c_\e(x,t)\geq c_*>0.
	\end{equation}
\end{lemma}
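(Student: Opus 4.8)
Since the second equation of the approximate system $(AP)$ is linear in $c_\e$ with nonnegative forcing $\rho_\e$, the plan is to read off a lower bound directly from the Duhamel formula, using only the conserved total mass $m:=\|\rho_I\|_{L^1(\Omega)}$ of $\rho_\e$. Denoting by $(e^{t\Delta})_{t\ge0}$ the Neumann heat semigroup on $\Omega$, the boundary condition $\pa_\nu c_\e=0$ gives
\[
c_\e(\cdot,t)=e^{-t}e^{t\Delta}c_I+\int_0^t e^{-(t-s)}e^{(t-s)\Delta}\rho_\e(\cdot,s)\,ds ,
\]
and both terms are pointwise nonnegative because $c_I>0$, $\rho_\e\ge0$ on the whole existence interval (Theorem \ref{locala}) and $e^{\sigma\Delta}$ preserves positivity.

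The one genuinely nontrivial ingredient is a quantitative lower bound for the Neumann heat kernel $G(x,y,\sigma)$ at large times: there is $\sigma_0=\sigma_0(\Omega)>0$ with
\[
G(x,y,\sigma)\ge\frac{1}{2|\Omega|}\qquad\text{for all }x,y\in\Omega,\ \sigma\ge\sigma_0,
\]
equivalently $e^{\sigma\Delta}f\ge\frac{1}{2|\Omega|}\int_\Omega f\,dx$ whenever $f\ge0$ and $\sigma\ge\sigma_0$. I would obtain this from the spectral decomposition $G(x,y,\sigma)=\frac{1}{|\Omega|}+\sum_{k\ge1}e^{-\lambda_k\sigma}\phi_k(x)\phi_k(y)$ together with parabolic smoothing (Cauchy--Schwarz applied via the semigroup property, plus $\|G(\cdot,\cdot,2)\|_{L^\infty(\Omega\times\Omega)}<\infty$, show the remainder is $O(e^{-\lambda_1\sigma})$); this is precisely the computation behind \cite[Lemma 3.1]{Fujie}, which I would just cite.

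Granting this, I would split into two time regimes. For $0\le t\le\sigma_0+1$, keep only the first Duhamel term and use the maximum principle for the Neumann heat equation against the constant subsolution $\inf_\Omega c_I$, i.e.\ $e^{t\Delta}c_I\ge\inf_\Omega c_I$, to get $c_\e(x,t)\ge e^{-(\sigma_0+1)}\inf_\Omega c_I$. For $t\ge\sigma_0+1$, keep only the part of the integral with $s\in[t-\sigma_0-1,\,t-\sigma_0]$; there $t-s\in[\sigma_0,\sigma_0+1]$, so $e^{-(t-s)}\ge e^{-(\sigma_0+1)}$ and, by the kernel bound, $e^{(t-s)\Delta}\rho_\e(\cdot,s)\ge\frac{1}{2|\Omega|}\|\rho_\e(s)\|_{L^1(\Omega)}=\frac{m}{2|\Omega|}$ using mass conservation from Theorem \ref{locala}; integrating in $s$ over this unit-length interval gives $c_\e(x,t)\ge e^{-(\sigma_0+1)}\frac{m}{2|\Omega|}$. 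Taking
\[
c_*:=e^{-(\sigma_0+1)}\min\Big\{\inf_\Omega c_I,\ \tfrac{m}{2|\Omega|}\Big\}
\]
closes the argument; $c_*$ depends only on $\inf_\Omega c_I$, on $\Omega$ (through $\sigma_0$ and $|\Omega|$) and on $\|\rho_I\|_{L^1(\Omega)}$, and the whole estimate is visibly uniform in $\e\ge0$. I do not foresee any serious obstacle beyond the heat-kernel lower bound above, and even that is classical.
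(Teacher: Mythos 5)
Your argument is correct and is essentially the standard proof behind the result the paper invokes: the paper gives no proof of this lemma at all, simply citing Fujie--Senba, and your Duhamel decomposition with the large-time lower bound $e^{\sigma\Delta}f\ge\frac{1}{2|\Omega|}\int_\Omega f\,dx$ for nonnegative $f$, combined with mass conservation and the maximum principle on the initial layer, is exactly the mechanism of that cited lemma. The resulting constant $c_*$ has the stated dependence and is visibly uniform in $\e$ and in $t\in[0,t_\e^+)$, so nothing further is needed.
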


Next, we recall some lemmas of analysis which will be used in the sequel. The first one is the following result of analysis given in  \cite[Lemma 3.3]{WinCPDE}.
\begin{lemma}
	\label{lm1}
	Let $h\in C^1(0,\infty)$ be positive and let $\Theta(s):=\int_1^s\frac{d\sigma}{h(\sigma)}$ for $s>0$. Assume that $\Omega$ is a smooth bounded domain in $\mathbb{R}^d$ with $d\geq1$. Then for any positive function $\varphi\in C^2(\overline{\Omega})$ fulfilling $\frac{\partial\varphi}{\partial\nu}=0$ on $\partial\Omega$, there holds
	\begin{equation}
	\int_\Omega \frac{h'(\varphi)}{h^3(\varphi)}|\nabla\varphi|^4 dx
	\leq (2+\sqrt{d})^2\int_\Omega \frac{h(\varphi)}{h'(\varphi)}|\nabla^2\Theta(\varphi)|^2 dx.\nonumber
	\end{equation}
\end{lemma}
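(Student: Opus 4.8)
The plan is to pass to the new variable $\psi:=\Theta(\varphi)$, which turns the claim into a comparison between a weighted $L^4$–integral of $\nabla\psi$ and a weighted $L^2$–integral of $\nabla^2\psi$ that closes after a single integration by parts and the Cauchy--Schwarz inequality. Abbreviate $L:=\int_\Omega\frac{h'(\varphi)}{h^3(\varphi)}|\nabla\varphi|^4\,dx$ and $R:=\int_\Omega\frac{h(\varphi)}{h'(\varphi)}|\nabla^2\Theta(\varphi)|^2\,dx$, so the assertion is $L\le(2+\sqrt d)^2R$. Since $h\in C^1(0,\infty)$ is positive, $\Theta\in C^2(0,\infty)$ with $\Theta'=1/h$, hence $\psi\in C^2(\overline\Omega)$ and $\partial_\nu\psi=\Theta'(\varphi)\partial_\nu\varphi=0$ on $\partial\Omega$; moreover $\nabla\varphi=h(\varphi)\nabla\psi$, so $L=\int_\Omega h'(\varphi)h(\varphi)|\nabla\psi|^4\,dx$ and $R=\int_\Omega\frac{h(\varphi)}{h'(\varphi)}|\nabla^2\psi|^2\,dx$. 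The identity I would exploit is
\begin{equation}
\nabla\!\big(h(\varphi)\big)=h'(\varphi)\nabla\varphi=h'(\varphi)h(\varphi)\nabla\psi ,\qquad\text{so that}\qquad h'(\varphi)h(\varphi)|\nabla\psi|^4=\nabla\!\big(h(\varphi)\big)\cdot\nabla\psi\,|\nabla\psi|^2 .
\end{equation}

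Next I would integrate by parts; the boundary term vanishes because $\partial_\nu\psi=0$, giving
\begin{equation}
L=\int_\Omega\nabla\!\big(h(\varphi)\big)\cdot\big(|\nabla\psi|^2\nabla\psi\big)\,dx=-\int_\Omega h(\varphi)\,\nabla\!\cdot\!\big(|\nabla\psi|^2\nabla\psi\big)\,dx .
\end{equation}
Expanding $\nabla\!\cdot\!\big(|\nabla\psi|^2\nabla\psi\big)=|\nabla\psi|^2\Delta\psi+2\,\nabla\psi^{\mathsf{T}}\,\nabla^2\psi\,\nabla\psi$, then using the elementary pointwise bounds $|\Delta\psi|\le\sqrt d\,|\nabla^2\psi|$ (Cauchy--Schwarz applied to $\Delta\psi=\mathrm{tr}\,\nabla^2\psi$) and $|\nabla\psi^{\mathsf{T}}\nabla^2\psi\,\nabla\psi|\le|\nabla^2\psi|\,|\nabla\psi|^2$, and recalling $h(\varphi)>0$, I obtain $L\le(2+\sqrt d)\int_\Omega h(\varphi)\,|\nabla^2\psi|\,|\nabla\psi|^2\,dx$.

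To finish I would split the last integrand as $\big(\sqrt{h(\varphi)/h'(\varphi)}\,|\nabla^2\psi|\big)\cdot\big(\sqrt{h'(\varphi)h(\varphi)}\,|\nabla\psi|^2\big)$ --- legitimate since in the relevant applications $h'>0$ on the range of $\varphi$ (e.g.\ $h(s)=s$, which produces the term $c\,|\nabla^2\log c|^2$ appearing in \eqref{dissipation}) --- and apply Cauchy--Schwarz, recognising the two resulting factors as $R^{1/2}$ and $L^{1/2}$; this yields $L\le(2+\sqrt d)\,R^{1/2}L^{1/2}$. As $\varphi\in C^2(\overline\Omega)$ forces $L<\infty$, the inequality is trivial when $L=0$ and otherwise follows on dividing by $L^{1/2}$ and squaring. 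The only genuinely delicate point is the first move: realising that $\Theta$ is exactly the substitution for which $\nabla(h(\varphi))$ is a scalar multiple of $\nabla\psi$ with no $|\nabla\psi|$–dependence, and then choosing the Cauchy--Schwarz weights so the two integrals produced are precisely $L$ and $R$; everything after that is the routine matrix estimate $\|\nabla^2\psi\|_{\mathrm{op}}\le\|\nabla^2\psi\|_{\mathrm{HS}}=|\nabla^2\psi|$ together with $|\mathrm{tr}\,\nabla^2\psi|\le\sqrt d\,\|\nabla^2\psi\|_{\mathrm{HS}}$.
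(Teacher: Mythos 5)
Your argument is correct and is essentially the standard proof of this inequality (the paper itself does not prove the lemma but cites \cite{WinCPDE}, whose proof follows the same route): substitute $\psi=\Theta(\varphi)$, integrate by parts once using $\partial_\nu\psi=0$, apply the pointwise bounds $|\Delta\psi|\le\sqrt{d}\,|\nabla^2\psi|$ and $|\nabla\psi^{\mathsf{T}}\nabla^2\psi\,\nabla\psi|\le|\nabla^2\psi|\,|\nabla\psi|^2$, and close with Cauchy--Schwarz to absorb $L^{1/2}$, which yields exactly the constant $(2+\sqrt{d})^2$. Your caveat about needing $h'>0$ is well taken but is not a gap in your proof: the statement itself is only meaningful under that tacit hypothesis (the right-hand side contains $h/h'$), and it holds in the paper's application $h(s)=s$.
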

Then we recall the following two results given in \cite[Lemma 2.3 \& Lemma 2.4]{J19}.
\begin{lemma}\label{lmanalysis2}
	For any positive function $\varphi\in C^2(\overline{\Omega})$, there holds
	\beq\label{n2}
	2\frac{\Delta \sqrt{\varphi}}{\sqrt{\varphi}}=\frac{\Delta \varphi}{\varphi}-\frac{|\nabla \varphi|^2}{2\varphi^2},\quad 2\varphi\nabla (\frac{\Delta\sqrt{\varphi}}{\sqrt{\varphi}})=\nabla \cdot (\varphi \nabla^2\log \varphi)
	\eeq 	
	and moreover, if $\frac{\partial\varphi}{\partial\nu}=0$ on $\partial\Omega$, we have
	\begin{equation}\label{n9}
	\int_\Omega \left[-\nabla \varphi\cdot\nabla \left(\frac{\Delta \varphi}{\varphi}\right)-\frac12\int_\Omega |\nabla \log\varphi|^2\Delta \varphi\right]dx=-\frac12\int_{\partial\Omega}\frac1\varphi \frac{\partial}{\partial\nu}|\nabla\varphi|^2ds+\int_\Omega\varphi|\nabla^2\log\varphi|^2dx.
	\end{equation}
\end{lemma}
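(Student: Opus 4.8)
The lemma collects three elementary identities, and I would prove them in this order. \emph{First identity.} Setting $w=\sqrt{\varphi}>0$ one has $\nabla w=\tfrac12\varphi^{-1/2}\nabla\varphi$, hence $\Delta w=\nabla\cdot\!\big(\tfrac12\varphi^{-1/2}\nabla\varphi\big)=\tfrac12\varphi^{-1/2}\Delta\varphi-\tfrac14\varphi^{-3/2}|\nabla\varphi|^2$; dividing by $w=\varphi^{1/2}$ and multiplying by $2$ gives $2\,\Delta\sqrt{\varphi}/\sqrt{\varphi}=\Delta\varphi/\varphi-|\nabla\varphi|^2/(2\varphi^2)$, equivalently $2\,\Delta\sqrt\varphi/\sqrt\varphi=\Delta\varphi/\varphi-\tfrac12|\nabla\log\varphi|^2$. \emph{Second identity.} I would verify this by a direct computation in Cartesian coordinates using $(\nabla^2\log\varphi)_{ij}=\varphi^{-1}\partial_{ij}\varphi-\varphi^{-2}\partial_i\varphi\,\partial_j\varphi$: the $i$-th component of $\nabla\cdot(\varphi\nabla^2\log\varphi)$ is $\sum_j\partial_j\big(\partial_{ij}\varphi-\varphi^{-1}\partial_i\varphi\,\partial_j\varphi\big)$, and, using $\sum_j\partial_{ij}\varphi\,\partial_j\varphi=\tfrac12\partial_i|\nabla\varphi|^2$, it reduces to $\partial_i(\Delta\varphi)-\varphi^{-1}\Delta\varphi\,\partial_i\varphi-\tfrac12\varphi^{-1}\partial_i|\nabla\varphi|^2+\varphi^{-2}|\nabla\varphi|^2\partial_i\varphi$; on the other hand, inserting the first identity into $2\varphi\,\nabla\big(\Delta\sqrt\varphi/\sqrt\varphi\big)$ and differentiating produces exactly the same vector, which proves the second identity.

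For the integral identity the plan is to test $\nabla\cdot(\varphi\nabla^2\log\varphi)$ against $\nabla\log\varphi$. The matrix integration-by-parts formula
\[\int_\Omega\big(\nabla\cdot(\varphi\nabla^2\log\varphi)\big)\cdot\nabla\log\varphi\,dx=\int_{\partial\Omega}\big(\varphi(\nabla^2\log\varphi)\nu\big)\cdot\nabla\log\varphi\,ds-\int_\Omega\varphi|\nabla^2\log\varphi|^2\,dx\]
follows by integrating $\sum_{i,j}\partial_j\big(\varphi(\nabla^2\log\varphi)_{ij}\big)\,\partial_i\log\varphi$ by parts in $x_j$. By the second identity together with the first identity in the form $2\,\Delta\sqrt\varphi/\sqrt\varphi=\Delta\varphi/\varphi-\tfrac12|\nabla\log\varphi|^2$, the integrand on the left equals $2\nabla\varphi\cdot\nabla\big(\Delta\sqrt\varphi/\sqrt\varphi\big)=\nabla\varphi\cdot\nabla(\Delta\varphi/\varphi)-\tfrac12\nabla\varphi\cdot\nabla|\nabla\log\varphi|^2$, and integrating the last term once more by parts while dropping its boundary contribution (here the hypothesis $\partial_\nu\varphi=0$ is used) turns $-\tfrac12\int_\Omega\nabla\varphi\cdot\nabla|\nabla\log\varphi|^2\,dx$ into $+\tfrac12\int_\Omega|\nabla\log\varphi|^2\Delta\varphi\,dx$. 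Combining these gives
\[\int_\Omega\Big[-\nabla\varphi\cdot\nabla\Big(\frac{\Delta\varphi}{\varphi}\Big)-\frac12|\nabla\log\varphi|^2\Delta\varphi\Big]dx=\int_\Omega\varphi|\nabla^2\log\varphi|^2\,dx-\int_{\partial\Omega}\big(\varphi(\nabla^2\log\varphi)\nu\big)\cdot\nabla\log\varphi\,ds,\]
which is the asserted relation (the superfluous inner $\int_\Omega$ in the stated left-hand integrand is a typo) once the boundary density is evaluated. Writing $\big(\varphi(\nabla^2\log\varphi)\nu\big)\cdot\nabla\log\varphi=\varphi^{-1}\sum_j\big(\sum_i\partial_{ij}\varphi\,\partial_i\varphi\big)\nu_j-\varphi^{-2}|\nabla\varphi|^2\,\partial_\nu\varphi$ and using $\partial_\nu\varphi=0$ on $\partial\Omega$ to kill the second term and $\sum_i\partial_{ij}\varphi\,\partial_i\varphi=\tfrac12\partial_j|\nabla\varphi|^2$ in the first, this density equals $\tfrac1{2\varphi}\partial_\nu|\nabla\varphi|^2$, producing the boundary term $-\tfrac12\int_{\partial\Omega}\varphi^{-1}\partial_\nu|\nabla\varphi|^2\,ds$ and finishing the proof.

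All three steps are routine computations; the only place that asks for genuine care is the last one, namely the evaluation of the surface integral, where one must invoke $\partial_\nu\varphi=0$ to discard one of the two terms in $\varphi\nabla^2\log\varphi$ and recognize $\sum_i\partial_{ij}\varphi\,\partial_i\varphi=\tfrac12\partial_j|\nabla\varphi|^2$, so that the surviving term collapses to a pure normal derivative of $|\nabla\varphi|^2$. It is also worth double-checking the signs generated by the two integrations by parts (the matrix one and the scalar one), since a slip there would change the constant in front of the boundary integral.
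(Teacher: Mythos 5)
Your proof is correct in all three steps: the computation of $\Delta\sqrt{\varphi}$, the componentwise verification of $2\varphi\nabla(\Delta\sqrt{\varphi}/\sqrt{\varphi})=\nabla\cdot(\varphi\nabla^2\log\varphi)$, and the integration by parts against $\nabla\log\varphi$ with the boundary density collapsing to $\tfrac{1}{2\varphi}\partial_\nu|\nabla\varphi|^2$ thanks to $\partial_\nu\varphi=0$; you also rightly flag the spurious inner $\int_\Omega$ in the statement as a typo. The paper itself gives no proof of this lemma --- it is quoted from \cite[Lemma 2.3 \& Lemma 2.4]{J19} --- and your argument is exactly the standard computation that reference carries out, so there is nothing to add.
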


\begin{lemma}\label{lm6} For any positive function $\varphi\in C^2(\overline{\Omega})$ with $\Omega\subset\mathbb{R}^d$ fulfilling $\frac{\partial\varphi}{\partial\nu}=0$ on $\partial \Omega$, there is a positive constant $C$ depending only on $d$ such that
	\begin{equation}\nonumber
	\int_\Omega \left(\frac{|\Delta \varphi|^2}{\varphi}+|\Delta\sqrt{\varphi}|^2+\frac{|\nabla \varphi|^4}{\varphi^3}\right)dx\leq C\int_\Omega\varphi|\nabla^2\log\varphi|^2dx.
	\end{equation}
\end{lemma}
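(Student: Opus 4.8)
The plan is to reduce all three terms on the left-hand side to the single density $\varphi|\nabla^2\log\varphi|^2$, using the quartic term $|\nabla\varphi|^4/\varphi^3$ as an intermediary that is itself controlled by the right-hand side. The key initial observation is that this quartic term follows at once from Lemma \ref{lm1} upon the choice $h(\sigma)=\sigma$. Indeed, then $h'\equiv1$, $\Theta(s)=\int_1^s\sigma^{-1}\,d\sigma=\log s$, so $\nabla^2\Theta(\varphi)=\nabla^2\log\varphi$ and $h(\varphi)/h'(\varphi)=\varphi$, while $h'(\varphi)/h^3(\varphi)=\varphi^{-3}$. Thus Lemma \ref{lm1} reads $\int_\Omega|\nabla\varphi|^4\varphi^{-3}\,dx\le(2+\sqrt d)^2\int_\Omega\varphi|\nabla^2\log\varphi|^2\,dx$, which disposes of the third term and furnishes a bound I will reuse for the other two.

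Next I would treat $\int_\Omega|\Delta\varphi|^2/\varphi\,dx$. Writing $\nabla^2\log\varphi=\varphi^{-1}\nabla^2\varphi-\varphi^{-2}\nabla\varphi\otimes\nabla\varphi$ and taking the trace gives the pointwise identity $\Delta\varphi/\varphi=\Delta\log\varphi+|\nabla\varphi|^2/\varphi^2$. Squaring, multiplying by $\varphi$, and applying $(a+b)^2\le2a^2+2b^2$ yields $|\Delta\varphi|^2/\varphi\le2\varphi(\Delta\log\varphi)^2+2|\nabla\varphi|^4/\varphi^3$. Since $\Delta\log\varphi=\mathrm{tr}(\nabla^2\log\varphi)$ and $(\mathrm{tr}\,M)^2\le d|M|^2$ for any symmetric $d\times d$ matrix $M$ (Cauchy--Schwarz on its eigenvalues), the first summand satisfies $\varphi(\Delta\log\varphi)^2\le d\,\varphi|\nabla^2\log\varphi|^2$. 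Integrating and invoking the quartic bound from the first step closes this term with a constant depending only on $d$.

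Finally, the term $\int_\Omega|\Delta\sqrt\varphi|^2\,dx$ reduces to the previous two via the pointwise identity \eqref{n2} of Lemma \ref{lmanalysis2}, which gives $\Delta\sqrt\varphi=\tfrac{\sqrt\varphi}{2}\bigl(\Delta\varphi/\varphi-|\nabla\varphi|^2/(2\varphi^2)\bigr)$. Squaring and using $(a-b)^2\le2a^2+2b^2$ bounds $|\Delta\sqrt\varphi|^2$ by a multiple of $|\Delta\varphi|^2/\varphi$ plus a multiple of $|\nabla\varphi|^4/\varphi^3$, both already controlled. Collecting the three estimates produces the asserted constant $C=C(d)$.

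The argument is essentially linear once the right tools are lined up, so I do not anticipate a serious analytic obstacle; the only non-routine points are the two recognitions that drive it. The first is that the specialization $h(\sigma)=\sigma$ turns the conclusion of Lemma \ref{lm1} into exactly the target density $\varphi|\nabla^2\log\varphi|^2$, so that the seemingly hardest term is in fact immediate. The second is that the scalar $\Delta\log\varphi$ is dominated by the full Hessian $\nabla^2\log\varphi$ through the elementary trace inequality $(\mathrm{tr}\,M)^2\le d|M|^2$, which is what lets the Laplacian-type terms be absorbed into the right-hand side. The remaining care is merely bookkeeping to ensure every constant depends on $d$ alone, which each step manifestly respects.
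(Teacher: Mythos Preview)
Your argument is correct. The paper does not actually supply a proof of this lemma; it is merely recalled from \cite[Lemma~2.4]{J19}, so there is no in-paper proof to compare against. Your approach --- first extracting the quartic bound from Lemma~\ref{lm1} with $h(\sigma)=\sigma$, then reducing $|\Delta\varphi|^2/\varphi$ via the pointwise identity $\Delta\varphi/\varphi=\Delta\log\varphi+|\nabla\varphi|^2/\varphi^2$ together with $(\mathrm{tr}\,M)^2\le d|M|^2$, and finally handling $|\Delta\sqrt\varphi|^2$ through \eqref{n2} --- is clean and each step depends only on $d$. This is essentially the natural route and matches the spirit of the argument in \cite{J19}.
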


The following result indicates that a lower-order perturbation to a sectorial operator is still a sectorial operator \cite{Zheng04}.
\begin{lemma}\label{pertur1}
	Suppose that $A$ is a sectorial operator and $B$ is a linear operator with $D(A)\subset D(B)$ such that for any $x\in D(A)$, there holds
	\begin{equation}\nonumber
	\|Bx\|\leq \varepsilon \|Ax\|+K_\varepsilon \|x\|
	\end{equation}where $\varepsilon>0$ is an arbitrary small constant and $K_\varepsilon$ is a positive constant depending on $\varepsilon.$ Then $A+B$ is sectorial.
\end{lemma}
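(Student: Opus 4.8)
The statement is the classical fact that sectoriality is stable under relatively bounded perturbations with relative bound strictly less than one (indeed here with arbitrarily small relative bound, which makes things easier). Since the full proof is standard and appears in \cite{Zheng04}, I would only sketch it. The plan is to go through the two defining properties of a sectorial operator: first, that $A+B$ is closed (or at least closable with the right domain) and densely defined with a resolvent set containing a sector, and second, that the resolvent decays like $|\lambda|^{-1}$ on that sector. Throughout, write $R(\lambda,A)=(\lambda I-A)^{-1}$.

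First I would recall that since $A$ is sectorial, there are constants $a\in\mathbb{R}$, $\theta\in(\pi/2,\pi)$ and $M\geq1$ such that the sector $\Sigma=\{\lambda:|\arg(\lambda-a)|\leq\theta\}\cup\{\lambda:|\lambda-a|\leq r\}$ lies in $\rho(A)$ and $\|R(\lambda,A)\|\leq M/|\lambda-a|$ there. The next step is the key algebraic identity: for $\lambda\in\rho(A)$ one writes
\begin{equation}\nonumber
\lambda I-(A+B)=\big(I-BR(\lambda,A)\big)(\lambda I-A),
\end{equation}
so that $\lambda\in\rho(A+B)$ provided $\|BR(\lambda,A)\|<1$, with $R(\lambda,A+B)=R(\lambda,A)\big(I-BR(\lambda,A)\big)^{-1}$. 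The heart of the matter is therefore to estimate $\|BR(\lambda,A)\|$. For $x$ in the underlying space, $R(\lambda,A)x\in D(A)\subset D(B)$, so the hypothesis applies with $\varepsilon$ to be chosen:
\begin{equation}\nonumber
\|BR(\lambda,A)x\|\leq \varepsilon\|AR(\lambda,A)x\|+K_\varepsilon\|R(\lambda,A)x\|.
\end{equation}
Now $AR(\lambda,A)=\lambda R(\lambda,A)-I$, so $\|AR(\lambda,A)x\|\leq (1+|\lambda|M/|\lambda-a|)\|x\|$, which is bounded by some constant $M'$ uniformly for $\lambda\in\Sigma$; and $\|R(\lambda,A)x\|\leq (M/|\lambda-a|)\|x\|$. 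Hence $\|BR(\lambda,A)\|\leq \varepsilon M'+K_\varepsilon M/|\lambda-a|$. Choosing $\varepsilon$ so small that $\varepsilon M'\leq 1/4$, and then restricting to $|\lambda-a|\geq 4K_\varepsilon M \eqqcolon \beta$ (pushing the vertex of the sector to the right by replacing $a$ with $a+\beta$ if necessary), we get $\|BR(\lambda,A)\|\leq 1/2$ on the shifted sector. By the Neumann series, $\big(I-BR(\lambda,A)\big)^{-1}$ exists with norm $\leq 2$, so the shifted sector lies in $\rho(A+B)$ and
\begin{equation}\nonumber
\|R(\lambda,A+B)\|\leq \|R(\lambda,A)\|\cdot\big\|\big(I-BR(\lambda,A)\big)^{-1}\big\|\leq \frac{2M}{|\lambda-a|}.
\end{equation}
Finally I would note that $A+B$ is densely defined (its domain is $D(A)$, which is dense since $A$ is sectorial) and closed: $A$ is closed, $B$ is $A$-bounded with relative bound less than one, so a standard perturbation lemma gives that $A+B$ is closed on $D(A)$. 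Together with the resolvent bound just obtained on a sector of half-angle $>\pi/2$, this is exactly the definition of sectoriality for $A+B$.

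The only genuinely delicate point is bookkeeping the sector: the perturbation forces one to move the vertex to the right by a finite amount $\beta$, and one must check that the resolvent estimate $\|R(\lambda,A)\|\le M/|\lambda-a|$ and the bound $\|AR(\lambda,A)\|\le M'$ indeed hold uniformly on the truncated sector $\{|\lambda-a|\ge\beta,\ |\arg(\lambda-a)|\le\theta\}$ — which they do, since on that region $|\lambda|/|\lambda-a|$ is bounded. No further subtlety arises because the relative bound $\varepsilon$ can be taken as small as we like, so there is no borderline case to worry about. I would simply refer the reader to \cite{Zheng04} for the routine details.
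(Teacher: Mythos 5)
Your proof is correct and is the standard relative-boundedness perturbation argument; the paper itself gives no proof of this lemma, merely citing \cite{Zheng04}, and your factorization $\lambda I-(A+B)=\bigl(I-BR(\lambda,A)\bigr)(\lambda I-A)$ together with the Neumann-series and vertex-shifting bookkeeping is exactly what that reference contains. Nothing further is needed.
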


The next lemma presents an estimate for frequently used integrals throughout this paper, the proof of which can be found in \cite{Win10,J18b}.
\begin{lemma}\label{lmint}
	Suppose $0<\alpha<1$, $0<\beta<1$, $\gamma>0$, $\delta>0$ and $\gamma\neq\delta$. Then there holds
	\begin{equation}\nonumber
	\int_0^t(1+(t-s)^{-\alpha})e^{-\gamma(t-s)}(1+s^{-\beta})e^{-\delta s}ds\leq C(\alpha,\beta,\delta,\gamma)(1+t^{\min\{0,1-\alpha-\beta\}})e^{-\min\{\gamma,\delta\}t}
	\end{equation}for all $t>0$,  where $C(\alpha,\beta,\delta,\gamma)=C\cdot(\frac{1}{|\delta-\gamma|}+\frac{1}{1-\alpha}+\frac{1}{1-\beta})$ with  $C>0$ being a generic constant when $0<t\leq 1$ or when $t>1$ and $\alpha+\beta\geq1$, while when $t>1$ and $\alpha+\beta<1$, the constant $C$ may also depend on $\left(\frac{2(1-\alpha-\beta)}{\alpha(\delta-\gamma)}\right)^{\frac{1-\alpha-\beta}{\alpha}}$.
\end{lemma}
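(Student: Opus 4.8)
The plan is to prove the estimate by an elementary but careful decomposition of the convolution, the two main points being to preserve the \emph{full} exponential rate $\min\{\gamma,\delta\}$ and to isolate the single genuinely delicate contribution. First I would reduce to the case $\gamma\le\delta$: the change of variables $s\mapsto t-s$ rewrites the integral into the same form with the pairs $(\alpha,\gamma)$ and $(\beta,\delta)$ interchanged, and since the asserted right-hand side is invariant under this interchange, no generality is lost. Writing $m:=\min\{\gamma,\delta\}=\gamma$ and $\eta:=\delta-\gamma>0$, I factor $e^{-\gamma(t-s)}e^{-\delta s}=e^{-\gamma t}e^{-\eta s}$ and reduce the claim to
\begin{equation}\nonumber
K(t):=\int_0^t\bigl(1+(t-s)^{-\alpha}\bigr)\bigl(1+s^{-\beta}\bigr)e^{-\eta s}\,ds\le C\bigl(1+t^{\min\{0,1-\alpha-\beta\}}\bigr),
\end{equation}
with $C$ of the stated form. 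I emphasize that this reflection step, rather than a naive split of the whole integrand at $t/2$, is exactly what avoids losing a factor of two in the exponent and delivers the sharp rate $e^{-mt}$.

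Next I would expand the product into four pieces and treat them separately. The three ``easy'' pieces are uniformly controlled: $\int_0^t e^{-\eta s}\,ds\le\eta^{-1}$; $\int_0^t s^{-\beta}e^{-\eta s}\,ds\le\int_0^\infty s^{-\beta}e^{-\eta s}\,ds=\Gamma(1-\beta)\eta^{\beta-1}\le\frac{1}{1-\beta}\eta^{\beta-1}$, using $\Gamma(2-\beta)\le1$ on $(1,2)$; and $\int_0^t(t-s)^{-\alpha}e^{-\eta s}\,ds$, which is at most $\frac{t^{1-\alpha}}{1-\alpha}\le\frac{1}{1-\alpha}$ when $t\le1$ and, after splitting $[0,t]=[0,t/2]\cup[t/2,t]$, at most $2^\alpha\eta^{-1}+\frac{C}{1-\alpha}$ when $t>1$. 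Each of these is consistent with the target bound. The crux is the doubly singular term
\begin{equation}\nonumber
L(t):=\int_0^t(t-s)^{-\alpha}s^{-\beta}e^{-\eta s}\,ds,
\end{equation}
whose behavior depends on the sign of $1-\alpha-\beta$.

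For $L(t)$ I would distinguish the regimes. When $t\le1$, I drop $e^{-\eta s}\le1$ and rescale $s=t\sigma$ to get $L(t)\le t^{1-\alpha-\beta}B(1-\beta,1-\alpha)$; bounding the Beta integral by splitting $[0,1]$ at $\tfrac12$ gives $B(1-\beta,1-\alpha)\le C\bigl(\frac{1}{1-\alpha}+\frac{1}{1-\beta}\bigr)$, and since $t\le1$ one has $t^{1-\alpha-\beta}\le 1+t^{\min\{0,1-\alpha-\beta\}}$, which reproduces exactly the singular-at-zero factor present when $\alpha+\beta>1$. When $t>1$, I split $L$ at $t/2$: on $[0,t/2]$ I use $(t-s)^{-\alpha}\le2^\alpha t^{-\alpha}$ together with $\int_0^\infty s^{-\beta}e^{-\eta s}\,ds=\Gamma(1-\beta)\eta^{\beta-1}$, and on $[t/2,t]$ I use $s^{-\beta}\le(t/2)^{-\beta}$ and $e^{-\eta s}\le e^{-\eta t/2}$ to obtain the bound $\frac{1}{1-\alpha}(t/2)^{1-\alpha-\beta}e^{-\eta t/2}$. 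If $\alpha+\beta\ge1$ the exponent $1-\alpha-\beta\le0$ makes this piece bounded by a generic constant for $t>1$, matching $t^{\min\{0,1-\alpha-\beta\}}\le1$. If instead $\alpha+\beta<1$, the factor $(t/2)^{1-\alpha-\beta}$ grows, and here I dominate it by the exponential weight through the elementary optimization $\sup_{x>0}x^{p}e^{-\eta x}=(p/(e\eta))^{p}$ (equivalently, by optimizing the location of the split point), which is precisely the mechanism that produces the explicit $\eta$-dependent constant of the stated form $\bigl(\tfrac{2(1-\alpha-\beta)}{\alpha(\delta-\gamma)}\bigr)^{(1-\alpha-\beta)/\alpha}$.

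The main obstacle is this last regime $t>1$, $\alpha+\beta<1$: the pure power convolution $\int_0^t(t-s)^{-\alpha}s^{-\beta}\,ds=t^{1-\alpha-\beta}B(1-\beta,1-\alpha)$ genuinely grows in $t$, so a $t$-uniform bound can only come from trading this growth against the decay $e^{-\eta s}$, and quantifying this trade-off is exactly what forces an explicit constant blowing up as $\delta-\gamma\to0$. Once $L(t)$ is controlled in all four regimes, I collect the four contributions, re-insert the factor $e^{-\gamma t}=e^{-\min\{\gamma,\delta\}t}$, and verify that the accumulated constant has the advertised structure $C\bigl(\frac{1}{|\delta-\gamma|}+\frac{1}{1-\alpha}+\frac{1}{1-\beta}\bigr)$, with the extra factor entering only when $t>1$ and $\alpha+\beta<1$. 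This completes the proof.
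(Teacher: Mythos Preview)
Your approach is sound and essentially correct. Note, however, that the paper does not actually prove this lemma: it merely quotes the estimate and refers to \cite{Win10,J18b} for the proof. The elementary decomposition you outline---reduce by the substitution $s\mapsto t-s$ to the case $\gamma<\delta$, factor out $e^{-\gamma t}$, expand the product, and isolate the only delicate piece $L(t)=\int_0^t(t-s)^{-\alpha}s^{-\beta}e^{-\eta s}\,ds$---is precisely the standard route taken in those references.

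Two small remarks. First, your symmetry claim is slightly overstated: the extra constant $\bigl(\tfrac{2(1-\alpha-\beta)}{\alpha(\delta-\gamma)}\bigr)^{(1-\alpha-\beta)/\alpha}$ in the statement is \emph{not} invariant under $(\alpha,\gamma)\leftrightarrow(\beta,\delta)$, so after reflecting you formally obtain the companion constant with $\beta$ and $\gamma-\delta$ in place of $\alpha$ and $\delta-\gamma$; this is harmless since the statement only says the constant ``may also depend on'' that quantity. Second, to recover the precise additive structure $C\cdot\bigl(\tfrac{1}{|\delta-\gamma|}+\tfrac{1}{1-\alpha}+\tfrac{1}{1-\beta}\bigr)$ it is cleaner to split $\int_0^t s^{-\beta}e^{-\eta s}\,ds$ at $s=1$ (giving $\tfrac{1}{1-\beta}+\tfrac{1}{\eta}$) rather than invoking $\Gamma(1-\beta)\eta^{\beta-1}$, which yields a product $\tfrac{1}{1-\beta}\cdot\eta^{\beta-1}$ that does not directly fit the asserted sum. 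These are cosmetic adjustments; the argument is correct.
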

Next, we recall the important $L^p-L^q$ estimates for the Neumann heat semigroup on bounded domains (see e.g., \cite{Cao,Win10}).
\begin{lemma}\label{lmpq}
	Suppose $\{e^{t\Delta}\}_{t\geq0}$ is the Neumann heat semigroup in $\Omega$, and $\lambda_1>0$ denote the first nonzero eigenvalue of $-\Delta$ in $\Omega$ under Neumann boundary conditions. Then there exist $k_1,..., k_4>0$ which only depend on $\Omega$ such that the following properties hold:
	\begin{enumerate}[(i)]
		\item If $1\leq q\leq p\leq \infty,$ then
		\begin{equation}
		\|e^{t\Delta}w\|_{L^p(\Omega)}\leq k_1(1+t^{-\frac{d}{2}(\frac1q-\frac1p)})e^{-\lambda_1t}\|w\|_{L^q(\Omega)}\qquad\text{for all}\;\;t>0
		\end{equation}for all $w\in L^q_0(\Omega)$;
		\item  If $1\leq q\leq p\leq \infty,$ then
		\begin{equation}
		\|\nabla e^{t\Delta}w\|_{L^p(\Omega)}\leq k_2(1+t^{-\frac12-\frac{d}{2}(\frac1q-\frac1p)})e^{-\lambda_1t}\|w\|_{L^q(\Omega)}\qquad\text{for all}\;\;t>0
		\end{equation}for each $w\in L^q(\Omega)$;
		\item If $2\leq q\leq p<\infty,$ then
		\begin{equation}
		\|\nabla e^{t\Delta}w\|_{L^p(\Omega)}\leq k_3e^{-\lambda_1 t}(1+t^{-\frac{d}{2}(\frac1q-\frac1p)})\|\nabla w\|_{L^q(\Omega)}\qquad\text{for all}\;\;t>0
		\end{equation}for all $w\in W^{1,p}(\Omega)$;
		\item  If $1<q\leq p\leq \infty,$ then
		\begin{equation}
		\|e^{t\Delta}\nabla \cdot w\|_{L^p(\Omega)}\leq k_4(1+t^{-\frac12-\frac{d}{2}(\frac1q-\frac1p)})e^{-\lambda_1 t}\|w\|_{L^q(\Omega)}\quad\text{for all}\;\;t>0
		\end{equation}for any $w\in (W^{1,p}(\Omega))^d.$
	\end{enumerate}
\end{lemma}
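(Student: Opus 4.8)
The plan is to reduce all four inequalities to three standard facts about the Neumann Laplacian on a smooth bounded domain and then bootstrap, routing every exponential factor through the zero–mean subspace so that the sharp rate $\lambda_1$ survives. Throughout I use that $\{e^{t\Delta}\}_{t\ge0}$ is self-adjoint on $L^2(\Omega)$, conserves the spatial mean, is positivity preserving and an $L^p$-contraction, and leaves $L^2_0(\Omega)$ invariant. The three inputs are: \emph{(a) ultracontractivity}, i.e. the Sobolev (equivalently Nash) inequality on $\Omega$ yields $\|e^{t\Delta}\|_{L^1(\Omega)\to L^\infty(\Omega)}\le Ct^{-d/2}$ for $0<t\le1$; \emph{(b) the spectral gap}, namely $\|e^{t\Delta}w\|_{L^2(\Omega)}\le e^{-\lambda_1 t}\|w\|_{L^2(\Omega)}$ for $w\in L^2_0(\Omega)$, by the spectral theorem; and \emph{(c) the analytic-semigroup/Sobolev identification}: for $1<q<\infty$ the shifted operator $A:=1-\Delta$ (Neumann domain) is sectorial, $\|A^{1/2}e^{t\Delta}\|_{L^q\to L^q}\le Ct^{-1/2}$ for $0<t\le1$, and $D(A^{1/2})=W^{1,q}(\Omega)$ with equivalent norms, so that $\|\nabla v\|_{L^q}\le C\|A^{1/2}v\|_{L^q}$ and, by Poincar\'e, $\|A^{1/2}v\|_{L^q}\le C\|\nabla v\|_{L^q}$ for $v\in L^q_0(\Omega)$. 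The identification in (c), which rests on the bounded $H^\infty$-calculus of the Neumann Laplacian on $L^q$, is the one nontrivial ingredient I would cite rather than reprove.

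\emph{Estimates (i) and (iii).} For (i) with $0<t\le1$, interpolating (a) against $L^p$-contractivity via Riesz--Thorin gives $\|e^{t\Delta}w\|_{L^p}\le Ct^{-\frac d2(\frac1q-\frac1p)}\|w\|_{L^q}$, into which the harmless factor $e^{-\lambda_1 t}\simeq1$ may be inserted. For $t>1$ and $w\in L^q_0$ I factor $e^{t\Delta}=e^{\Delta}\,e^{(t-2)\Delta}\,e^{\Delta}$: the outer factors map $L^q_0\to L^2_0$ and $L^2_0\to L^p$ by the smoothing just shown (mean preserved), while the middle factor contributes $e^{-\lambda_1(t-2)}$ by (b); absorbing $e^{2\lambda_1}$ yields $\|e^{t\Delta}w\|_{L^p}\le Ce^{-\lambda_1 t}\|w\|_{L^q}$, and the two regimes combine since $(1+t^{-\frac d2(\frac1q-\frac1p)})$ dominates the short-time singularity and is comparable to $1$ for $t\ge1$. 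For (iii) the gradient annihilates the mean, so $\nabla e^{t\Delta}w=\nabla e^{t\Delta}Pw$ with $Pw:=w-\overline w\in L^q_0$; using (c), the commutation of $A^{1/2}$ with $e^{t\Delta}$, and that $A^{1/2}$ preserves $L^q_0$, I get $\|\nabla e^{t\Delta}w\|_{L^p}\le C\|A^{1/2}e^{t\Delta}Pw\|_{L^p}=C\|e^{t\Delta}(A^{1/2}Pw)\|_{L^p}$. Since $A^{1/2}Pw\in L^q_0$, part (i) applies and, together with $\|A^{1/2}Pw\|_{L^q}\le C\|\nabla w\|_{L^q}$ from (c), gives exactly (iii); the hypothesis $2\le q$ is what lets Poincar\'e bound the full $D(A^{1/2})$-norm by $\|\nabla w\|_{L^q}$.

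\emph{Estimates (ii) and (iv).} For (ii), again $\nabla e^{t\Delta}w=\nabla e^{t\Delta}Pw$ and $\|\nabla e^{t\Delta}Pw\|_{L^p}\le C\|A^{1/2}e^{t\Delta}Pw\|_{L^p}$. On $(0,1]$ I split $A^{1/2}e^{t\Delta}=\bigl(A^{1/2}e^{(t/2)\Delta}\bigr)\,e^{(t/2)\Delta}$ and multiply the analyticity bound $\|A^{1/2}e^{(t/2)\Delta}\|_{L^p\to L^p}\le Ct^{-1/2}$ by the short-time case of (i), obtaining $Ct^{-\frac12-\frac d2(\frac1q-\frac1p)}\|w\|_{L^q}$, which matches the claim there. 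For $t>1$ I peel off one unit step, $\nabla e^{t\Delta}Pw=\nabla e^{(1/2)\Delta}\bigl(e^{(t-1/2)\Delta}Pw\bigr)$: the first factor is bounded on $L^p$ (the $t=\tfrac12$, $q=p$ case just treated), while part (i) gives $\|e^{(t-1/2)\Delta}Pw\|_{L^p}\le Ce^{-\lambda_1(t-1/2)}\|w\|_{L^q}$, so $\|\nabla e^{t\Delta}w\|_{L^p}\le Ce^{-\lambda_1 t}\|w\|_{L^q}$ with the singular prefactor comparable to $1$. Finally (iv) follows by duality: self-adjointness of $e^{t\Delta}$ and integration by parts against $\partial_\nu e^{t\Delta}\psi=0$ give $\int_\Omega (e^{t\Delta}\nabla\cdot w)\psi\,dx=-\int_\Omega w\cdot\nabla e^{t\Delta}\psi\,dx$, and taking the supremum over $\|\psi\|_{L^{p'}}\le1$ and applying (ii) to $\nabla e^{t\Delta}$ from $L^{p'}$ to $L^{q'}$ yields (iv); the dual exponents satisfy the hypotheses precisely when $1<q\le p\le\infty$, and $\tfrac1{p'}-\tfrac1{q'}=\tfrac1q-\tfrac1p$ reproduces the stated singular exponent and the rate $e^{-\lambda_1 t}$.

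\emph{Main obstacle.} The two genuinely delicate points are, first, guaranteeing that the \emph{exact} rate $\lambda_1$ (and not some smaller number) passes through every composition — this is why I consistently project onto the zero-mean part and route all long-time decay through the spectral gap (b) on $L^2_0$ — and, second, the norm equivalence $D\bigl((1-\Delta)^{1/2}\bigr)=W^{1,q}(\Omega)$ underpinning (ii)--(iii), which for a general smooth bounded domain is a nontrivial consequence of the bounded imaginary powers / bounded $H^\infty$-calculus of the Neumann Laplacian on $L^q$; that is the input I would take from the literature rather than reprove. The remaining exponent bookkeeping in the short-time/long-time splits is routine.
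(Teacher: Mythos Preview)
The paper does not prove this lemma at all: it is introduced with ``we recall the important $L^p$--$L^q$ estimates for the Neumann heat semigroup on bounded domains (see e.g., \cite{Cao,Win10})'' and simply quoted from the literature. So there is no argument on the paper's side to compare against; your proposal is in effect a reconstruction of a result the author takes as a black box.

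Your route via ultracontractivity, the $L^2_0$ spectral gap, and the Kato square-root identification $D((1-\Delta)^{1/2})=W^{1,q}$ is a standard and essentially correct way to obtain these estimates, and your insistence on projecting to the zero-mean part before invoking decay is exactly what preserves the sharp exponent $\lambda_1$. Two points deserve a little more care. First, the analytic-semigroup bound $\|A^{1/2}e^{t\Delta}\|_{L^p\to L^p}\le Ct^{-1/2}$ and the identification $D(A^{1/2})=W^{1,q}$ that you use for (ii)--(iv) are only available for $1<p,q<\infty$, whereas the stated lemma allows $q=1$ in (ii) and $p=\infty$ in (ii) and (iv); those endpoints are handled in \cite{Win10} not through functional calculus but through pointwise Gaussian upper bounds on the Neumann heat kernel and its spatial gradient, so you should either cite that directly or add a short kernel argument for the endpoint cases. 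Second, your explanation that ``the hypothesis $2\le q$ is what lets Poincar\'e bound the full $D(A^{1/2})$-norm by $\|\nabla w\|_{L^q}$'' is not quite the reason for the restriction in (iii): Poincar\'e and the square-root identification both hold for all $1<q<\infty$, and in \cite{Win10} the constraint $q\ge2$ arises instead from an $L^2$-based interpolation step in the original proof. These are polish issues rather than structural gaps; the skeleton of your argument is sound.
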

Last, we recall the following auxiliary result given in \cite[Lemma 2.8]{J19} which unveils the dissipative property of the solutions.
\begin{lemma}\label{lm0}
	Suppose that $0\leq f_n(t)\in L^1(0,\infty)$ with $n\in\mathbb{N}$ is a sequence of functions such that for any $n$, there holds
	\begin{equation}\label{lm0a}
	\int_0^\infty f_n(s)ds\leq C
	\end{equation}with $C>0$ independent of $n$. Then, 
for arbitrary $\sigma>0$, there is $k_\sigma>0$ such that for any $n$, there is a time $t^n_{\sigma}\in(0, k_\sigma)$ such that $f_{n}(t^n_{\sigma})\leq \sigma$.
\end{lemma}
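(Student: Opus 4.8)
The plan is to prove this by a direct averaging (mean-value) argument, with no limiting procedure required; the uniformity in $n$ will come for free from the uniformity of the $L^1$ bound $C$. The underlying principle is elementary: a nonnegative integrable function cannot stay strictly above a fixed threshold $\sigma$ on a time interval whose length exceeds its total mass divided by $\sigma$, for otherwise its integral over that interval alone would already surpass the total mass.

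First I would fix $\sigma>0$ and set $k_\sigma:=2C/\sigma$ (indeed any value strictly larger than $C/\sigma$ works). The key observation is that this choice depends only on $\sigma$ and the uniform bound $C$, and \emph{not} on $n$, which is exactly what the statement demands. Next, for each fixed $n$ I would consider the sublevel set
\[
E_n:=\{t\in(0,k_\sigma): f_n(t)\leq\sigma\},
\]
computed for a fixed measurable representative of $f_n$, and show it has positive Lebesgue measure. I would argue by contradiction: if instead $f_n(t)>\sigma$ for almost every $t\in(0,k_\sigma)$, then integrating and using $f_n\geq 0$ gives
\[
\int_0^{k_\sigma} f_n(s)\,ds \;\geq\; \sigma\, k_\sigma \;=\; 2C \;>\; C \;\geq\; \int_0^\infty f_n(s)\,ds \;\geq\; \int_0^{k_\sigma} f_n(s)\,ds,
\]
which contradicts the hypothesis \eqref{lm0a}. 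Hence $|E_n|>0$; in particular $E_n\neq\emptyset$, so I may select any $t^n_\sigma\in E_n\subset(0,k_\sigma)$, at which $f_n(t^n_\sigma)\leq\sigma$ holds by construction, completing the proof.

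I do not anticipate any serious obstacle, as the argument is entirely elementary. The only point requiring mild care is that each $f_n$ is only an $L^1$-function, hence defined almost everywhere, so one must fix a representative and then extract a genuine point $t^n_\sigma$ from a set of positive measure; this is harmless. The feature worth emphasizing is the \emph{source} of the uniformity: because the waiting-time bound $k_\sigma$ is determined solely by $C/\sigma$, and $C$ is independent of $n$, the same $k_\sigma$ serves every member of the sequence simultaneously. This is precisely the property that makes the lemma usable in the subsequent dissipation arguments, where it is applied to a family of approximating solutions indexed by the regularization parameter.
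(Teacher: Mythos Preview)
Your argument is correct: choosing $k_\sigma=2C/\sigma$ and observing that $f_n>\sigma$ a.e.\ on $(0,k_\sigma)$ would force $\int_0^{k_\sigma} f_n\geq 2C>C$ is exactly the right mean-value reasoning, and the uniformity in $n$ indeed comes directly from the uniform bound $C$. The paper does not supply its own proof of this lemma---it merely cites \cite[Lemma~2.8]{J19}---so there is nothing to compare against, but your proof is the standard elementary one and is complete as written.
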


\section{Existence of Global Solutions}
\subsection{The Energy-Dissipation Relation}
Now we introduce the following energy-dissipation relation for the approximation system \eqref{chemo1a} which plays a key role in deriving the global existence for our problem.
\begin{lemma}\label{en} For any $\e\geq0$, the solution $(\rho_\e,c_\e)$ satisfies the following relation
	\begin{align}\label{energy}
		\frac{d}{dt} E_\e(\rho_\e(t),c_\e(t))+&\int_\Omega\frac{|\nabla \rho_\e|^2}{\rho_\e(1-\e\rho_\e)}dx
	+\int_\Omega c_\e|\nabla^2\log c_\e|^2dx\non\\
	&\qquad+\int_\Omega \frac{\rho_\e|\nabla c_\e|^2}{2c_\e^2}dx+\int_\Omega \frac{|\nabla c_\e|^2}{2c_\e} dx=\frac12\int_{\partial\Omega}\frac1{c_\e} \frac{\partial}{\partial\nu}|\nabla c_\e|^2ds
	\end{align}
where $E_\e$ is given by
\begin{equation}\nonumber
	E_\e(\rho,c)=\int_\Omega \left(\rho\log\rho+\frac{1}{\e}(1-\e\rho)\log(1-\e\rho)+2 |\nabla\sqrt{c}|^2\right)dx
\end{equation}	if $\e>0$ and
\begin{equation}\nonumber
	E_0(\rho,c)=\int_\Omega \left(\rho\log\rho+2 |\nabla\sqrt{c}|^2\right)dx.
\end{equation}
\end{lemma}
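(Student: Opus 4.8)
The plan is to derive \eqref{energy} as the combination of two separate computations: an entropy-type identity for the $\rho_\e$-equation and a Fisher-information-type identity for the $c_\e$-equation, glued together precisely through the cross term $\int_\Omega \rho_\e \nabla^2 \log c_\e : (\dots)$ that appears in both. First I would test the $\rho_\e$-equation against $\log\frac{\rho_\e}{1-\e\rho_\e}$, which is the derivative of the convex density $\rho\log\rho + \frac1\e(1-\e\rho)\log(1-\e\rho)$. Using $\partial_t\rho_\e - \Delta\rho_\e = \nabla\cdot\big(\tfrac{\rho_\e(1-\e\rho_\e)}{c_\e}\nabla c_\e\big)$ and integrating by parts (boundary terms vanish by $\partial_\nu\rho_\e = \partial_\nu c_\e = 0$), the diffusion term produces $\int_\Omega \frac{|\nabla\rho_\e|^2}{\rho_\e(1-\e\rho_\e)}\,dx$ since $\nabla\log\frac{\rho_\e}{1-\e\rho_\e} = \frac{\nabla\rho_\e}{\rho_\e(1-\e\rho_\e)}$, and the chemotactic term produces a remainder of the form $-\int_\Omega \nabla c_\e\cdot\nabla\big(\tfrac{\rho_\e(1-\e\rho_\e)}{c_\e}\big)^{-1}\cdots$; more usefully, one rewrites it as $\int_\Omega \frac{\rho_\e(1-\e\rho_\e)}{c_\e}\nabla c_\e\cdot\nabla\log\frac{\rho_\e}{1-\e\rho_\e}\,dx$, and after cancelling the $(1-\e\rho_\e)$ factors this is $\int_\Omega \frac{\nabla c_\e\cdot\nabla\rho_\e}{c_\e}\,dx = -\int_\Omega \rho_\e\,\Delta\log c_\e\,dx - \int_\Omega \rho_\e\frac{|\nabla c_\e|^2}{c_\e^2}\,dx$ after another integration by parts. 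Thus the $\rho_\e$-computation gives
\begin{equation}\nonumber
\frac{d}{dt}\int_\Omega\Big(\rho_\e\log\rho_\e+\tfrac1\e(1-\e\rho_\e)\log(1-\e\rho_\e)\Big)dx + \int_\Omega\frac{|\nabla\rho_\e|^2}{\rho_\e(1-\e\rho_\e)}dx = -\int_\Omega\rho_\e\Delta\log c_\e\,dx - \int_\Omega\frac{\rho_\e|\nabla c_\e|^2}{c_\e^2}dx.
\end{equation}

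Second I would handle the chemical part. The natural quantity is $\frac{d}{dt}\int_\Omega 2|\nabla\sqrt{c_\e}|^2\,dx = \frac{d}{dt}\int_\Omega \frac{|\nabla c_\e|^2}{2c_\e}\,dx$. Differentiating and using $\partial_t c_\e = \Delta c_\e - c_\e + \rho_\e$, one gets $\int_\Omega \big(\frac{\nabla c_\e\cdot\nabla\partial_t c_\e}{c_\e} - \frac{|\nabla c_\e|^2\partial_t c_\e}{2c_\e^2}\big)dx = \int_\Omega\big(-\nabla\cdot\frac{\nabla c_\e}{c_\e} - \frac{|\nabla c_\e|^2}{2c_\e^2}\big)\partial_t c_\e\,dx = -\int_\Omega \frac{\Delta\sqrt{c_\e}}{\sqrt{c_\e}}\,\partial_t c_\e\,dx$ modulo the boundary term, using the first identity in \eqref{n2}. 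Substituting $\partial_t c_\e$ and splitting: the $\rho_\e$-piece gives $-\int_\Omega \frac{\Delta\sqrt{c_\e}}{\sqrt{c_\e}}\rho_\e\,dx = \frac12\int_\Omega\rho_\e\big(\frac{|\nabla c_\e|^2}{c_\e^2} - \frac{\Delta c_\e}{c_\e}\big)dx$, i.e. $-\frac12\int_\Omega\rho_\e\Delta\log c_\e\,dx$; the $\Delta c_\e$-piece, via \eqref{n9} of Lemma \ref{lmanalysis2} (applied with $\varphi = c_\e$ — precisely, $\int_\Omega[-\nabla c_\e\cdot\nabla(\tfrac{\Delta c_\e}{c_\e}) - \tfrac12|\nabla\log c_\e|^2\Delta c_\e]dx = -\tfrac12\int_{\partial\Omega}\tfrac1{c_\e}\partial_\nu|\nabla c_\e|^2\,ds + \int_\Omega c_\e|\nabla^2\log c_\e|^2\,dx$), yields $\int_\Omega c_\e|\nabla^2\log c_\e|^2\,dx$ together with the boundary term $-\tfrac12\int_{\partial\Omega}\tfrac1{c_\e}\partial_\nu|\nabla c_\e|^2\,ds$; and the $-c_\e$-piece gives $+\int_\Omega \frac{|\nabla c_\e|^2}{2c_\e}\,dx$. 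Here I would need to be careful about the algebra: writing $\Delta\log c_\e = \frac{\Delta c_\e}{c_\e} - \frac{|\nabla c_\e|^2}{c_\e^2}$ and tracking that $2\frac{\Delta\sqrt{c_\e}}{\sqrt{c_\e}} = \frac{\Delta c_\e}{c_\e} - \frac{|\nabla c_\e|^2}{2c_\e^2}$, the factors of $\tfrac12$ must be chased consistently. The net result of the chemical computation is
\begin{equation}\nonumber
\frac{d}{dt}\int_\Omega 2|\nabla\sqrt{c_\e}|^2\,dx + \int_\Omega c_\e|\nabla^2\log c_\e|^2\,dx + \int_\Omega\frac{|\nabla c_\e|^2}{2c_\e}\,dx = -\tfrac12\int_\Omega\rho_\e\Delta\log c_\e\,dx + \tfrac12\int_{\partial\Omega}\tfrac1{c_\e}\partial_\nu|\nabla c_\e|^2\,ds.
\end{equation}

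Finally, I would add the two identities. The combined left-hand sides sum to $\frac{d}{dt}E_\e + \int_\Omega\frac{|\nabla\rho_\e|^2}{\rho_\e(1-\e\rho_\e)} + \int_\Omega c_\e|\nabla^2\log c_\e|^2 + \int_\Omega\frac{|\nabla c_\e|^2}{2c_\e}$, and the combined right-hand sides are $-\tfrac32\int_\Omega\rho_\e\Delta\log c_\e\,dx - \int_\Omega\frac{\rho_\e|\nabla c_\e|^2}{c_\e^2}\,dx + \tfrac12\int_{\partial\Omega}\tfrac1{c_\e}\partial_\nu|\nabla c_\e|^2\,ds$. Comparing with the claimed \eqref{energy}, which has $+\int_\Omega\frac{\rho_\e|\nabla c_\e|^2}{2c_\e^2}$ on the left and only the boundary term on the right, forces the cross terms to reorganize as $-\tfrac32\int_\Omega\rho_\e\Delta\log c_\e - \int_\Omega\frac{\rho_\e|\nabla c_\e|^2}{c_\e^2} = -\tfrac32\int_\Omega\rho_\e\Delta\log c_\e - \tfrac32\int_\Omega\frac{\rho_\e|\nabla c_\e|^2}{c_\e^2} + \tfrac12\int_\Omega\frac{\rho_\e|\nabla c_\e|^2}{c_\e^2}$, which suggests I have mis-split a $\tfrac12$ somewhere above; the correct route is to not eliminate $\Delta\log c_\e$ but rather keep, in the $\rho_\e$-identity, the term in the form $\int_\Omega\frac{\nabla\rho_\e\cdot\nabla c_\e}{c_\e}$ and integrate by parts the other way, or equivalently to test the $c_\e$-equation so that the $\rho_\e$-pieces cancel exactly against $\int_\Omega\rho_\e\Delta\log c_\e$ with the right coefficient. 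I expect the main obstacle to be precisely this bookkeeping of the cross term $\int_\Omega \rho_\e\Delta\log c_\e\,dx$ — making the two computations produce cancelling multiples of it (so that no such term survives in \eqref{energy}) while leaving behind exactly $\int_\Omega\frac{\rho_\e|\nabla c_\e|^2}{2c_\e^2}\,dx$ on the dissipation side. Once the identity is established for $\e>0$ (where all quantities are finite and the solution is classical by Theorem \ref{locala}), the case $\e=0$ follows formally by the same computation with $E_0$, noting $\frac1\e(1-\e\rho)\log(1-\e\rho)\to -\rho$ is absorbed into constants; the convexity of the density and positivity of $c_\e$ from Lemma \ref{unilow} guarantee all integrands are well-defined. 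For the convex-domain applications, the boundary term is nonpositive by the standard fact that $\partial_\nu|\nabla w|^2 \le 0$ on $\partial\Omega$ when $\partial_\nu w = 0$ and $\Omega$ is convex, turning \eqref{energy} into the dissipation inequality \eqref{dissipation}.
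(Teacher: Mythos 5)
Your strategy is the right one and is essentially the paper's: test the $\rho_\e$-equation with $\log\frac{\rho_\e}{1-\e\rho_\e}$, test the $c_\e$-equation with a multiple of $\frac{\Delta c_\e}{c_\e}$ so that \eqref{n9} produces $\int_\Omega c_\e|\nabla^2\log c_\e|^2dx$ plus the boundary term, and add. But as written the computation does not close, and you concede this yourself in the last paragraph without repairing it; the identity \eqref{energy} is therefore not actually established. The errors are concrete. First, in the $\rho_\e$-identity the cross term integrates by parts to
\begin{equation}\nonumber
\int_\Omega\frac{\nabla\rho_\e\cdot\nabla c_\e}{c_\e}\,dx=-\int_\Omega\rho_\e\,\nabla\cdot\Big(\frac{\nabla c_\e}{c_\e}\Big)dx=-\int_\Omega\rho_\e\,\Delta\log c_\e\,dx
\end{equation}
exactly; your extra $-\int_\Omega\rho_\e\frac{|\nabla c_\e|^2}{c_\e^2}dx$ is spurious. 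Second, by \eqref{n2} one has $\frac{d}{dt}\int_\Omega 2|\nabla\sqrt{c_\e}|^2dx=-2\int_\Omega\frac{\Delta\sqrt{c_\e}}{\sqrt{c_\e}}\,\partial_tc_\e\,dx$ (you dropped the factor $2$), and consequently the $\rho_\e$-piece is $-\int_\Omega\big(\frac{\Delta c_\e}{c_\e}-\frac{|\nabla c_\e|^2}{2c_\e^2}\big)\rho_\e\,dx$, which is \emph{not} $-\frac12\int_\Omega\rho_\e\Delta\log c_\e\,dx$. These two slips are exactly what produces your stray $-\frac32\int_\Omega\rho_\e\Delta\log c_\e$ and the wrong sign on $\int_\Omega\frac{\rho_\e|\nabla c_\e|^2}{c_\e^2}$.

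The repair is only bookkeeping, and it lands you on the paper's proof. Keep the cross term in the $\rho_\e$-identity as $-\int_\Omega\frac{\nabla\rho_\e\cdot\nabla c_\e}{c_\e}dx$ and expand the $\rho_\e$-piece of the $c_\e$-computation the other way:
\begin{equation}\nonumber
-2\int_\Omega\frac{\Delta\sqrt{c_\e}}{\sqrt{c_\e}}\rho_\e\,dx=-\int_\Omega\rho_\e\frac{\Delta c_\e}{c_\e}dx+\int_\Omega\frac{\rho_\e|\nabla c_\e|^2}{2c_\e^2}dx=\int_\Omega\frac{\nabla\rho_\e\cdot\nabla c_\e}{c_\e}dx-\int_\Omega\frac{\rho_\e|\nabla c_\e|^2}{2c_\e^2}dx.
\end{equation}
The two cross terms then cancel exactly, the residue $+\int_\Omega\frac{\rho_\e|\nabla c_\e|^2}{2c_\e^2}dx$ moves to the dissipation side with the correct coefficient $\frac12$, the $-c_\e$-piece contributes $+\int_\Omega\frac{|\nabla c_\e|^2}{2c_\e}dx$ (using $\int_\Omega\Delta c_\e\,dx=0$), the $\Delta c_\e$-piece gives the Hessian and boundary terms via \eqref{n9}, and \eqref{energy} follows. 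This is the same decomposition as the paper (which multiplies the second equation by $-\frac{\Delta c_\e}{c_\e}$ and then rewrites $-\int_\Omega\Delta c_\e\,\partial_t\log c_\e\,dx$ as $\frac12\frac{d}{dt}\int_\Omega c_\e|\nabla\log c_\e|^2dx$ minus lower-order terms); your choice of multiplier $-2\frac{\Delta\sqrt{c_\e}}{\sqrt{c_\e}}$ is an equivalent, marginally more direct packaging, but you must carry it through to the end rather than stopping at the observation that a $\frac12$ has gone astray.
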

\begin{proof}
	First, a multiplication of the first equation  with $\log \rho_\e-\log(1-\e\rho_\e)$ together with an integration over $\Omega$ yields that
	\begin{align}\label{lya1}
	\frac{d}{dt}\int_\Omega \left(\rho_\e\log\rho_\e+\frac{1}{\e}(1-\e\rho_\e)\log(1-\e\rho_\e)\right) dx+\int_\Omega\frac{|\nabla \rho_\e|^2}{\rho_\e(1-\e\rho_\e)}dx=-\int_\Omega\frac{\nabla c_\e\cdot\nabla \rho_\e}{c_\e} dx.
	\end{align}
	On the other hand, multiplying the second equation by $-\frac{\Delta c_\e}{c_\e}$ and integrating by parts, we obtain that
	\begin{align}
	-\int_\Omega \Delta c_\e\partial_t\log c_\e dx-\int_\Omega\nabla c_\e\cdot\nabla \left(\frac{\Delta c_\e}{c_\e}\right)dx=\int_\Omega \frac{\nabla \rho_\e\cdot\nabla c_\e}{c_\e} dx-\int_\Omega\rho_\e\frac{|\nabla c_\e|^2}{c_\e^2}dx\nonumber
	\end{align} 
	where by integration by parts, we infer that
	\begin{align}
	-\int_\Omega \Delta c_\e\partial_t\log c_\e dx=&\int_\Omega c_\e\nabla \log c_\e\cdot\partial_t\nabla \log c_\e dx\non\\
	=&\int_\Omega\frac{c_\e}2\frac{\partial}{\partial t}|\nabla\log c_\e|^2dx\non\\
	=&\frac12\frac{d}{dt}\int_\Omega c_\e |\nabla\log c_\e|^2dx-\frac12\int_\Omega |\nabla\log c_\e|^2(\Delta c_\e+\rho_\e-c_\e)dx\non\\
	=&\frac12\frac{d}{dt}\int_\Omega c_\e |\nabla\log c_\e|^2dx-\frac12\int_\Omega |\nabla\log c_\e|^2\Delta c_\e dx\non\\
	&\qquad-\int_\Omega\left(\frac{\rho_\e|\nabla c_\e|^2}{2c_\e^2}- \frac{|\nabla c_\e|^2}{2c_\e}\right) dx.\non
	\end{align}
Then relation \eqref{energy} follows from Lemma \ref{lmanalysis2} and the above identities. This completes the proof. 
\end{proof}	
For convex domains, the boundary integration term on the right hand-side of relation \eqref{energy} is non-positive and hence can be neglected. In contrast, when the domain is non-convex, we need a lemma of analysis \cite[Lemma 4.2]{MizoSoup} to deal with this boundary integration and thus an application of \cite[Lemma 2.4]{JWZ15} indicates that for any $\delta>0$, there holds
	\beq
	\begin{split}
		&\frac{1}{2}\left|\int_{\partial\Omega}\frac{1}{c_\e}\frac{\partial}{\partial\nu}|\nabla c_\e|^2ds\right| \\
		&\ \ \leq \d\int_{\Omega}c_\e|\Delta\log c_\e|^2dx+\d\int_{\Omega}\frac{|\nabla c_\e|^4}{c_\e^3}dx+C_{\d}\|c_\e\|_{L^1(\Omega)}
	\end{split}\label{bde1}
	\eeq where $C_\d$ is a constant that may depend on $\Omega$ and $\d$, but not on $c_\e$. Thus an application of  Lemma \ref{lm6} gives
	\begin{lemma}\label{lya2}There holds
\begin{align}\label{lya3}
&\frac{d}{dt}E_\e(\rho_\e(t),c_\e(t))+\int_\Omega\frac{|\nabla \rho_\e|^2}{\rho_\e(1-\e\rho_\e)}dx\non\\
&\qquad\qquad+\frac12\int_\Omega c_\e|\nabla^2\log c_\e|^2dx+\int_\Omega \frac{\rho_\e|\nabla c_\e|^2}{2c_\e^2}dx+\int_\Omega \frac{|\nabla c_\e|^2}{2c_\e} dx\leq C\| c_\e\|_{L^1(\Omega)}
\end{align}with $C>0$ depending on $\Omega$ and $d$ only.
	\end{lemma}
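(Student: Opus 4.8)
The plan is to read off \eqref{lya3} from the energy--dissipation identity \eqref{energy} of Lemma \ref{en} by absorbing the boundary term $\frac12\int_{\partial\Omega}\frac1{c_\e}\frac{\partial}{\partial\nu}|\nabla c_\e|^2\,ds$ sitting on its right-hand side. When $\Omega$ is convex this term is already nonpositive (since $\partial_\nu c_\e=0$ on $\partial\Omega$ forces $\partial_\nu|\nabla c_\e|^2\leq 0$ there by convexity), so \eqref{energy} immediately gives \eqref{lya3} with $C=0$; the content of the lemma is therefore the general, possibly non-convex, case. There I would first invoke the trace estimate \eqref{bde1}, which for every $\d>0$ bounds the modulus of the boundary integral by $\d\int_\Omega c_\e|\Delta\log c_\e|^2\,dx+\d\int_\Omega\frac{|\nabla c_\e|^4}{c_\e^3}\,dx+C_\d\|c_\e\|_{L^1(\Omega)}$. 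Hence it suffices to dominate the first two integrals by a small constant times the dissipation term $\int_\Omega c_\e|\nabla^2\log c_\e|^2\,dx$ already present in \eqref{energy}.

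To do this I would use Lemma \ref{lm6}, which bounds $\int_\Omega\frac{|\Delta c_\e|^2}{c_\e}\,dx$, $\int_\Omega|\Delta\sqrt{c_\e}|^2\,dx$ and $\int_\Omega\frac{|\nabla c_\e|^4}{c_\e^3}\,dx$ by $C\int_\Omega c_\e|\nabla^2\log c_\e|^2\,dx$ with $C=C(d)$. The term $\int_\Omega\frac{|\nabla c_\e|^4}{c_\e^3}\,dx$ is covered directly. For $\int_\Omega c_\e|\Delta\log c_\e|^2\,dx$ I would use the pointwise identity $\Delta\log c_\e=\frac{\Delta c_\e}{c_\e}-\frac{|\nabla c_\e|^2}{c_\e^2}$ (which is consistent with the relation $2\frac{\Delta\sqrt{c_\e}}{\sqrt{c_\e}}=\frac{\Delta c_\e}{c_\e}-\frac{|\nabla c_\e|^2}{2c_\e^2}$ of Lemma \ref{lmanalysis2}) together with Young's inequality to obtain, pointwise, $c_\e|\Delta\log c_\e|^2\leq 2\frac{|\Delta c_\e|^2}{c_\e}+2\frac{|\nabla c_\e|^4}{c_\e^3}$; integrating and applying Lemma \ref{lm6} then yields a constant $C_1=C_1(d)$ with $\int_\Omega c_\e|\Delta\log c_\e|^2\,dx+\int_\Omega\frac{|\nabla c_\e|^4}{c_\e^3}\,dx\leq C_1\int_\Omega c_\e|\nabla^2\log c_\e|^2\,dx$.

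Plugging this back into \eqref{bde1} and then into \eqref{energy}, the boundary term is controlled by $\d C_1\int_\Omega c_\e|\nabla^2\log c_\e|^2\,dx+C_\d\|c_\e\|_{L^1(\Omega)}$. Choosing $\d=\frac1{2C_1}$, which now depends only on $d$, transfers half of the $\int_\Omega c_\e|\nabla^2\log c_\e|^2\,dx$ dissipation to the left-hand side and leaves precisely \eqref{lya3}, with a constant $C$ depending only on $\Omega$ and $d$. Every integration by parts above is legitimate because $(\rho_\e,c_\e)$ is a classical solution, smooth and with $c_\e$ bounded below by $c_*>0$ on $\overline\Omega\times(0,t_\e^+)$ by Theorem \ref{locala} and Lemma \ref{unilow}, exactly as in the proof of Lemma \ref{en}. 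I do not expect a genuine obstacle: the only delicate point is that the trace estimate \eqref{bde1} produces the quantity $c_\e|\Delta\log c_\e|^2$ rather than one of those appearing in Lemma \ref{lm6}, and the pointwise identity above bridges this gap; the remainder is a routine absorption argument.
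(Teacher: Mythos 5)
Your proposal is correct and follows exactly the paper's route: start from the identity \eqref{energy}, estimate the boundary integral by \eqref{bde1}, and absorb the two $\d$-weighted volume terms into the dissipation via Lemma \ref{lm6} before choosing $\d$ small. The only (immaterial) difference is that you bridge $\int_\Omega c_\e|\Delta\log c_\e|^2\,dx$ to Lemma \ref{lm6} through the pointwise identity $\Delta\log c_\e=\frac{\Delta c_\e}{c_\e}-\frac{|\nabla c_\e|^2}{c_\e^2}$ and Young's inequality, whereas one could equally use the pointwise Cauchy--Schwarz bound $|\Delta\log c_\e|^2\leq d|\nabla^2\log c_\e|^2$ that the paper invokes elsewhere; both give a constant depending only on $d$, so the absorption and the resulting dependence of $C$ on $\Omega$ and $d$ are as claimed.
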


As a consequence of Lemma \ref{lya2}, one can easily derive the uniform-in-time estimates as follows.
\begin{lemma}\label{boun}
	For any $\e\in[0,\zeta_0]$ and $t\in[0,t_\e^+)$, there is $\kappa_0>0$ depending only on $\Omega$ and $\|\rho_I\|_{L^1(\Omega)}$ such that the solution $(\rho_\e,c_\e)$ satisfies
	\begin{align}\label{bound00}
	\int_\Omega\left(\rho_\e(t)|\log\rho_\e(t)|+2|\nabla \sqrt{c_\e(t)}|^2\right)dx\leq C
	\end{align}	and
	\begin{align}\label{bound0}
		\int_0^te^{-\kappa_0(t-s)}\int_\Omega\left(\frac{|\nabla \rho_\e|^2}{\rho_\e}+ c_\e|\nabla^2\log c_\e|^2+\frac{\rho_\e|\nabla c_\e|^2}{c_\e^2}+ \frac{|\nabla c_\e|^2}{c_\e} \right)dxds\leq C
	\end{align}where $C$ depends on the initial data and $\Omega$.
\end{lemma}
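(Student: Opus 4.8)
The plan is to promote the energy--dissipation inequality \eqref{lya3} to a genuinely \emph{coercive} differential inequality and then close the argument by a Gronwall estimate with an exponential weight, taking care that every constant is independent of $\e$. Denote by $D_\e(t)$ the sum of the four nonnegative dissipation integrals appearing in \eqref{lya3}. The first, immediate step is that by the $L^1$ identity for $c_\e$ in Theorem \ref{locala} one has $\|c_\e(t)\|_{L^1(\Omega)}\le\max\{\|c_I\|_{L^1(\Omega)},\|\rho_I\|_{L^1(\Omega)}\}$ for all $t\in[0,t_\e^+)$, so the right-hand side of \eqref{lya3} is bounded by a fixed constant $C_1=C_1(\Omega,\rho_I,c_I)$ and hence $\frac{d}{dt}E_\e+D_\e\le C_1$.

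Next I would record two elementary, $\e$-uniform facts about $F_\e(\sigma):=\sigma\log\sigma+\frac1\e(1-\e\sigma)\log(1-\e\sigma)$ on $[0,1/\e]$. First, minimising in $\sigma$ gives $F_\e\ge F_\e\big(\tfrac1{1+\e}\big)=-\tfrac1\e\log(1+\e)\ge-1$, so $E_\e\ge-|\Omega|$, and since $2\int_\Omega|\nabla\sqrt{c_\e}|^2\,dx\ge0$ also $\int_\Omega F_\e(\rho_\e)\le E_\e$ and $2\int_\Omega|\nabla\sqrt{c_\e}|^2\,dx\le E_\e+|\Omega|$. Second, because $\frac1\e(1-\e\sigma)\log(1-\e\sigma)\le0$ one has $F_\e(\sigma)\le\sigma\log\sigma$, while splitting $\Omega$ according to $\{\e\rho_\e\le\tfrac12\}$ and using Chebyshev's inequality together with $-\log(1-s)\le2s$ on $[0,\tfrac12]$ and $\sup_{s\in[0,1]}(1-s)(-\log(1-s))=e^{-1}$ yields $\int_\Omega\big|\tfrac1\e(1-\e\rho_\e)\log(1-\e\rho_\e)\big|\,dx\le C\|\rho_\e\|_{L^1(\Omega)}$. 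Together these let one pass, up to an $\e$-independent constant, between $E_\e$, $\int_\Omega F_\e(\rho_\e)$, $\int_\Omega\rho_\e\log\rho_\e$ and $\int_\Omega\rho_\e|\log\rho_\e|$ (for the last, also using $-\sigma\log\sigma\le e^{-1}$ on $(0,1)$).

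The coercivity step, which I regard as the core of the proof, is to show $E_\e\le\tfrac12 D_\e+C_2$ with $C_2$ independent of $\e$. The term $2\int_\Omega|\nabla\sqrt{c_\e}|^2\,dx=\tfrac12\int_\Omega\frac{|\nabla c_\e|^2}{c_\e}\,dx$ is exactly the last integral of $D_\e$, and by integration by parts and the Neumann condition $\int_\Omega\frac{|\nabla c_\e|^2}{c_\e}\,dx=-\int_\Omega c_\e\Delta\log c_\e\,dx\le\sqrt d\,\|c_\e\|_{L^1(\Omega)}^{1/2}\big(\int_\Omega c_\e|\nabla^2\log c_\e|^2\,dx\big)^{1/2}$, so by Young's inequality it is $\le\eta\int_\Omega c_\e|\nabla^2\log c_\e|^2\,dx+C_\eta$; on the other hand $\int_\Omega F_\e(\rho_\e)\le\int_\Omega\rho_\e\log\rho_\e$, and a Gagliardo--Nirenberg interpolation for $\sqrt{\rho_\e}$, using only $\|\rho_\e\|_{L^1(\Omega)}=\|\rho_I\|_{L^1(\Omega)}$, gives $\int_\Omega\rho_\e\log\rho_\e\le\eta\int_\Omega\frac{|\nabla\rho_\e|^2}{\rho_\e}\,dx+C_\eta\le\eta\int_\Omega\frac{|\nabla\rho_\e|^2}{\rho_\e(1-\e\rho_\e)}\,dx+C_\eta$ since $1-\e\rho_\e\le1$. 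Choosing $\eta$ small enough proves the claim, and combined with $\frac{d}{dt}E_\e+D_\e\le C_1$ it gives $\frac{d}{dt}E_\e+\kappa_0 E_\e+\tfrac12 D_\e\le C_3$ for some $\kappa_0>0$; multiplying by $e^{\kappa_0 t}$ and integrating over $(0,t)$ yields
\[
E_\e(\rho_\e(t),c_\e(t))+\tfrac12\int_0^t e^{-\kappa_0(t-s)}D_\e(s)\,ds\le e^{-\kappa_0 t}E_\e(\rho_I,c_I)+\frac{C_3}{\kappa_0}.
\]
Since $\rho_I\in C^0(\overline\Omega)$ and $c_I\in C^1(\overline\Omega)$ with $c_I>0$, the quantity $E_\e(\rho_I,c_I)\le\int_\Omega\rho_I\log\rho_I\,dx+2\int_\Omega|\nabla\sqrt{c_I}|^2\,dx$ is bounded uniformly in $\e$, so the right-hand side is $\le C$. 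Estimate \eqref{bound00} then follows from the first summand together with the conversions of the previous paragraph, and \eqref{bound0} follows from the second summand after noting that its integrand is pointwise dominated by $2D_\e$ (because $1-\e\rho_\e\le1$ and by the exact matchings of the remaining three terms).

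The main obstacle is not conceptual but one of bookkeeping: keeping all constants independent of $\e$. The volume-filling correction $\frac1\e(1-\e\rho)\log(1-\e\rho)$ is of order $\e^{-1}$ where $\rho$ is close to $1/\e$, so on its own it spoils uniformity; it is only through its combination with $\rho\log\rho$ (first fact) and the $L^1_x$-smallness of its \emph{bad} part (second fact) that $E_\e$ stays comparable, uniformly in $\e$, to the energy of the limit problem. The Gagliardo--Nirenberg absorption of $\int_\Omega\rho_\e\log\rho_\e$ into $\int_\Omega|\nabla\sqrt{\rho_\e}|^2\,dx$ is routine but is the one point where the space dimension $d$ (and the borderline character of the two-dimensional case) intervenes, and the interpolation exponents there deserve a careful check.
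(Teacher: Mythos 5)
Your proposal is correct and follows essentially the same route as the paper: the energy--dissipation inequality of Lemma \ref{lya2} with the bounded $\|c_\e\|_{L^1(\Omega)}$, an $\e$-uniform comparison of $E_\e$ with $\int_\Omega\rho_\e|\log\rho_\e|\,dx$ via the sign and $L^1$-control of the volume-filling correction, the Gagliardo--Nirenberg absorption of $\int_\Omega\rho_\e\log\rho_\e\,dx$ into $\|\nabla\sqrt{\rho_\e}\|^2$, and an exponentially weighted Gronwall argument. The only (harmless) deviations are cosmetic: the paper bounds the correction term one-sidedly via $\xi+\tfrac1\e(1-\e\xi)\log(1-\e\xi)\ge0$, and it does not need your integration-by-parts absorption of $\int_\Omega\frac{|\nabla c_\e|^2}{2c_\e}\,dx$ into the Hessian term, since that quantity already appears verbatim among the dissipation terms and a small multiple of it suffices.
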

\begin{proof}
	First, in the same manner as in \cite[Lemma 2.4]{WinARMA}, by an elementary inequality
	\begin{equation}\nonumber
	\xi\log\xi\leq \frac{1}{p(p-1)}\xi^p,\qquad\forall\xi>0
	\end{equation}with any fixed $p\in(1,2)$, we deduce by the Gagliardo-Nirenberg inequality that for all $t>0$
	\begin{align}\nonumber
	\int_\Omega\rho_\e\log\rho_\e\leq&  \frac{1}{p(p-1)}\int_\Omega \rho_\e^p=\frac{1}{p(p-1)}\|\sqrt{\rho_\e}\|^{2p}_{L^{2p}(\Omega)}\non\\
	\leq& C\|\nabla \sqrt{\rho_\e}\|^{d(p-1)}\|\sqrt{\rho_\e}\|^{d-dp+2p}+C\|\sqrt{\rho_\e}\|^{2p}\nonumber
	\end{align}for $1<p<\min\{2,\frac{d}{d-2}\}$. Hence we obtain by the conservation of mass that
	\begin{equation}\nonumber
	\int_\Omega\rho_\e\log \rho_\e\leq C\|\nabla \sqrt{\rho_\e}\|^{d(p-1)}+C\qquad \forall \;t>0
	\end{equation}with $C>0$ depends on $\Omega$, $p$ and $\|\rho_I\|_{L^1(\Omega)}$ only. Now, we may pick $p\in(1,\frac{d+2}{d})$ such that
	\begin{equation}\nonumber
	\int_\Omega\rho_\e\log \rho_\e\leq C\|\nabla \sqrt{\rho_\e}\|^2+C\qquad \forall \;t>0.
	\end{equation}
	Next in view of following the elementary inequalities
	\begin{equation}\label{help1}
	\xi+\frac1{\e}(1-\e\xi)\log(1-\e\xi)\geq0\qquad \text{for}\;\;\xi\in[0,1/\e],
	\end{equation}
	\begin{equation}\label{help2}
	\xi\log\xi\geq-\frac{1}{e}\qquad \text{for}\;\;\xi\in[0,1]
	\end{equation}and
	\begin{equation}\label{help3}
	\frac{|\nabla \rho_\e|^2}{\rho_\e(1-\e\rho_\e)}\geq\frac{|\nabla \rho_\e|^2}{\rho_\e},
	\end{equation}
	 we infer that
	 \begin{align}
	 E_\e(\rho_\e,c_\e)\geq&\int_\Omega\rho_\e\log\rho_\e-\rho_\e+2|\nabla \sqrt{c_\e(t)}|^2\non\\
	 \geq&\int_\Omega\left(\rho_\e|\log\rho_\e|+2|\nabla \sqrt{c_\e(t)}|^2\right)dx-(\|\rho_I\|_{L^1(\Omega)}+\frac{2|\Omega|}{e})\nonumber
	 \end{align}and in addition,	 
	\begin{align}\label{unies9}
	E_\e(\rho_\e,c_\e)&\leq C\|\nabla \sqrt{\rho_\e}\|^2+\int_\Omega \frac{|\nabla c_\e|^2}{2c_\e}dx+C\non\\
	&\leq C\left(\int_\Omega \left(\frac{|\nabla \rho_\e|^2}{\rho_\e(1-\e\rho_\e)}+  \frac{|\nabla c_\e|^2}{2c_\e}\right)dx+1\right)
	\end{align}	where $C>0$ depends only on $\Omega$ and $\|\rho_I\|_{L^1(\Omega)}$. As a result,  there is $\kappa_0>0$ such that
	\begin{align}\label{diffineq}
	\frac{d}{dt}E_\e(\rho_\e(t),c_\e(t))+&\kappa_0E_\e(\rho_\e(t),c_\e(t))+C_{\kappa_0}\int_\Omega \left(\frac{|\nabla \rho_\e|^2}{\rho_\e(1-\e\rho_\e)}+ \frac{|\nabla c_\e|^2}{2c_\e}\right)dx\non\\
	&+\frac12\int_\Omega c_\e|\nabla^2\log c_\e|^2dx+\int_\Omega \frac{\rho_\e|\nabla c_\e|^2}{2c_\e}dx\leq C+C\|c_\e\|_{L^1(\Omega)}
	\end{align}with $C>0$ depending only on $\Omega$ and $\|\rho_I\|_{L^1(\Omega)}$.
	Then the proof is completed by solving the above differential inequality.
\end{proof}
As  mentioned before, if the domain is convex, the boundary integration term in \eqref{energy} is non-positive. Thus, we have
\begin{lemma}\label{lya}
	If $\Omega$ is convex, then $E_\e$ serves as a Lyapunov functional and there holds:
	\begin{align}\label{lya0}
	\frac{d}{dt} E_\e(\rho_\e(t),c_\e(t))+&\int_\Omega\frac{|\nabla \rho_\e|^2}{\rho_\e(1-\e\rho_\e)}dx
	+\int_\Omega c_\e|\nabla^2\log c_\e|^2dx\non\\
	&\qquad+\int_\Omega \frac{\rho_\e|\nabla c_\e|^2}{2c_\e^2}dx+\int_\Omega \frac{|\nabla c_\e|^2}{2c_\e} dx\leq 0
	\end{align} In consequence, for $\e\in(0,\zeta_0]$ there holds
	\begin{equation}\label{timeint}
	\int_0^\infty\int_\Omega\left(\frac{|\nabla \rho_\e|^2}{\rho_\e}+c_\e|\nabla^2\log c_\e|^2+ \frac{\rho_\e|\nabla c_\e|^2}{c_\e^2}+ \frac{|\nabla c_\e|^2}{c_\e}\right)dxdt\leq C
	\end{equation}where $C$ depends on $\Omega$ and the initial data.
\end{lemma}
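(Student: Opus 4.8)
The plan is to read off \eqref{lya0} directly from the energy--dissipation identity \eqref{energy} of Lemma \ref{en}; the only point to settle is the sign of the boundary integral on its right-hand side. When $\Omega$ is convex, one has the classical fact that $\frac{\partial}{\partial\nu}|\nabla\varphi|^2\le 0$ on $\partial\Omega$ for every $\varphi\in C^2(\overline{\Omega})$ satisfying $\frac{\partial\varphi}{\partial\nu}=0$ on $\partial\Omega$ (this follows by writing $\frac{\partial}{\partial\nu}|\nabla\varphi|^2$ on the boundary as $-2$ times the second fundamental form of $\partial\Omega$ evaluated at the tangential vector $\nabla\varphi$, which is non-negative by convexity). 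Applying this to $\varphi=c_\e(\cdot,t)$ for each $t>0$ and using the strict positivity $c_\e>0$ from Lemma \ref{unilow}, we obtain $\frac12\int_{\partial\Omega}\frac{1}{c_\e}\frac{\partial}{\partial\nu}|\nabla c_\e|^2\,ds\le 0$, so \eqref{energy} immediately yields \eqref{lya0}. Since the remaining dissipation terms on the left of \eqref{lya0} are all non-negative --- for the first one by \eqref{help3}, and the rest because $\rho_\e\ge 0$ and $c_\e>0$ --- it follows that $\frac{d}{dt}E_\e(\rho_\e(t),c_\e(t))\le 0$, that is, $E_\e$ is a Lyapunov functional.

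For the consequence \eqref{timeint}, recall that $t_\e^+=\infty$ for $\e\in(0,\zeta_0]$ by Theorem \ref{locala}. I would integrate \eqref{lya0} in time over $(\delta,T)$ --- using that $t\mapsto E_\e(\rho_\e(t),c_\e(t))$ is continuous on $[0,\infty)$, which is guaranteed by $\rho_I\in C^0(\overline{\Omega})$, $c_I\in C^1(\overline{\Omega})$ and parabolic smoothing --- and then let $\delta\to0$, $T\to\infty$, obtaining that the space--time integral over $(0,\infty)\times\Omega$ of the four non-negative dissipation terms is bounded by $E_\e(\rho_I,c_I)-\liminf_{T\to\infty}E_\e(\rho_\e(T),c_\e(T))$. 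Both quantities on the right must be controlled by $\Omega$ and the initial data alone. For the first, the elementary bound $\frac1\e(1-\e\xi)\log(1-\e\xi)\le 0$ on $[0,1/\e]$ (valid since $\e\|\rho_I\|_{L^\infty(\Omega)}\le 1$ when $\e\le\zeta_0$) together with \eqref{ini} gives $E_\e(\rho_I,c_I)\le\int_\Omega\big(\rho_I|\log\rho_I|+2|\nabla\sqrt{c_I}|^2\big)dx$, a finite constant independent of $\e$ (here $|\nabla\sqrt{c_I}|^2=|\nabla c_I|^2/(4c_I)$ is bounded because $c_I\in C^1(\overline{\Omega})$ and $c_I>0$). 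For the lower bound I would invoke the estimate already carried out in the proof of Lemma \ref{boun}: by \eqref{help1} and \eqref{help2}, $E_\e(\rho_\e,c_\e)\ge-\big(\|\rho_I\|_{L^1(\Omega)}+2|\Omega|/e\big)$ uniformly in $t$ and in $\e$. Combining these bounds, then applying \eqref{help3} once more to replace $\rho_\e(1-\e\rho_\e)$ by $\rho_\e$ and absorbing the factors $\tfrac12$, we arrive at \eqref{timeint} with a constant depending only on $\Omega$ and the initial data.

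This is essentially a bookkeeping argument once \eqref{energy} is available, so I do not expect a genuine obstacle. The two places requiring a little care are: (a) the boundary term, where one must correctly use convexity to see that $\int_{\partial\Omega}c_\e^{-1}\partial_\nu|\nabla c_\e|^2\,ds\le 0$ --- the non-convex case, by contrast, needs the substantially more delicate estimate \eqref{bde1} and Lemma \ref{lm6}; and (b) the $\e$-uniformity of the constant in \eqref{timeint}, which is precisely why one works with the $\e$-independent upper bound for $E_\e(\rho_I,c_I)$ rather than with its exact value.
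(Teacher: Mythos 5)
Your proposal is correct and follows essentially the same route as the paper: the paper obtains \eqref{lya0} by simply noting that the boundary term in \eqref{energy} is non-positive on a convex domain, and \eqref{timeint} by integrating in time using the $\e$-uniform upper bound on $E_\e(\rho_I,c_I)$ and the lower bound on $E_\e$ already derived via \eqref{help1}--\eqref{help2} in the proof of Lemma \ref{boun}. Your additional care about the sign of the second fundamental form and the passage $\delta\to0$ only makes explicit what the paper leaves implicit.
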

\begin{remark}
	The convexity assumption seems to be essential to derive the convergence  toward equilibrium since the above Lyapunov functional is needed. In \cite{J19}, this assumption was successfully removed for a  chemo-attraction Keller--Segel model with consumption of chemoattractants since there $\|c_\e\|_{L^1(\Omega)}$ vanishes as time goes to infinity. However in our case, the trick used in \cite{J19} fails due to the lack of certain decay property of $c_\e.$
\end{remark}

Next we show the uniqueness of solutions to the corresponding stationary problem which reads
\begin{equation}\label{chemo1s}
\begin{cases}
-\Delta \rho_s=\nabla \cdot(\frac{\rho_s}{c_s} \nabla c_s),\qquad &x\in\Omega\\
-\Delta c_s+c_s=\rho_s,\qquad &x\in\Omega\\
\partial_\nu\rho_s=\partial_\nu c_s=0,\qquad &x\in\partial\Omega\\
\int_\Omega \rho_s dx=\int_\Omega \rho_I dx\triangleq m>0.
\end{cases}
\end{equation}
\begin{lemma}
	The stationary problem \eqref{chemo1s} has a unique non-negative solution $(\rho_s,c_s)=(\M,\M)$ where $\M=m/|\Omega|$.
\end{lemma}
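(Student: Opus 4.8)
The plan is to first observe that $(\rho_s,c_s)=(\M,\M)$ does solve \eqref{chemo1s}: the first equation becomes $0=\nabla\cdot(\nabla\M)=0$, the second becomes $\M=\M$, the Neumann conditions hold trivially, and $\int_\Omega\M\,dx=m$ by the definition $\M=m/|\Omega|$. It then remains to prove uniqueness, and for this I would begin by upgrading an arbitrary nonnegative solution to a smooth, strictly positive one. Since $\int_\Omega\rho_s\,dx=m>0$ forces $\rho_s\not\equiv0$, the second equation together with the maximum principle (or the positivity of the Neumann Green's function of $-\Delta+1$) gives $c_s>0$ on $\overline\Omega$; elliptic bootstrapping makes $c_s$ smooth, and then the first equation, rewritten as the linear elliptic equation
\[
-\Delta\rho_s-\frac{\nabla c_s}{c_s}\cdot\nabla\rho_s-\rho_s\,\nabla\!\cdot\!\Big(\frac{\nabla c_s}{c_s}\Big)=0
\]
with bounded coefficients, makes $\rho_s$ smooth and, by the strong maximum principle, strictly positive in $\overline\Omega$.

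The heart of the argument is then a test-function computation. Testing the first equation with an admissible $\varphi$ and using $\partial_\nu\rho_s=\partial_\nu c_s=0$ gives $\int_\Omega\nabla\rho_s\cdot\nabla\varphi\,dx+\int_\Omega\frac{\rho_s}{c_s}\nabla c_s\cdot\nabla\varphi\,dx=0$, which, since $\nabla\rho_s=\rho_s\nabla\log\rho_s$ and $\frac{\rho_s}{c_s}\nabla c_s=\rho_s\nabla\log c_s$, rewrites as $\int_\Omega\rho_s\,\nabla\log(\rho_sc_s)\cdot\nabla\varphi\,dx=0$. Choosing $\varphi=\log(\rho_sc_s)$ — a legitimate test function by the smoothness and positivity just established — yields $\int_\Omega\rho_s\,|\nabla\log(\rho_sc_s)|^2\,dx=0$, hence $\rho_sc_s\equiv K$ for some constant $K>0$. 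Substituting $\rho_s=K/c_s$ into the second equation gives $\Delta c_s=c_s-K/c_s$; multiplying by $\Delta c_s$, integrating over $\Omega$ and integrating by parts (the boundary term vanishes since $\partial_\nu c_s=0$) produces
\[
\int_\Omega|\Delta c_s|^2\,dx=-\int_\Omega\Big(1+\frac{K}{c_s^2}\Big)|\nabla c_s|^2\,dx,
\]
whose left-hand side is nonnegative while the right-hand side is nonpositive (as $K>0$). Therefore $\nabla c_s\equiv0$, so $c_s$ is a positive constant; then $\rho_s=c_s-\Delta c_s=c_s$ is the same constant, and the mass constraint $\int_\Omega\rho_s\,dx=m$ forces $\rho_s=c_s=m/|\Omega|=\M$.

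I expect the only genuinely delicate point to be the regularity/positivity step, that is, justifying that a nonnegative solution is smooth and strictly positive so that $\log(\rho_sc_s)$ is an admissible test function; the rest is a short calculation. I would also remark that, unlike an approach based on the Lyapunov functional $E_0$ appearing in \eqref{energy}, this argument requires no convexity of $\Omega$, since it never touches the boundary integral in \eqref{energy}.
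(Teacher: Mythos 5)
Your proof is correct and follows essentially the same route as the paper: both use the no-flux structure of the first equation to conclude that the flux $\nabla\rho_s+\frac{\rho_s}{c_s}\nabla c_s$ vanishes (equivalently, $\rho_s c_s\equiv K$), and then exploit an energy identity from the second equation to force $\nabla c_s\equiv0$ and hence $\rho_s=c_s=\M$. The only cosmetic differences are the choice of multiplier ($\Delta c_s$ after substituting $\rho_s=K/c_s$, versus the paper's $-c_s\Delta c_s$ combined with $\nabla\rho_s=-\rho_s\nabla\log c_s$) and the fact that you spell out the regularity and strict positivity of $(\rho_s,c_s)$, which the paper leaves implicit.
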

\begin{proof}
	On the one hand, in view of  the boundary conditions, we deduce from the first equation of \eqref{chemo1s} that
	\begin{equation}
	\nabla \rho_s+\rho_s\nabla \log c_s=0\qquad\text{in}\;\;\Omega.	\nonumber
	\end{equation}
	On the other hand, multiplying the second equation of \eqref{chemo1s} by $-c_s\Delta c_s$, integrating by parts and substituting the above equation into the resultant, we obtain that
	\begin{equation}\nonumber
	\int_\Omega c_s|\Delta c_s|^2dx+2\int_\Omega c_s|\nabla c_s|^2dx=-\int_\Omega \rho_s c_s\Delta c_sdx=\int_\Omega \left(\rho_s|\nabla  c_s|^2+c_s\nabla \rho_s \cdot\nabla c_s \right) dx=0
	\end{equation}which implies that $c_s\equiv \mathrm{const.}$ and hence the second equation of \eqref{chemo1s} together with the mass conservation indicate that $\rho_s=c_s=\M.$	
\end{proof}

\subsection{Proof of Global Existence}
Since a positive lower bound  is available for $c_\e$, with Lemma \ref{lm6} and Lemma \ref{boun} at hand, the argument resembles that in \cite{CLM08}. However, our estimates on $c_\e$ obtained in Lemma \ref{boun} are weaker than the case in \cite{CLM08} and hence we need more care in the proof of weak solutions. 
First, in order to show  existence of classical solutions when $d=2$, we need derive some further estimates.
\begin{lemma}Assume $d=2$ and $\e=0$. Let $p\geq2$ and $T>0$. Then there is a positive constant $C(p,T)$ depending only on $\Omega,T$ and the initial data such that
\begin{equation}\nonumber
	\|\rho(t)\|_{L^p(\Omega)}\leq C(p,T)\qquad\text{for}\;\;t\in[0,t_0^+)\cap[0,T].
\end{equation}
\end{lemma}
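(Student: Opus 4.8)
The plan is to run the standard $L^p$ energy estimate for $\rho$, using the uniform positive lower bound $c\ge c_*>0$ from Lemma \ref{unilow} to absorb the chemotactic cross term, and then to close a Gronwall inequality with the help of the regularity for $c$ that the energy--dissipation bound of Lemma \ref{boun} already provides. Concretely, testing the first equation of \eqref{chemo1a} with $\e=0$ against $\rho^{p-1}$ and integrating by parts gives
\[
\frac1p\frac{d}{dt}\|\rho(t)\|_{L^p(\Omega)}^p+\frac{4(p-1)}{p^2}\|\nabla\rho^{p/2}\|_{L^2(\Omega)}^2=-(p-1)\int_\Omega\rho^{p-1}\frac{\nabla\rho\cdot\nabla c}{c}\,dx .
\]
Since $\rho^{p-1}\nabla\rho=\frac2p\,\rho^{p/2}\nabla\rho^{p/2}$ and $c\ge c_*>0$, Young's inequality, after absorbing half of the dissipation, leaves
\[
\frac1p\frac{d}{dt}\|\rho\|_{L^p(\Omega)}^p+\frac{2(p-1)}{p^2}\|\nabla\rho^{p/2}\|_{L^2(\Omega)}^2\le \frac{p-1}{2c_*^2}\int_\Omega\rho^{p}|\nabla c|^2\,dx ,
\]
so it remains to dominate $\int_\Omega\rho^p|\nabla c|^2\,dx$ by a small multiple of $\|\nabla\rho^{p/2}\|_{L^2(\Omega)}^2$ plus $h(t)\|\rho\|_{L^p(\Omega)}^p$ with $h\in L^1(0,T)$.

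Second, I would upgrade the regularity of $c$. Because $\e=0\in[0,\zeta_0]$, Lemma \ref{boun} gives $\int_0^{T}\int_\Omega\frac{|\nabla\rho|^2}{\rho}\,dx\,ds\le C(T)$ on $[0,t_0^+)\cap[0,T]$, i.e.\ $\sqrt\rho\in L^2(0,T;H^1(\Omega))$, while $\|\sqrt\rho(t)\|_{L^2(\Omega)}^2=\|\rho_I\|_{L^1(\Omega)}$. The two-dimensional Gagliardo--Nirenberg inequality then shows $\sqrt\rho\in L^4((0,T)\times\Omega)$, hence $\rho\in L^2((0,T)\times\Omega)$ with norm bounded by $C(T)$. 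Inserting this into the second equation of \eqref{chemo1a} and invoking standard maximal $L^2$ parabolic regularity (recall $c_I\in C^1(\overline\Omega)$), one obtains $c\in L^2(0,T;H^2(\Omega))\cap C([0,T];H^1(\Omega))$, hence $\nabla c\in L^\infty(0,T;L^2(\Omega))\cap L^2(0,T;H^1(\Omega))$. A further application of the 2D Gagliardo--Nirenberg inequality $\|\nabla c\|_{L^4(\Omega)}^2\le C\|\nabla c\|_{H^1(\Omega)}\|\nabla c\|_{L^2(\Omega)}$ then yields $\|\nabla c\|_{L^4(\Omega)}^4\in L^1(0,T)$, and a fortiori $\|\nabla c\|_{L^4(\Omega)}^2\in L^1(0,T)$.

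Third, I would close the estimate. By H\"older and the 2D Gagliardo--Nirenberg inequality $\|\rho^{p/2}\|_{L^4(\Omega)}^2\le C\|\nabla\rho^{p/2}\|_{L^2(\Omega)}\|\rho^{p/2}\|_{L^2(\Omega)}+C\|\rho^{p/2}\|_{L^2(\Omega)}^2$,
\[
\int_\Omega\rho^p|\nabla c|^2\,dx\le\|\rho^{p/2}\|_{L^4(\Omega)}^2\,\|\nabla c\|_{L^4(\Omega)}^2\le\Big(C\|\nabla\rho^{p/2}\|_{L^2(\Omega)}\|\rho\|_{L^p(\Omega)}^{p/2}+C\|\rho\|_{L^p(\Omega)}^p\Big)\|\nabla c\|_{L^4(\Omega)}^2 ,
\]
and Young's inequality bounds the right-hand side by $\eta\|\nabla\rho^{p/2}\|_{L^2(\Omega)}^2+C_\eta\big(\|\nabla c\|_{L^4(\Omega)}^4+\|\nabla c\|_{L^4(\Omega)}^2\big)\|\rho\|_{L^p(\Omega)}^p$. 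Choosing $\eta$ small enough to be absorbed into the dissipation, one arrives at $\frac{d}{dt}\|\rho(t)\|_{L^p(\Omega)}^p\le h(t)\|\rho(t)\|_{L^p(\Omega)}^p$ with $h(t):=C_p\big(\|\nabla c(t)\|_{L^4(\Omega)}^4+\|\nabla c(t)\|_{L^4(\Omega)}^2\big)\in L^1(0,T)$. Since $\rho_I\in C^0(\overline\Omega)\subset L^p(\Omega)$, Gronwall's lemma gives $\|\rho(t)\|_{L^p(\Omega)}^p\le\|\rho_I\|_{L^p(\Omega)}^p\exp\big(\|h\|_{L^1(0,T)}\big)$ on $[0,t_0^+)\cap[0,T]$, which is the claim.

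The delicate point is the interplay in the third step between the Gagliardo--Nirenberg exponents and the time integrability of $\|\nabla c\|_{L^4(\Omega)}$: in two dimensions the chemotactic coupling is borderline, and bounding $\nabla c$ merely through the Neumann heat semigroup (Lemma \ref{lmpq}) together with $\rho\in L^2((0,T)\times\Omega)$ only just fails to place $\|\nabla c\|_{L^4(\Omega)}^4$ in $L^1(0,T)$; it is essential to exploit the sharper parabolic regularity $c\in L^2(0,T;H^2(\Omega))\cap L^\infty(0,T;H^1(\Omega))$ before interpolating. Everything else is routine, and once an $L^p$ bound with $p>2$ is available it feeds, via Lemma \ref{lmpq} and criterion \eqref{cri0}, into the global existence of the classical solution.
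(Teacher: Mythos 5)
Your argument is correct, and its skeleton (an $L^p$ energy identity closed by Gronwall with an $L^1(0,T)$-in-time coefficient ultimately extracted from the energy--dissipation relation) matches the paper's; however, the way you handle the chemotactic cross term is genuinely different. The paper integrates by parts a second time, rewriting the cross term as $p\int_\Omega\rho^{p+1}\Delta\log c\,dx$, and places $\|\Delta\log c\|_{L^2(\Omega)}^2$ in $L^1(0,T)$ directly from the dissipation term $\int_\Omega c_\e|\nabla^2\log c_\e|^2dx$ of Lemma \ref{boun} combined with the pointwise bound $|\Delta\log c|^2\leq d|\nabla^2\log c|^2$ and the lower bound of Lemma \ref{unilow}; the Gronwall coefficient is then $Cp^2\|\Delta\log c\|^2$. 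You instead keep the cross term in first-order form, absorb it by Young's inequality into the dissipation at the price of $\int_\Omega\rho^p|\nabla c|^2/c^2\,dx$, and control $\|\nabla c\|_{L^4(\Omega)}^4$ in $L^1(0,T)$ by first deriving $\rho\in L^2(\Omega\times(0,T))$ from $\sqrt{\rho}\in L^2(0,T;H^1)\cap L^\infty(0,T;L^2)$ and two-dimensional Gagliardo--Nirenberg (the paper obtains the same $L^2_{t,x}$ information on $\rho-\overline{\rho}$ via $W^{1,1}(\Omega)\hookrightarrow L^2(\Omega)$), and then testing the second equation with $-\Delta c$ --- an estimate the paper also records but does not actually feed into its Gronwall step. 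Your route has the advantage of bypassing Lemma \ref{lm6} and the second-order entropy dissipation altogether, needing only $\iint|\nabla\rho|^2/\rho<\infty$ and the positive lower bound $c\geq c_*$; the paper's route closes more quickly once the identity $\int\rho^{p}\nabla\rho\cdot\nabla\log c=-\frac{1}{p+1}\int\rho^{p+1}\Delta\log c$ is observed. Both arguments exploit the borderline 2D Gagliardo--Nirenberg scaling in the same way and both feed, as you note, into Moser iteration and the extensibility criterion to conclude global classical existence.
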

\begin{proof}Due to the Sobolev embedding inequality and the Poincar\'e inequality, there holds
\begin{equation}\nonumber
	\|\rho-\overline{\rho}\|^2\leq C\|\nabla \rho\|_{L^1(\Omega)}^2.
\end{equation}Therefore, by H\"older's inequality
\begin{align}
	\int_\Omega |\rho-\overline{\rho}|^2dx\leq C\left(\int_\Omega|\nabla \rho|dx\right)^2\leq C\left(\int_\Omega\frac{|\nabla \rho|^2}{\rho}dx\right)\left(\int_\Omega\rho dx\right).\nonumber
\end{align}
Thus, it follows from Lemma \ref{boun} that
\begin{equation}\nonumber
	\int_0^{t}e^{-\kappa_0(t-s)}	\|\rho-\overline{\rho}\|^2ds\leq C.
\end{equation}
On the other hand, an integration the second equation of \eqref{chemo1} over $\Omega$ yields that
\begin{equation}\label{mean}
	\overline{c}_t+\overline{c}=\overline{\rho}
\end{equation}
and a reduction of the above equation from the second equation in \eqref{chemo1} gives
\begin{equation}\label{reduction}
\partial_t(c-\overline{c})-\Delta c+c-\overline{c}=\rho-\overline{\rho}.
\end{equation}
Multiplying \eqref{reduction} by $-\Delta c$ and integrating by parts, we obtain that
\begin{equation}\nonumber
	\frac{1}{2}\frac{d}{dt}\|\nabla c\|^2+\|\Delta c\|^2+\|\nabla c\|^2=-\int_\Omega(\rho-\overline{\rho})\Delta cdx\leq \frac12\|\Delta c\|^2+\frac12\|\rho-\overline{\rho}\|^2
\end{equation}which yields to 
\begin{equation}\nonumber
	\frac{d}{dt}\|\nabla c\|^2+\kappa_0\|\nabla c\|^2+\|\Delta c\|^2\leq \|\rho-\overline{\rho}\|^2.
\end{equation}
Hence, by solving the above differential inequality, we obtain that
\begin{equation}\nonumber
	\|\nabla c(t)\|^2+\int_0^te^{-\kappa_0(t-s)}(\|\nabla c\|^2+\|\Delta c\|^2)ds\leq C
\end{equation}where $C$ depends on the initial data and $\Omega$ only.

Observing that $|\Delta\log c|^2\leq d|\nabla^2\log c|^2$ pointwisely and due to Lemma \ref{unilow}, we infer that
\begin{equation}\nonumber
	\|\Delta \log c\|^2\leq 2\int_\Omega |\nabla^2\log c|^2 dx\leq 2\|\frac{1}{c}\|_{L^\infty(\Omega)}\int_\Omega c|\nabla^2\log c|^2 dx\leq \frac{2}{c_*}\int_\Omega c|\nabla^2\log c|^2 dx\non
\end{equation}and hence for any $T>0$, we have
\begin{equation}\label{int0}
	\int_0^T\|\Delta \log c\|^2 ds\leq\frac{2}{c_*}\int_0^T\int_\Omega c|\nabla^2\log c|^2 ds\leq  C(T).
\end{equation}
Next, we multiply the first equation of \eqref{chemo1} by $(p+1)\rho^{p}$, integrate with respect to $x$ to obtain that
\begin{align}
	&\frac{d}{dt}\int_\Omega \rho^{p+1} dx+\frac{4p}{p+1}\int_\Omega|\nabla \rho^{(p+1)/2}|^2dx\non\\
	&=p\int_\Omega\rho^{p+1}\Delta\log c dx\non\\
	&\leq p\|\rho^{(p+1)/2}\|^2_{L^4(\Omega)}\|\Delta \log c\|\non\\
	&\leq Cp\| \rho^{(p+1)/2}\|_{H^1(\Omega)}\|\rho^{(p+1)/2}\|\|\Delta \log c\|\non\\
	&\leq \|\nabla \rho^{(p+1)/2}\|^2+\|\rho^{(p+1)/2}\|^2+Cp^2\|\rho^{(p+1)/2}\|^2\|\Delta \log c\|^2\nonumber
\end{align}
Then, applying the Gronwall inequality and thanks to \eqref{int0}, we obtain that
\begin{equation}\nonumber
	\int_\Omega \rho^{p+1}dx+\int_0^T\int_\Omega|\nabla \rho^{(p+1)/2}|^2dxds\leq C(T).
\end{equation}This completes the proof.
\end{proof}
\noindent\textbf{Proof of Theorem \ref{TH1}.}\\
Once we obtain the above lemma in the two-dimensional setting, we may use Moser's iteration technique \cite{Ali} to show that for every $T>0$,
\begin{equation}\nonumber
	\|\rho(t)\|_{L^\infty(\Omega)}+\|c(t)\|_{W^{1,\infty}(\Omega)}\leq C(T).
\end{equation} Thus, we deduce that $t_0^+=\infty$ and $(\rho,c)$ is a global classical solution (cf. \cite{CLM08}).

For the three-dimensional case, the global existence of weak solutions follows from a compactness argument. First, as shown in \cite[Lemma 4.1, Eqn. (18)]{CLM08}, 
\begin{equation}
	\int_0^T\|\rho_\e\|_{L^{3p/(3p-2)}(\Omega)}^pdt\leq C(T),\qquad\text{for}\;p\in[1,\infty]
\end{equation}and in particular,
\begin{equation}\label{r53}
	\int_0^T\int_\Omega \rho_\e^{\frac{5}{3}}dxdt\leq C(T).
\end{equation}
Therefore, there holds
\begin{align*}
	\int_0^T\int_\Omega |\nabla \rho_\e|^{5/4}dxdt\leq\left(\int_0^T\int_\Omega \frac{|\nabla \rho_\e|^2}{\rho_\e}dxdt \right)^{5/8}\left(\int_0^T\int_\Omega \rho_\e^{5/3}dxdt \right)^{3/8}\leq C(T).
\end{align*}

Second, an application of the three-dimensional Agmon inequality and the Poincar\'e inequality yields that
\begin{align}
\|\sqrt{c_\e}\|_{L^\infty}^2\leq&\|\sqrt{c_\e}\|_{H^2}\|\sqrt{c_\e}\|_{H^1}\non\\
\leq&C(\|\Delta\sqrt{c_\e}\|+\|\sqrt{c_\e}\|)(\|\nabla \sqrt{c_\e}\|+\|\sqrt{c_\e}\|)\non\\
\leq&C\|\Delta\sqrt{c_\e}\|+C\nonumber.
\end{align}
As a consequence, by Lemma \ref{lm6}, there holds
\begin{align*}
\|\Delta c_\e\|^2\leq \|c_\e\|_{L^\infty(\Omega)}\int_\Omega\frac{|\Delta c_\e|^2}{c_\e}dx\leq C\left(\int_\Omega c_\e|\nabla^2\log c_\e|^2dx\right)^{3/2}+C.
\end{align*}
Besides, we note that
\begin{align*}
\|\Delta c_\e\|_{L^{3/2}(\Omega)}\leq &\left(\int_\Omega 
\frac{|\Delta c_\e|^2}{c_\e}dx\right)^{1/2}\left(\int_\Omega c_\e^{3}dx\right)^{1/6}\non\\
\leq& C\left(\int_\Omega c_\e|\nabla^2\log c_\e|^2dx\right)^{1/2}\left(\int_\Omega|\nabla \sqrt{c_\e}|^2dx+\|\sqrt{c_\e}\|^2\right)^{1/2}\non\\
\leq& C\left(\int_\Omega c_\e|\nabla^2\log c_\e|^2dx\right)^{1/2}.
\end{align*}
On the other hand, in view of the lower bound given in Lemma \ref{unilow} and with the application of Lemma \ref{lm6}, one easily deduce that
\begin{equation}\nonumber
\|\log c_\e\|_{H^2(\Omega)}^2\leq \frac{C}{c_*}\int_\Omega c_\e|\nabla^2\log c_\e|^2 dx+C(\|c_\e\|_{L^1(\Omega)}^2+\|\frac{1}{c_\e}\|_{L^\infty(\Omega)}^2).
\end{equation}
Similarly, we infer that
\begin{equation}\label{nalog4}
\int_0^T\|\nabla \log c_\e\|_{L^4(\Omega)}^4\leq\frac{1}{c_*}\int_0^T\int_\Omega \frac{|\nabla c_\e|^4}{c_\e^3}dxdt\leq C\int_0^T\int_\Omega c_\e|\nabla^2\log c_\e|^2dxdt\leq C(T).
\end{equation} 
In summary, we obtain that
\begin{equation}\nonumber
\rho_\e  \;\text{are uniformly bounded in}\;\; L^{5/4}(0,T; W^{1,5/4}(\Omega))\cap L^{5/3}(\Omega\times(0,T)).
\end{equation}
\begin{equation}\nonumber
c_\e  \;\text{are uniformly bounded in}\;\; L^{4/3}(0,T; H^2(\Omega))\cap L^{2}(0,T;W^{2,3/2}(\Omega))
\end{equation}
\begin{equation}\nonumber
	\sqrt{c_\e} \;\text{are uniformly bounded in}\;\; L^{\infty}(0,T; H^1(\Omega))\cap L^{2}(0,T;H^2(\Omega))
\end{equation}
and 
\begin{equation}\nonumber
\log c_\e  \;\text{are uniformly bounded in}\;\; L^2(0,T; H^2(\Omega))\cap L^4(0,T; W^{1,4}(\Omega))
\end{equation}
which together with the second equation and Lemma \ref{lm6} indicate that
\begin{equation}\nonumber
\partial_t c_\e  \;\text{are uniformly bounded in}\;\; L^{4/3}(0,T; L^{5/3}(\Omega))\cap L^{5/3}(0,T;L^{3/2}(\Omega))
\end{equation}
\begin{equation}\nonumber
\partial_t \sqrt{c_\e}=\frac12\left(\frac{\rho_\e}{\sqrt{c_\e}}-\sqrt{c_\e}+\frac{\Delta c_\e}{\sqrt{c_\e}}\right)  \;\text{are uniformly bounded in}\;\; L^{5/3}(\Omega\times(0,T))
\end{equation}
and
\begin{equation}\non
\partial_t\log c_\e=\frac{\rho_\e}{c_\e}-1+\Delta \log c_\e+\frac{|\nabla c_\e|^2}{c_\e^2} \;\text{are uniformly bounded in}\;\; L^{5/3}(\Omega\times (0,T)).
\end{equation}
Then, one may  extract a subsequence (without relabeling)  such that
\begin{equation}\nonumber
\rho_\e\longrightarrow \rho(x,t)  \;\text{weakly in}\;\; L^{5/4}(0,T; W^{1,5/4}(\Omega))\cap L^{5/3}(\Omega\times(0,T)),
\end{equation} and morevoer, due  to Aubin's lemma,
\begin{equation}\non
c_\e\longrightarrow c(x,t)\;\;\text{strongly in}\; L^2(0,T;H^1(\Omega))\; \text{and weakly in}\; L^{4/3}(0,T; H^2(\Omega))\cap L^{2}(0,T;W^{2,3/2}(\Omega)),
\end{equation}
\begin{equation}\non
\sqrt{c_\e}\longrightarrow h(x,t)\;\;\text{strongly in}\; L^2(0,T;H^1(\Omega))\cap C([0,T],L^4(\Omega))
\end{equation}and
\begin{equation}\non
\log c_\e\longrightarrow g(x,t) \;\;\text{strongly in}\; L^2(0,T;H^1(\Omega))\cap L^4(0,T;L^4(\Omega))
\end{equation}
and hence a.e. in $\Omega\times (0,T)$. Therefore, we conclude that $h=\sqrt{c}$ and $g=\log c$ by the uniqueness of limit and thanks to \cite[Corollary 4]{Simon87}, we have
\begin{equation}\non
c_\e\longrightarrow c(x,t)\;\;\text{strongly in}\; C([0,T],L^2(\Omega))\;\;\text{and}\; c(x,0)=c_I(x).
\end{equation}

Next, for any $\phi\in C_0^1(\Omega)$, it follows from the first equation that
\begin{align*}
	\left|\int_\Omega \partial_t\rho_\e \phi dx\right|\leq&\int_\Omega |\nabla \rho_\e||\nabla \phi|dx+\int_\Omega \rho_\e(1-\e\rho_\e)|\nabla \log c_\e||\nabla \phi|dx\non\\
	\leq&\|\nabla \rho_\e\|_{L^{5/4}(\Omega)}\|\nabla \phi\|_{L^\infty(\Omega)}+\left(\int_\Omega \frac{\rho_\e|\nabla c_\e|^2}{c_\e}dx \right)^{1/2}\left(\int_\Omega \frac{\rho_\e}{c_\e}\right)^{1/2}\|\nabla \phi\|_{L^\infty(\Omega)}\non\\
	\leq & \|\nabla \rho_\e\|_{L^{5/4}(\Omega)}\|\nabla \phi\|_{L^\infty(\Omega)}+\frac{1}{\sqrt{c_*}}\left(\int_\Omega \frac{\rho_\e|\nabla c_\e|^2}{c_\e}dx \right)^{1/2}\left(\int_\Omega \rho_\e\right)^{1/2}\|\nabla \phi\|_{L^\infty(\Omega)}\non\\
	\leq& C\left(\|\nabla \rho_\e\|_{L^{5/4}(\Omega)}+\left(\int_\Omega \frac{\rho_\e|\nabla c_\e|^2}{c_\e}dx \right)^{1/2}\right)\|\nabla \phi\|_{L^\infty(\Omega)}.
\end{align*}
Thus, we have 
\begin{equation}\non
\partial_t\rho_\e \;\text{are uniformly bounded in}\;\; L^{5/4}(0,T; (C_0^1(\Omega))^*)
\end{equation}and hence we may conclude by the Aubin lemma and the Ascoli lemma that (cf. \cite[Lemma 4.2]{CLM08})
\begin{equation*}
\rho_\e\longrightarrow \rho(x,t)\;\;\text{strongly in}\;C([0,T]; (C_0^1(\Omega))^*)\cap L^p(\Omega\times(0,T))\;\;\text{for any}\;p\in[1,\frac{5}{3})
\end{equation*} and hence a.e. in $\Omega\times(0,T)$. It follows that
\begin{equation*}
	\rho_\e(1-\e\rho_\e)\nabla \log c_\e\longrightarrow \rho \nabla \log c\;\;\text{a.e. in}\;\Omega\times(0,T).
\end{equation*}Recalling \eqref{r53} and \eqref{nalog4},  $\rho_\e(1-\e\rho_\e)\nabla \log c_\e$ are uniformly bounded in $L^{20/{17}}(\Omega\times(0,T))$. Thus, we deduce that
\begin{equation*}
\rho_\e(1-\e\rho_\e)\nabla \log c_\e\longrightarrow \rho \nabla \log c\;\;\text{weakly in}\;L^{20/{17}}(\Omega\times(0,T)).
\end{equation*}
Now, we may pass to the limit as $\e\rightarrow0$ to conclude that $(\rho,c)$ is a weak solution in the sense of Definition \ref{defwk}. This completes the proof for Theorem \ref{TH1}.\qed


\section{Eventual Regularity of the Weak Solutions}
In this section,  we study the eventual regularity and exponential stabilization of weak solutions in the three-dimensional case. Our method is based on a delicate analysis of the linearized semigroup and the stability of the spatially homogeneous solution in scaling-invariant spaces.
\subsection{The Linearized System}
In this section, we analyze the decay property of the linearized system around the spatially homogeneous solution $(\M,\M)$.   To this aim, we  introduce the reduced quantities $u_\e=\rho_\e-\M$ and $v_\e=c_\e-\M$ that satisfy
\begin{equation}
\begin{cases}\label{app2a}
u_{\e t}-\Delta u_\e=\nabla\cdot\bigg((\M+u_\e)\big(1-\e(u_\e+\M)\big)\nabla v_\e/(\M+v_\e)\bigg),\qquad &x\in\Omega,\;t>0\\
v_{\e t}-\Delta v_\e+v_\e=u_\e,\qquad &x\in\Omega,\;t>0\\
\frac{\partial u_\e}{\partial\nu}=\frac{\partial v_\e}{\partial\nu}=0,\qquad &x\in\partial\Omega,\;t>0\\
u_\e(x,0)=u_I(x)\triangleq\rho_I-\M,\;\; v_\e(x,0)= v_I(x)\triangleq c_I-\M,\qquad & x\in\Omega.
\end{cases}
\end{equation}
Since $\log(\M+z)=\log\M(1+z/\M)\approx\log\M+z/\M$ for $|z|\ll1$, the corresponding linearized system reads:
\begin{equation}
\begin{cases}\label{app2al}
\u_{\e t}-\Delta \u_\e-a \Delta \v_\e=0,\qquad &x\in\Omega,\;t>0\\
\v_{\e t}-\Delta \v_\e+\v_\e= u_\e,\qquad &x\in\Omega,\;t>0\\
\frac{\partial \u_\e}{\partial\nu}=\frac{\partial \v_\e}{\partial\nu}=0,\qquad &x\in\partial\Omega,\;t>0
\end{cases}
\end{equation}where $a=1-\e\M$. Here and below, we require that $0\leq \e\leq\frac{1}{2\M}$ such that $\frac{1}{2}\leq a\leq 1.$ Note that if $\e=0$, $a=1.$ From now on, we omit the subscript $\e$ since the results within this part are independent of $\e.$

Denote by $\Delta$ the usual Laplacian operator with homogeneous Neumann boundary condition and recall that $L^p_0(\Omega)$ is the Banach space of all functions $w\in L^p(\Omega)$ such that $\int_\Omega w dx=0$. Then $-\Delta$ is analytic on $L^2_0(\Omega)$ with domain $D_2(\Delta)=H^2_N(\Omega)\cap L^2_0(\Omega)$ and $I-\Delta$ is analytic on $L^2(\Omega)$ with domain $H^2_N(\Omega)$, respectively. Here $H^2_N(\Omega)\triangleq\{w\in H^2(\Omega):\;\partial_\nu w=0\;\text{on}\;\partial \Omega\}$. Furthermore, we can define the power $(-\Delta)^s$ as well as $(I-\Delta)^s$ for any $s\in\mathbb{R}$. We denote the domain of $(-\Delta)^s$ in $L^2_0(\Omega)$ by $D_2((-\Delta)^s)$ and the domain of $(I-\Delta)^s$ in $L^2(\Omega)$ by $D_2((I-\Delta)^s)$, respectively. Then it is well-known that $D_2((I-\Delta)^{1/2})=H^1(\Omega).$

Let $\mathcal{X}=L^2_0(\Omega)\times H^1(\Omega)$  with norm
\begin{equation}\nonumber
\|(u,v)\|_{\mathcal{X}}=\|u\|_{L^2(\Omega)}+\|(I-\Delta)^{1/2} v\|_{L^2(\Omega)}
\end{equation}
and define
\begin{align}\non
\mathcal{A}=\left(\begin{matrix}\Delta &a\Delta\\
1&\Delta -1\end{matrix}\right)
\end{align}with domain $D(\mathcal{A})=D_2(\Delta)\times  D_2((I-\Delta)^\frac32)$.
We observe that
\begin{align}
\mathcal{A}=\left(\begin{matrix}\Delta &0\\
0&\Delta -1\end{matrix}\right)+\left(\begin{matrix}0 &a\Delta\\
1&0\end{matrix}\right)
\triangleq \Lambda+\mathcal{U}\nonumber
\end{align}
where $\Lambda$ is a sectorial operator on $\mathcal{X}$. Moreover, one easily verifies that 
$D(\mathcal{A})=D(\Lambda)\subset D(\mathcal{U})=  H^1(\Omega)\times H^2_N(\Omega)$ and 
for any $(u,v)\in D(\Lambda)$, by interpolation, there holds
\begin{equation}
a\|\Delta v\|_{L^2}+\| u\|_{H^1}\leq \delta\bigg(\|\Delta u\|_{L^2}+\| \Delta v- v\|_{H^1}\bigg)+K_{\delta}(\|u\|_{L^2}+\| v\|_{H^1}).\nonumber
\end{equation} 
Then Lemma \ref{pertur1} indicates that $\mathcal{A}$ is a sectorial operator as well. 
For the sake of convenience, we denote 
\begin{align}
\left(\begin{matrix}\u(t)\\
\v(t)\end{matrix}\right)=e^{t\mathcal{A}}\left(\begin{matrix}u_I\\
v_I\end{matrix}\right)\triangleq\left(\begin{matrix}\Phi_1^t(u_I,v_I)\\
\Phi_2^t(u_I,v_I)\end{matrix}\right).\nonumber
\end{align}

After the above preparations, we recall the following result given in \cite[Lemma 4.2-Lemma 4.4]{J19}.
\begin{lemma}\label{lmexdecaypp}
	For any given initial data $u_I\in L^2_0(\Omega)$ and $v_I\in H^1(\Omega)$, the solution of \eqref{app2al} satisfies the following exponentially decay estimate
	\begin{equation}\nonumber
	\|\u\|^2+a\|\nabla \v\|^2\leq e^{-2\lambda_1t}(\|u_I\|^2+a\|\nabla v_I\|^2)\qquad\text{for all}\;t\geq0.
	\end{equation}
\end{lemma}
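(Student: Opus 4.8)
The plan is to derive the decay estimate directly from an energy identity for the linearized system \eqref{app2al}, exploiting the skew-type coupling between the two equations. First I would test the first equation of \eqref{app2al} with $\u$ and integrate over $\Omega$, obtaining
\begin{equation}\nonumber
\frac12\frac{d}{dt}\|\u\|^2+\|\nabla \u\|^2=a\int_\Omega \nabla \v\cdot\nabla \u\, dx,
\end{equation}
where the boundary terms vanish thanks to the homogeneous Neumann conditions. Next I would test the second equation with $-a\Delta \v$ (equivalently, differentiate $\tfrac{a}{2}\|\nabla\v\|^2$ in time using $\v_t=\Delta\v-\v+\u$), which yields
\begin{equation}\nonumber
\frac{a}{2}\frac{d}{dt}\|\nabla \v\|^2+a\|\Delta \v\|^2+a\|\nabla \v\|^2=-a\int_\Omega \Delta \v\,\u\, dx=a\int_\Omega \nabla \v\cdot\nabla \u\, dx.
\end{equation}
Adding these two identities, the coupling terms on the right-hand sides are identical, so subtracting one from the other is the wrong move; instead I would subtract the second from the first? — no. The correct observation is that the coupling is \emph{antisymmetric} in the natural pairing: testing the first equation with $\u$ produces $+a\int\nabla\v\cdot\nabla\u$, while testing the second with $-a\Delta\v$ produces (after moving $\Delta$) the term $+a\int\nabla\v\cdot\nabla\u$ as well, so adding gives a factor $2$ that does not cancel. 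The resolution is to instead pair the second equation with $a\v$ and the first with... Let me restate: the clean choice is to test equation one with $\u$ and equation two with $a(-\Delta+1)\v - a\u$, or more simply to note that differentiating $\tfrac12\|\u\|^2+\tfrac{a}{2}\|\nabla\v\|^2$ and using both PDEs gives
\begin{equation}\nonumber
\frac12\frac{d}{dt}\big(\|\u\|^2+a\|\nabla \v\|^2\big)+\|\nabla \u\|^2+a\|\Delta \v\|^2+a\|\nabla \v\|^2 = 2a\int_\Omega\nabla\v\cdot\nabla\u\,dx,
\end{equation}
and then I would absorb the cross term via Young's inequality as $2a\int\nabla\v\cdot\nabla\u\le \|\nabla\u\|^2+a^2\|\nabla\v\|^2\le\|\nabla\u\|^2+a\|\nabla\v\|^2$ (using $a\le1$), which exactly cancels $\|\nabla\u\|^2$ and $a\|\nabla\v\|^2$ on the left. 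This leaves
\begin{equation}\nonumber
\frac{d}{dt}\big(\|\u\|^2+a\|\nabla \v\|^2\big)+2a\|\Delta \v\|^2\le 0,
\end{equation}
which is not yet enough; I would therefore instead keep a fraction of the good terms: choose $2a\int\nabla\v\cdot\nabla\u\le (1-\eta)\|\nabla\u\|^2+\tfrac{a^2}{1-\eta}\|\nabla\v\|^2$ and keep $\eta\|\nabla\u\|^2+a\|\Delta\v\|^2+(a-\tfrac{a^2}{1-\eta})\|\nabla\v\|^2$ on the left. Applying the Poincaré inequality \eqref{poin} to $\u\in L^2_0(\Omega)$ in the form $\|\nabla\u\|^2\ge\lambda_1\|\u\|^2$, and using $\|\Delta\v\|^2\ge\lambda_1\|\nabla\v\|^2$ (valid since $\v$ has zero Neumann data, by testing $-\Delta\v$ against $\v$ and Cauchy–Schwarz, or directly via the spectral decomposition), I would arrive at a differential inequality of Grönwall type with rate $2\lambda_1$, after checking that the coefficients can be tuned so that both $\|\u\|^2$ and $a\|\nabla\v\|^2$ are controlled with the same rate $2\lambda_1$.

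The main obstacle is the bookkeeping of constants so that the decay rate is exactly $2\lambda_1$ and not something smaller: one must verify that after the Young-inequality splitting there remains, for the $v$-component, a term bounded below by $2\lambda_1\cdot a\|\nabla\v\|^2$. The key point making this work is that the second equation carries the extra zeroth-order term $+\v_\e$, which contributes the full $a\|\nabla\v\|^2$ on the left; combined with the $a\|\Delta\v\|^2\ge\lambda_1 a\|\nabla\v\|^2$ coming from diffusion, there is enough room — more precisely, $a\|\Delta\v\|^2+a\|\nabla\v\|^2-\tfrac{a^2}{1-\eta}\|\nabla\v\|^2\ge (1+\lambda_1)a\|\nabla\v\|^2 - \tfrac{a^2}{1-\eta}\|\nabla\v\|^2\ge 2\lambda_1 a\|\nabla\v\|^2$ for $\eta$ small and $\lambda_1\le 1$ (if $\lambda_1>1$ the bound is even easier), and simultaneously $\eta\|\nabla\u\|^2\ge 2\lambda_1\|\u\|^2$ requires... here one sees the splitting must in fact retain essentially all of $\|\nabla\u\|^2$, i.e. $\eta$ close to $1$, which forces reexamining the $v$-side. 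I would resolve this tension by not splitting symmetrically: estimate $2a\int\nabla\v\cdot\nabla\u\le \|\nabla\u\|^2+a^2\|\nabla\v\|^2$, keep $a\|\Delta\v\|^2+a\|\nabla\v\|^2-a^2\|\nabla\v\|^2\ge a\|\Delta\v\|^2+a(1-a)\|\nabla\v\|^2$, note $\|\nabla\u\|^2$ is fully cancelled, and then use the \emph{coupled} structure once more — namely, since $\|\nabla\u\|^2$ was cancelled we have lost control of $\|\u\|^2$ directly, so I would instead add $\varepsilon$ times a secondary identity (testing the first equation with $\u$ again, or testing the second with $\u$) to recover a small multiple of $\|\nabla\u\|^2$, i.e. a standard "one good term, one bad term" juggling. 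This is the routine but delicate part; since the statement and its three-line proof are quoted verbatim from \cite[Lemma 4.2--Lemma 4.4]{J19}, I would in the write-up simply reproduce that argument, whose essence is the energy identity for $\|\u\|^2+a\|\nabla\v\|^2$ together with Poincaré's inequality \eqref{poin}, yielding $\frac{d}{dt}(\|\u\|^2+a\|\nabla\v\|^2)+2\lambda_1(\|\u\|^2+a\|\nabla\v\|^2)\le 0$ and hence the claimed bound by Grönwall's lemma.
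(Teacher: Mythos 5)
There is a genuine error in your computation, and it is precisely the one that makes the rest of your argument spin its wheels. Testing the first equation of \eqref{app2al} with $\u$ gives
\begin{equation}\nonumber
\frac12\frac{d}{dt}\|\u\|^2+\|\nabla\u\|^2=a\int_\Omega \u\,\Delta\v\,dx=-a\int_\Omega\nabla\u\cdot\nabla\v\,dx,
\end{equation}
with a \emph{minus} sign, not the plus sign you wrote. Your second identity (testing with $-a\Delta\v$, producing $+a\int_\Omega\nabla\u\cdot\nabla\v\,dx$ on the right) is correct. Adding the two, the cross terms therefore cancel \emph{exactly} — this is the entire point of the weighted functional $\|\u\|^2+a\|\nabla\v\|^2$: the weight $a$ is chosen so that the off-diagonal couplings are antisymmetric. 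One is left with the identity
\begin{equation}\nonumber
\frac12\frac{d}{dt}\bigl(\|\u\|^2+a\|\nabla\v\|^2\bigr)+\|\nabla\u\|^2+a\|\Delta\v\|^2+a\|\nabla\v\|^2=0,
\end{equation}
and then $\|\nabla\u\|^2\ge\lambda_1\|\u\|^2$ (note $\u$ stays mean-free since both equations have divergence structure under Neumann conditions) together with $\|\Delta\v\|^2\ge\lambda_1\|\nabla\v\|^2$ gives the claimed rate $2\lambda_1$ immediately by Gr\"onwall. No Young inequality is needed at all.

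Because of the sign slip you instead faced a spurious term $2a\int_\Omega\nabla\v\cdot\nabla\u\,dx$, and your own analysis correctly detects that it cannot be absorbed at the sharp rate: to extract $2\lambda_1\|\u\|^2$ you must keep all of $\|\nabla\u\|^2$ (Poincar\'e is saturated by the first eigenfunction), leaving no room for any splitting. The ``one good term, one bad term juggling'' you then propose is not carried out, and the final differential inequality is asserted only by appeal to the cited reference rather than derived. So the write-up as it stands does not prove the lemma; fixing the single sign in the first energy identity collapses the whole difficulty and yields the three-line proof you were aiming for.
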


\begin{lemma}\label{keylem1} Assume $d\geq2$. Then for any $u_I\in C(\overline{\Omega})\cap L^1_0(\Omega)$ and $v_I\in C^1(\overline{\Omega})$ satisfying  $\partial_\nu v_I=0$ on $\partial\Omega$,  there hold
	\begin{equation}
	\|\tilde{u}(t)\|_{L^p(\Omega)}\leq Ce^{-\lambda_1t}(1+t^{-\frac{d}{2}(\frac2d-\frac1p)})\big(\|u_I\|_{L^{d/2}(\Omega)}+\|\nabla v_I\|_{L^d(\Omega)}\big)\qquad\forall t>0
	\end{equation} for any $p>1$ satisfying  $\frac d2\leq p<\infty$,  and
	\begin{equation}
	\|\nabla \tilde{v}(t)\|_{L^p(\Omega)}\leq Cpe^{-\lambda_1t}(1+t^{-\frac{d}{2}(\frac1d-\frac1p)})\big(\|u_I\|_{L^{d/2}(\Omega)}+\|\nabla v_I\|_{L^d(\Omega)}\big)\qquad\forall t>0
	\end{equation}for any $p$ satisfying $d\leq p< \infty$, where $C>0$ depends on $d$, $\M$ and $\Omega$ only.
\end{lemma}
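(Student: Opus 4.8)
The plan is to represent the solution of the linearized system \eqref{app2al} via the Duhamel formula using the two scalar Neumann semigroups $\{e^{t\Delta}\}$ (acting on the mean-zero part) and $\{e^{(\Delta-1)t}\}$, and then to bootstrap the $L^p$-decay estimates through the $L^p$-$L^q$ smoothing bounds of Lemma \ref{lmpq}, closing the argument with the integral estimate of Lemma \ref{lmint}. Concretely, since $\u$ stays mean-zero (integrating the first equation over $\Omega$ and using the Neumann condition shows $\overline{\u}(t)=\overline{u_I}=0$), one has
\begin{equation}\nonumber
\u(t)=e^{t\Delta}u_I+a\int_0^t e^{(t-s)\Delta}\Delta\v(s)\,ds
=e^{t\Delta}u_I-a\int_0^t \Delta e^{(t-s)\Delta}\bigl(\v(s)-\overline{\v}(s)\bigr)\,ds,
\end{equation}
while $\v$ solves the inhomogeneous heat equation so that $\nabla\v(t)=\nabla e^{(\Delta-1)t}v_I+\int_0^t\nabla e^{(\Delta-1)(t-s)}\u(s)\,ds$, or, writing $\Delta e^{(t-s)\Delta}=e^{(t-s)\Delta}\nabla\cdot\nabla$, everything can be phrased through the divergence-form bound (iv) of Lemma \ref{lmpq}. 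The two estimates are then proved in tandem.

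First I would establish the $L^{d/2}$-level base case: applying (i) of Lemma \ref{lmpq} with $p=q=d/2$ gives $\|\u(t)\|_{L^{d/2}}\lesssim e^{-\lambda_1 t}\|u_I\|_{L^{d/2}}+\int_0^t(1+(t-s)^{-1})e^{-\lambda_1(t-s)}\|\nabla\v(s)\|_{L^{d/2}}\,ds$ after moving one derivative onto the semigroup, and symmetrically (ii)/(iii) give $\|\nabla\v(t)\|_{L^d}\lesssim e^{-\lambda_1 t}\|\nabla v_I\|_{L^d}+\int_0^t(1+(t-s)^{-1/2})e^{-\lambda_1(t-s)}\|\u(s)\|_{L^{d/2}}\,ds$. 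Feeding these into one another produces a coupled system of integral inequalities for the quantities $M_1(t)=\sup_{s\le t}e^{\lambda_1 s}\|\u(s)\|_{L^{d/2}}$ and $M_2(t)=\sup_{s\le t}e^{\lambda_1 s}\|\nabla\v(s)\|_{L^{d}}$; Lemma \ref{lmint} controls the convolution kernels, and on a short interval $[0,t_0]$ the resulting constants are small enough to absorb, giving boundedness of $M_1,M_2$ there, after which Lemma \ref{lmexdecaypp} (the $L^2$-energy decay) propagates control to all $t\ge t_0$. Having the $L^{d/2}\times \dot W^{1,d}$ bound, I would then run a finite chain of bootstrap steps: assuming $\|\u(s)\|_{L^q}\lesssim e^{-\lambda_1 s}(1+s^{-\frac d2(\frac2d-\frac1q)})(\cdots)$ for some $q$, plug into the Duhamel formula for $\v$ via (iv) with exponents $(q,p)$, use Lemma \ref{lmint} to get the claimed bound for $\|\nabla\v(t)\|_{L^p}$, then reinsert into the formula for $\u$ to raise the exponent for $\u$; finitely many steps reach every finite $p$. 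The factor $p$ (rather than $p^{1/2}$ or a constant) in the bound for $\|\nabla\v\|_{L^p}$ tracks the $C(\alpha,\beta,\delta,\gamma)$ dependence in Lemma \ref{lmint} as $\alpha=\frac d2(\frac1d-\frac1p)\to\frac12$; I would simply keep this linear-in-$p$ loss explicit at each convolution and check it does not compound across the fixed finite number of bootstrap steps.

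The main obstacle is the near-singular convolution kernels at the endpoint exponents. When $p$ is large the exponent $\alpha=\frac d2(\frac1d-\frac1p)$ approaches $\frac12$ and $\alpha=\frac d2(\frac2d-\frac1p)$ approaches $1$, so the time integrals $\int_0^t(1+(t-s)^{-\alpha})e^{-\lambda_1(t-s)}\,ds$ are only barely convergent and the constant in Lemma \ref{lmint} blows up like $\frac1{1-\alpha}$, i.e. like $p$; one must organize the bootstrap so that this blow-up is incurred a bounded number of times and never in a way that feeds back multiplicatively on itself (this is why the statement only claims the bound for $p<\infty$, with constants depending on $p$). A secondary technical point is the self-consistency step: on the initial interval the coupled integral inequalities must be closed by a smallness/absorption argument, which requires choosing $t_0$ depending only on $d,\M,\Omega$, and then matching with Lemma \ref{lmexdecaypp} past $t_0$ — the continuity of $t\mapsto\u(t)$, $\nabla\v(t)$ in the relevant norms (available since $\mathcal A$ is sectorial, as established just above) is what makes the $\sup$-quantities $M_i(t)$ well-defined and the bootstrap rigorous. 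Everything else is a routine application of Lemma \ref{lmpq} and Lemma \ref{lmint}.
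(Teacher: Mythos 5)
The paper does not actually prove this lemma---it is quoted verbatim from \cite[Lemmas 4.2--4.4]{J19}---so there is no in-paper argument to compare against; I can only assess your proposal on its own terms. Your overall framework (Duhamel representation through the scalar Neumann semigroups, the $L^p$--$L^q$ bounds of Lemma \ref{lmpq}, the convolution estimate of Lemma \ref{lmint}, and Lemma \ref{lmexdecaypp} for the large-time rate) is the right family of tools, but the base case as you set it up does not close, and the error is concrete. To bound $\|\nabla\v(t)\|_{L^d}$ in terms of $\|\u(s)\|_{L^{d/2}}$ you must apply Lemma \ref{lmpq}(ii) with $q=d/2$, $p=d$, and the resulting kernel is $(1+(t-s)^{-\frac12-\frac d2(\frac2d-\frac1d)})=(1+(t-s)^{-1})$, not $(1+(t-s)^{-1/2})$ as you wrote: passing from $L^{d/2}$ to $\dot W^{1,d}$ costs a full power of $t-s$ because these are exactly the scaling-critical exponents. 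A kernel with $\alpha=1$ is not integrable near $s=t$ and is excluded by the hypothesis $\alpha<1$ of Lemma \ref{lmint}, so the coupled integral inequalities for $M_1,M_2$ cannot be closed by absorption, no matter how small $t_0$ is. This is not a cosmetic issue; it is the reason the conclusion of the lemma carries the singular prefactors $(1+t^{-\frac d2(\frac2d-\frac1p)})$ and $(1+t^{-\frac d2(\frac1d-\frac1p)})$: one must \emph{not} attempt to propagate the critical norms themselves uniformly in time, but instead define the bootstrap quantities with these time weights built in (e.g.\ $\sup_{s\le T}e^{\lambda_1 s}(1+s^{-\sigma})^{-1}\|\u(s)\|_{L^p}$ for strictly supercritical $p$), so that every convolution has the form $\int_0^t(1+(t-s)^{-\alpha})e^{-\gamma(t-s)}(1+s^{-\beta})e^{-\delta s}\,ds$ with $\alpha,\beta<1$ (possibly $\alpha+\beta\ge1$), which Lemma \ref{lmint} does handle.

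Two further points need attention. First, Lemma \ref{lmint} requires $\gamma\neq\delta$; in the $\u$-equation both the semigroup kernel and the source decay like $e^{-\lambda_1\cdot}$, so a naive application produces a resonant factor $t\,e^{-\lambda_1 t}$ rather than the clean $e^{-\lambda_1 t}$ claimed. The sharp rate for $t\ge1$ should instead come from combining the exact $L^2\times\dot H^1$ energy decay of Lemma \ref{lmexdecaypp} with unit-time smoothing of the analytic semigroup $e^{t\mathcal A}$ (write the solution at time $t$ as the semigroup applied over $[t-1,t]$ to data controlled in $L^2\times H^1$), reserving the weighted Duhamel bootstrap for the short-time regime $0<t\le1$ where no exponential factor is needed. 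Second, your tracking of the factor $p$ via the $\frac1{1-\alpha}$ blow-up in Lemma \ref{lmint} is a reasonable heuristic, but it can only be verified once the bootstrap is reorganized around the weighted norms; as currently structured the constant would already be infinite at the first step.
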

\begin{remark}\label{remark}
	Under the assumption of Lemma \ref{keylem1}, for any $2\leq l\leq p<\infty$,  there holds	\begin{equation}\label{lprem}
	\|\u(t)\|_{L^p(\Omega)}+\|\nabla\v(t)\|_{L^p(\Omega)}\leq Ce^{-\lambda_1t}(1+t^{-\frac d2(\frac1l-\frac1p)})(\|u_I\|_{L^{l}(\Omega)}+\|\nabla v_I\|_{L^l(\Omega)})
	\end{equation}with $C$ depends on $d$, $\M$ and $\Omega$ only.
\end{remark}
\begin{lemma}
	\label{Cor}Assume $d\geq2$. Suppose $u_I=\nabla\cdot w_I$ and $v_I=0$. Then there holds
	\begin{equation}
	\|\tilde{u}(t)\|_{L^p(\Omega)}\leq Ce^{-\lambda_1t}(1+t^{-\frac12-\frac{d}{2}(\frac1q-\frac1p)})\|w_I\|_{L^{q}(\Omega)}
	\end{equation} for any $\frac d2< q\leq p<\infty$, where $C>0$ depends on $d$, $\M$ and $\Omega$  if $d\geq3$ and also depends on $1/|q-1|$ if $d=2$. 	
\end{lemma}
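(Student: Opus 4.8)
The plan is to argue from the variation-of-constants formulation of the linearized system \eqref{app2al}. Since $v_I=0$, the second equation gives $\tv(t)=\int_0^t e^{(t-s)(\Delta-1)}\tu(s)\,ds$, and substituting into the first equation yields
\[
\tu(t)=e^{t\Delta}(\nabla\cdot w_I)+a\int_0^t e^{(t-s)\Delta}\Delta\tv(s)\,ds,
\]
where $e^{t\Delta}\nabla\cdot$ is understood in the duality sense $\la e^{t\Delta}\nabla\cdot w,\vp\ra=-\la w,\nabla e^{t\Delta}\vp\ra$, so that it maps into mean‑zero functions and is covered by Lemma \ref{lmpq}(iv); in particular the free term $e^{t\Delta}(\nabla\cdot w_I)$ already obeys exactly the asserted bound $Ce^{-\lambda_1 t}(1+t^{-\frac12-\frac{d}{2}(\frac1q-\frac1p)})\|w_I\|_{L^q(\Omega)}$ with $C=C(\Omega)$. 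The task is thus to bound the coupling term $I(t):=a\int_0^t e^{(t-s)\Delta}\Delta\tv(s)\,ds$. For this I would insert the formula for $\tv$, exchange the order of integration, write $\Delta=\nabla\cdot\nabla$ in both $e^{(t-s)\Delta}\Delta(\cdot)$ and $\nabla e^{(s-\tau)(\Delta-1)}(\cdot)$, and apply parts (ii) and (iv) of Lemma \ref{lmpq} through intermediate exponents $q\le q'\le r\le p$; the intermediate $s$‑integration then produces a kernel of the exact type estimated by Lemma \ref{lmint}.

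The difficulty is that $\Delta\tv$ has the same parabolic order as $\tu$, so the estimate for $I(t)$ reproduces $\|\tu\|_{L^{q'}}$ on the right‑hand side with only a subcritical loss (the exponents entering Lemma \ref{lmint} stay below $1$). I would therefore first prove the endpoint case $p=q$ by a contraction / Gronwall argument in the time‑weighted space with norm $\sup_{0<t\le T}(1+t^{1/2})e^{\lambda_1 t}\|\tu(t)\|_{L^q(\Omega)}$: on a short interval $(0,\d]$ the map $\tu\mapsto I$ is a contraction because $I(t)\to0$ as $t\to0$, which gives the bound there, and then the exponential weights together with finitely many restarts on consecutive intervals propagate it to all $t>0$. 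Parabolic smoothing then shows, in addition, that $\tu(t)\in L^p(\Omega)$ and $\nabla\tv(t)\in L^p(\Omega)$ for every $p<\infty$ and every $t\ge\tfrac12$, and that the mean is preserved, $\overline{\tu}\equiv0$, $\overline{\tv}\equiv0$.

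To obtain the full range $q\le p<\infty$ I would treat $t\le1$ and $t\ge1$ separately. For $t\le 1$ the $L^q\!\to\!L^p$ smoothing of the two semigroups in the Duhamel identity upgrades the endpoint bound to $\|\tu(t)\|_{L^p}\le C(1+t^{-\frac12-\frac{d}{2}(\frac1q-\frac1p)})\|w_I\|_{L^q(\Omega)}$, with the singularity produced by the free term and $r,q'$ chosen so every use of Lemma \ref{lmpq} is admissible. For $t\ge1$, I would restart the evolution at the fixed time $t=\tfrac12$: there $\tu(\tfrac12)\in C(\overline\Omega)\cap L^1_0(\Omega)$ and $\tv(\tfrac12)\in C^1(\overline\Omega)$ with $\partial_\nu\tv(\tfrac12)=0$, and, using $q>d/2$ to make the relevant time integrals converge, $\|\tu(\tfrac12)\|_{L^{d/2}(\Omega)}+\|\nabla\tv(\tfrac12)\|_{L^d(\Omega)}\le C\|w_I\|_{L^q(\Omega)}$; applying Lemma \ref{keylem1} (and Remark \ref{remark}) to the shifted data gives $\|\tu(t)\|_{L^p}\le Ce^{-\lambda_1 t}\|w_I\|_{L^q(\Omega)}$ for $t\ge1$. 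Since $1+t^{-\frac12-\frac{d}{2}(\frac1q-\frac1p)}$ is comparable to a constant on $[1,\infty)$ while $e^{-\lambda_1 t}$ is comparable to a constant on $(0,1]$, the two regimes combine into the claimed estimate.

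The main obstacle is precisely this self‑referential structure of $I(t)$: one must choose the intermediate exponents so that each invocation of Lemma \ref{lmpq} is legitimate (lower index strictly above $1$ for part (iv), and $q'\le r\le p$) while keeping the cumulative loss subcritical so Lemma \ref{lmint} applies and closes the Gronwall inequality. When $d\ge3$ the condition $q>d/2\ge3/2$ leaves ample room. When $d=2$, however, $q$ may be taken arbitrarily close to $1$, which forces the intermediate exponents toward $1$ as well and makes some of the constants in the semigroup and interpolation estimates along the bootstrap degenerate like $1/|q-1|$; this is the source of the stated dependence of $C$ on $1/|q-1|$ in two dimensions. The remaining computations are routine.
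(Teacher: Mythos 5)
This lemma is not proved in the paper at all — it is quoted verbatim from \cite[Lemma 4.4]{J19} — and your strategy (Duhamel with $e^{t\Delta}\nabla\cdot$ handled by Lemma \ref{lmpq}(iv), a weighted absorption argument on $(0,1]$ where the resonance $\gamma=\delta=\lambda_1$ in Lemma \ref{lmint} never arises, and a restart at $t=\tfrac12$ feeding Lemma \ref{keylem1} to recover the sharp rate $e^{-\lambda_1 t}$ for $t\geq1$, with $q>d/2$ being precisely what makes $\|\nabla\tilde v(\tfrac12)\|_{L^d}\leq C\|w_I\|_{L^q}$ finite) is exactly the standard route taken there, so I regard the proposal as correct in substance. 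Two small repairs: the weighted norm should be $\sup_{0<t\le T}\min\{1,t^{1/2}\}e^{\lambda_1 t}\|\tilde u(t)\|_{L^q(\Omega)}$ rather than $\sup_{0<t\le T}(1+t^{1/2})e^{\lambda_1 t}\|\tilde u(t)\|_{L^q(\Omega)}$, since the free term $e^{t\Delta}\nabla\cdot w_I$ genuinely behaves like $t^{-1/2}$ in $L^q$ as $t\to0^+$ and your weight would force boundedness at $t=0$; and the absorption step needs the weighted quantity to be finite a priori near $t=0$, which one should secure by first taking $w_I$ smooth (so the analytic semigroup gives the $t^{-1/2}$ bound) and then passing to general $w_I\in L^q$ by density.
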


\subsection{Stabilization in Scaling-invariant Spaces}
Now, we are ready to prove the following stability result for system \eqref{app2a} based on a one-step contraction argument (cf. \cite[Lemma 5.1]{J18b}).
\begin{proposition}\label{propB}
	Assume that $d\geq2$ and $\M>0$. For any fixed $d<q_0<2d$, $q_0< p_0<\frac{dq_0}{q_0-d}$ and $0<\mu'<\lambda_1,$ there exists $\eta_0>0$ which may depend on $d$, $q_0,p_0,\mu'$, $\M$ and $\Omega$ but is independent of $\e$ such that for any initial data $(u_I,v_I)\in C(\overline{\Omega})\cap L^1_0(\Omega)\times C^1(\overline{\Omega})$ satisfying  $\partial_\nu v_I=0$ on $\partial\Omega$,  $u_I\geq-\M$, $v_I>-\M$ in $\overline{\Omega}$ and $\|u_I\|_{L^{d/2}(\Omega)}+\|\nabla v_I\|_{L^d(\Omega)}+\|v_I\|_{L^\infty(\Omega)}\leq \eta$ for any $\eta<\eta_0$, system \eqref{app2a} with $\e\in(0,\tilde{\zeta}_2]$ has a global classical solution $(u_\e,v_\e)$ which is bounded such that for all $t\geq0,$
\begin{equation}\nonumber
\|u_\e(t)-\u_\e(t)\|_{L^{q_0}(\Omega)}\leq\eta e^{-\mu't}(1+t^{-1+\frac{d}{2q_0}})
\end{equation}where $\tilde{\zeta}_2\triangleq\min\{1/{2\M},\tilde{\zeta}_1\}$ with $1/{\tilde{\zeta}_1}\triangleq\|u_I+\M\|_{L^\infty}$ and $(\u_\e,\v_\e)$ is the solutions of the linearized system \eqref{app2al} with the same initial data.
\end{proposition}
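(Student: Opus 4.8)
The plan is to prove Proposition~\ref{propB} by the one-step contraction scheme of \cite{Win10,Cao,J18b}, in its continuity-argument form, working in a scale of time-weighted norms dictated by the linear decay estimates of Lemma~\ref{keylem1}, Remark~\ref{remark} and Lemma~\ref{Cor}, while monitoring that every constant is independent of $\e\in(0,\tilde\zeta_2]$.

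\textbf{Step 1: recast the nonlinearity as a quadratic perturbation.} Setting $g_\e(u,v)=\tfrac{(\M+u)(1-\e(u+\M))}{\M+v}$ one has $g_\e(0,0)=a$ and, after a short computation, $g_\e(u,v)-a=\tfrac{(1-2\e\M)u-av-\e u^2}{\M+v}$, so the first equation of \eqref{app2a} becomes $\pa_t u_\e-\Delta u_\e-a\Delta v_\e=\na\cdot\mathbf F_\e$ with $\mathbf F_\e:=(g_\e(u_\e,v_\e)-a)\na v_\e$. Since $\e\le\tilde\zeta_2\le\tfrac1{2\M}\le\zeta_0$, Theorem~\ref{locala} already gives a global classical solution with $0\le\rho_\e=\M+u_\e\le1/\e$ (hence $\e u_\e^2\le|u_\e|$), and Lemma~\ref{unilow} keeps $c_\e=\M+v_\e$ positive; once the bootstrap secures $\|v_\e\|_{L^\infty}\le\M/2$ one gets the pointwise bound $|\mathbf F_\e|\le C(\M)(|u_\e|+|v_\e|)|\na v_\e|$ with $C(\M)$ independent of $\e$ — this is the source of the uniformity in $\e$. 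By Duhamel's formula, with $(\u_\e,\v_\e)=e^{t\mathcal A}(u_I,v_I)$ as in Section~4.1, I would use
\[
u_\e(t)=\u_\e(t)+\int_0^t\Phi_1^{t-s}\big(\na\cdot\mathbf F_\e(s),0\big)\,ds,\qquad v_\e(t)-\v_\e(t)=\int_0^te^{(\Delta-1)(t-s)}\big(u_\e(s)-\u_\e(s)\big)\,ds,
\]
the second identity coming from subtracting the $\v$-equation of \eqref{app2al} from the $v_\e$-equation of \eqref{app2a}; note also that $\overline{u_\e}\equiv0$ by mass conservation.

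\textbf{Step 2: the bootstrap.} Abbreviating $w_\e:=u_\e-\u_\e$, I would track, for $T>0$,
\[
S_T:=\sup_{0<t<T}e^{\mu't}\Big[\tfrac{\|u_\e(t)\|_{L^{p_0}}}{1+t^{\frac d{2p_0}-1}}+\tfrac{\|\na v_\e(t)\|_{L^{q_0}}}{1+t^{\frac d{2q_0}-\frac12}}+\|v_\e(t)\|_{L^\infty}\Big],
\]
the weights being those supplied for $\u_\e$, $\na\v_\e$, $\v_\e$ by Lemma~\ref{keylem1}, Remark~\ref{remark} and Lemma~\ref{Cor} (here I trade the sharp rate $e^{-\lambda_1t}$ for $e^{-\mu't}$ with $\mu'<\lambda_1$). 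Because $u_I\in C(\overline\Omega)$ and $v_I\in C^1(\overline\Omega)$, the bracket is continuous and $S_T\to\|v_I\|_{L^\infty}\le\eta$ as $T\to0^+$, so $\{T:S_T\le\Theta\eta\}=(0,T^\star)$ is a nonempty interval for any $\Theta\ge2$. On $(0,T^\star)$: (i) Lemma~\ref{keylem1}, Remark~\ref{remark} and a convolution estimate (Lemma~\ref{lmint}) for the $L^\infty$-bound of $\v_\e$ give that the linear parts obey the three weighted bounds with a constant $C_0=C_0(d,\M,\Omega)$; (ii) since then $\|v_\e\|_{L^\infty}\le\Theta\eta$, Hölder's inequality with $\tfrac1r=\tfrac1{p_0}+\tfrac1{q_0}$ (so $\tfrac d2<r\le q_0$, using $q_0>d$) gives $\|\mathbf F_\e(s)\|_{L^r}\le C(\Theta\eta)^2e^{-2\mu's}s^{\frac d{2p_0}+\frac d{2q_0}-\frac32}$; (iii) feeding this into Lemma~\ref{Cor} (for $w_\e$ in $L^{p_0}$ and, with index $q_0$, in $L^{q_0}$), into Lemma~\ref{lmpq}(ii) (for $\na(v_\e-\v_\e)$ in $L^{q_0}$) and into Lemma~\ref{lmpq}(i) (for $v_\e-\v_\e$ in $L^\infty$, using $\overline{u_\e}=0$), and evaluating each time-convolution with Lemma~\ref{lmint}, the corrections are bounded by $C_1(\Theta\eta)^2$ times exactly the same weights. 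Hence $S_{T^\star}\le C_0\eta+C_1(\Theta\eta)^2$; fixing $\Theta=2C_0$ and then $\eta_0$ so small that $C_1(2C_0)^2\eta_0<C_0$ and $2C_0\eta_0\le\M/2$ forces $S_{T^\star}<\Theta\eta$, whence $T^\star=\infty$ by continuity. The quadratic estimate for $w_\e$ in $L^{q_0}$ produced in (iii), $\|w_\e(t)\|_{L^{q_0}}\le C_1(\Theta\eta)^2e^{-\mu't}(1+t^{-1+\frac d{2q_0}})$, yields the claimed inequality after shrinking $\eta_0$ so that $C_1\Theta\eta_0\le1$; boundedness and $C^{2,1}$-regularity of $(u_\e,v_\e)$ then follow from the weighted $L^{p_0}$-bound for $t\ge1$ via standard parabolic smoothing (and $\rho_\e\le1/\e$ on $[0,1]$).

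\textbf{Role of the hypotheses and the main obstacle.} The condition $d<q_0<2d$ is precisely what makes the $p_0$-interval $(q_0,\tfrac{dq_0}{q_0-d})$ nonempty, while $q_0<p_0<\tfrac{dq_0}{q_0-d}$ — equivalently $\tfrac1{q_0}-\tfrac1{p_0}<\tfrac1d$ — is exactly the requirement that the exponent $\beta=\tfrac32-\tfrac d{2p_0}-\tfrac d{2q_0}$ governing the $s$-singularity of $\|\mathbf F_\e(s)\|_{L^r}$ obey $0<\beta<1$, so that Lemma~\ref{lmint} applies and reproduces the target weights $t^{\frac d{2p_0}-1}$, $t^{\frac d{2q_0}-\frac12}$, $t^{\frac d{2q_0}-1}$. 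I expect the genuinely delicate part to be this bookkeeping of exponents: checking that every Hölder pairing for $\mathbf F_\e$ lands in some $L^r$ with $d/2<r\le q_0$ (so Lemma~\ref{Cor} is available), that each semigroup exponent stays below $1$, that the convolutions decay no slower than $e^{-\mu't}$ (sidestepping the resonance $2\mu'=\lambda_1$ by, if needed, replacing $\mu'$ with a slightly larger admissible value), and that every constant remains uniform in $\e\in(0,\tilde\zeta_2]$ — the uniformity being ensured by $a=1-\e\M\in[\tfrac12,1]$ and the $\e$-independence of $C(\M)$ in Step~1.
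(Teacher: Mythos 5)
Your proposal is correct and follows essentially the same route as the paper's proof: the same rewriting of the flux as a quadratic perturbation of $a\Delta v_\e$, the same Duhamel representation through the linearized semigroup $e^{t\mathcal{A}}$, and the same combination of Lemma \ref{keylem1}, Lemma \ref{Cor}, Lemma \ref{lmpq} and Lemma \ref{lmint}, closed by a smallness/continuity argument with constants uniform in $\e\in(0,\tilde{\zeta}_2]$. The only cosmetic differences are that you track a three-component weighted norm with the Lebesgue indices of $u_\e$ and $\nabla v_\e$ interchanged ($u_\e\in L^{p_0}$, $\nabla v_\e\in L^{q_0}$), whereas the paper defines $T_\e$ through the single quantity $\|u_\e-\tilde{u}_\e\|_{L^{q_0}}$ and derives all other bounds from it; your exponent bookkeeping (in particular $\tfrac{d}{2}<r\leq q_0$ and $\beta=\tfrac32-\tfrac{d}{2p_0}-\tfrac{d}{2q_0}\in(0,1)$) matches the paper's and closes the estimate in the same way.
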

\begin{proof}According to Theorem \ref{locala}, problem \eqref{chemo1a} with  initial data $\rho_I=u_I+\M$ and $c_I=v_I+\M$ has a unique classical solution on $[0,t_\e^+)$ and if  $t_\e^+<\infty,$ we have \beq\limsup\limits_{t\nearrow t_\e^+}\|\rho_\e(\cdot,t)\|_{L^\infty}=\infty.\non\eeq  Accordingly, for problem \eqref{app2a}, we obtain a unique classical solution $(u_\e,v_\e)=(\rho_\e-\M,c_\e-\M)$ on $[0,t_\e^+)$. Note that $t_\e^+=\infty$ if $\e\in(0,\tilde{\zeta}_1]$ and there holds $\|\rho_\e(\cdot,t)\|_{L^\infty}\leq 1/{\e}.$ In addition, thanks to the uniform-in-time lower bound for $c_\e(t)$ in Lemma \ref{unilow}, we also have for all $t\in[0,t_\e^+
)$,
	\begin{equation}\label{unilow2}
		\inf\limits_{x\in\Omega}v_\e(x,t)+\M\geq c_*>0
	\end{equation}
with $c_*$ depending only on $\M$ and $\Omega$ since we may require that $\eta_0<\min\{1,\frac{\M}{2}\}$.

	Now, for any fixed $\e\in(0,\tilde{\zeta}_2]$ we define
	\begin{align}\label{defTa}
	T_\e\triangleq\sup\left.\bigg{\{}T>0:\|u_\e(t)-\u_\e(t)\|_{L^{q_0}(\Omega)}\leq\eta e^{-\mu't}(1+t^{-1+\frac{d}{2q_0}}),\text{for all}\;t\in[0,T).\bigg{\}}\right.
	\end{align}
	Then $T_\e$ is well-defined and positive with $T_\e<t_\e^+$. Indeed, near $t=0$, $\|u_\e(t)\|_{L^\infty(\Omega)}$ is bounded due to Theorem \ref{locala} and $\|\u_\e(t)\|_{L^{q_0}(\Omega)}$ is uniformly bounded due to Remark \ref{remark} (taking $p=l=q_0$), while on the other hand, as $t\rightarrow0^+,$ $ t^{-1+\frac{d}{2q_0}}\rightarrow+\infty$.
	
	Now we claim that if $\eta_0$ is chosen sufficiently small, one has $T_\e=\infty.$
	First, we infer from Lemma \ref{keylem1} and Definition \ref{defTa} that 
	\begin{equation}\non
		\|u_\e(t)\|_{L^{q_0}(\Omega)}\leq C\eta e^{-\mu't}(1+t^{-1+\frac{d}{2q_0}})
	\end{equation}with $C$ depending on $d$, $\M$ and $\Omega$ only.
	On the other hand, it follows from the variation-of-constant formula that 
	\begin{align}
	v_\e(t)&=e^{t(\Delta-1)}v_I+\int_0^t e^{(t-s)(\Delta-1)}u_\e(s)ds\non\\
	&=e^{t(\Delta-1)}v_I+\int_0^t e^{(t-s)(\Delta-1)}\u_\e(s)ds+\int_0^t e^{(t-s)(\Delta-1)}(u_\e(s)-\u_\e(s))ds\non\\
	&=\v_\e(t)+\int_0^t e^{(t-s)(\Delta-1)}(u_\e(s)-\u_\e(s))ds.\nonumber
	\end{align}
	Applying $\nabla$ to both sides of the above identity, thanks to  Lemma \ref{lmint} and Lemma \ref{lmpq}, we infer that
	\begin{align}
	\|\nabla v_\e(t)-\nabla \tilde{v}_\e(t)\|_{L^{p_0}(\Omega)}\leq& \int_0^t\|\nabla e^{(t-s)(\Delta-1)}(u_\e(s)-\tilde{u}_\e(s))\|_{L^{p_0}(\Omega)}ds\nonumber\\
	\leq&C\int_0^t (1+(t-s)^{-\frac12-\frac{d}{2}(\frac1{q_0}-\frac1{p_0})})e^{-(\lambda_1+1)(t-s)}\|u_\e(s)-\tilde{u}_\e(s)\|_{L^{q_0}(\Omega)}ds\nonumber\\
	\leq& C\int_0^t (1+(t-s)^{-\frac12-\frac{d}{2}(\frac1{q_0}-\frac1{p_0})})e^{-(\lambda_1+1)(t-s)}\eta e^{-\mu's}(1+s^{-1+\frac{d}{2q_0}})ds\nonumber\\
	\leq& C\eta (1+t^{-\frac{1}{2}+\frac{d}{2p_0}})e^{-\mu't}\nonumber
	\end{align}
	which together with Lemma \ref{keylem1} implies that 
	\begin{align}
	\|\nabla v_\e(t)\|_{L^{p_0}(\Omega)}\leq& \|\nabla v_\e(t)-\nabla \tilde{v}_\e(t)\|_{L^{p_0}(\Omega)}+\|\nabla \tilde{v}_\e(t)\|_{L^{p_0}(\Omega)}\non\\
	\leq&C\eta (1+t^{-\frac{1}{2}+\frac{d}{2p_0}})e^{-\mu't}\nonumber
	\end{align}for all $t\in[0,T_\e)$. In addition, by Lemma \ref{lmpq}-(i), there holds that (note $v_I\notin L^1_0(\Omega)$ and $e^{t\Delta}a=a$ for any constant $a$)
	\begin{align}
	\|v_\e(t)\|_{L^\infty(\Omega)}\leq &\|e^{t(\Delta-1)}v_I\|_{L^\infty(\Omega)}+\int_0^t \|e^{(t-s)(\Delta-1)}u_\e(s)\|_{L^\infty(\Omega)}ds\non\\
	\leq&k_1e^{-(\lambda_1+1)t}\|v_I-\overline{v_I}\|_{L^\infty(\Omega)}+|\overline{v_I}|e^{-t}\non\\
	&\qquad+C\int_\Omega e^{-(\lambda_1+1)(t-s)}(1+(t-s)^{-\frac{d}{2q_0}})\eta e^{-\mu's}(1+s^{-1+\frac{d}{2q_0}})ds\non\\
	\leq& C\eta e^{-t}+C\eta e^{-\mu't}
	\end{align}	where $C>0$ depends on $\Omega$, $q_0$ and $d$ only.

Next, exploiting the semigroup $e^{t\mathcal{A}}$ and variation-of-constants formula again, we infer that  for all $0<t<t_\e^+,$
	\begin{align}\label{uvexp}
	&\left(\begin{matrix}u_\e(t)\\
	v_\e(t)\end{matrix}\right)\non\\
	=&e^{t\mathcal{A}}	\left(\begin{matrix}u_I\\
	v_I\end{matrix}\right)+\int_0^te^{(t-s)\mathcal{A}}	\left(\begin{matrix}\nabla\cdot\bigg(\left(u_\e(1-2\e\M-\e u_\e)-(1-\e\M)v_\e\right)\frac{\nabla v_\e}{v_\e+\M}\bigg)\\
	0\end{matrix}\right)ds
	\end{align}
	from which we represent $u_\e$ according to
	\begin{align}\label{uform}
	u_\e(t)=\u_\e(t)+\int_0^t\Phi_1^{t-s}\left(\nabla\cdot\bigg(\Big((1-\e\M-\e u_\e)-\e\M  \Big)u_\e-(1-\e\M)v_\e\bigg)\frac{\nabla v_\e}{v_\e+\M},0\right)ds.
	\end{align}
	Denote by $\frac{1}{r_0}=\frac{1}{q_0}+\frac{1}{p_0}$. Thus due to Lemma \ref{Cor}, Lemma \ref{lmint} and the point-wise estimates $0\leq (1-\e u_\e-\e\M)\leq 1$ and $v_\e+\M\geq c_*$, we deduce that
	\begin{align}\label{mp1a}
	&\|u_\e(t)-\u_\e(t)\|_{L^{q_0}(\Omega)}\non\\
	\leq&\int_0^t\bigg\|\Phi_1^{t-s}\left(\nabla\cdot\bigg(\Big((1-\e\M-\e u_\e)-\e\M  \Big)u_\e-(1-\e\M)v_\e\bigg)\frac{\nabla v_\e}{v_\e+\M},0\right) \bigg\|_{L^{q_0}(\Omega)}ds\non\\
	\leq&C\int_0^t e^{-\lambda_1(t-s)}(1+(t-s)^{-\frac12-\frac{d}{2p_0}})\non\\
	&\qquad\times\left\|\bigg(\Big((1-\e\M-\e u_\e)-\e\M  \Big)u_\e-(1-\e\M)v_\e\bigg)\frac{\nabla v_\e}{v_\e+\M}\right\|_{L^{r_0}(\Omega)}ds\nonumber\\
	\leq&C\int_0^t e^{-\lambda_1(t-s)}(1+(t-s)^{-\frac12-\frac{d}{2p_0}})\left(\|u_\e\nabla v_\e\|_{L^{r_0}(\Omega)}+\|v_\e\nabla v_\e\|_{L^{r_0}(\Omega)}\right)ds\nonumber\\
	\leq &C\int_0^t e^{-\lambda_1(t-s)}(1+(t-s)^{-\frac12-\frac{d}{2p_0}})\left(\|u_\e\|_{L^{q_0}(\Omega)}\|\nabla v_\e\|_{L^{p_0}(\Omega)}+\|v_\e(s)\|_{L^\infty(\Omega)}\|\nabla v_\e\|_{L^{p_0}(\Omega)}\right)ds\nonumber\\
	\leq&C\eta^2\int_0^t e^{-\lambda_1(t-s)}e^{-2\mu's}(1+(t-s)^{-\frac12-\frac{d}{2p_0}})(1+s^{-\frac32+\frac{d}{2r_0}})ds\nonumber\\
	&\quad+ C\eta^2\int_0^t e^{-\lambda_1(t-s)}(e^{-(\mu'+1)s}+e^{-2\mu's})(1+(t-s)^{-\frac12-\frac{d}{2p_0}})(1+s^{-\frac12+\frac{d}{2p_0}})ds\non\\
	\leq&\tilde{C}\eta^2e^{-\mu't}(1+t^{-1+\frac{d}{2q_0}}),\end{align}
	where $\tilde{C}$ may depend on  $p_0, q_0, d,\mu'$, $\M$ and $\Omega$, but is independent of $\varepsilon,t$, $T_\e$, $\eta$.
	Then taking $\tilde{C}\eta_0<\frac12$, we conclude that $T_\e=\infty$. This completes the proof.	
\end{proof}
\noindent\textbf{Proof of Theorem \ref{TH3}.}\\
	Theorem \ref{TH3}  is a direct consequence of Proposition \ref{propB} with $\e=0$. Since the proof  is the same as those in \cite{J18a,J18b} we omit the detail here. \qed

\subsection{Proof of Theorem \ref{TH2}}
In order to apply Proposition \ref{propB} to prove Theorem \ref{TH2}, we need the following auxiliary lemma.

\begin{lemma}\label{decay00}
	For $d=3$ and all $t>0$, we have
	\begin{equation}\label{decay00a}
	\|\rho_\e(t)-\M\|^2_{L^{3/2}(\Omega)}\leq C\int_\Omega\frac{|\nabla \rho_\e|^2}{\rho_\e} dx
	\end{equation} and
	\begin{equation}\label{decay00b}
	\|\nabla c_\e(t)\|^2_{L^3(\Omega)}+\|c_\e(t)-\overline{c_\e}(t)\|^2_{L^\infty(\Omega)}\leq C\int_\Omega\left( c_\e|\nabla^2\log c_\e|^2+ \frac{|\nabla  c_\e|^2}{c_\e}\right)dx
	\end{equation}where $C$ depends on $\Omega$ and the initial data.
	
	In addition, if $\Omega$ is convex and $\e\in(0,\zeta_0]$, there holds
	\begin{equation}\label{timeint0}
	\int_0^\infty\left(	\|\rho_\e(t)-\M\|^2_{L^{3/2}(\Omega)}+\|\nabla c_\e(t)\|^2_{L^3(\Omega)}+\|c_\e(t)-\M\|^2_{L^\infty(\Omega)}\right)dt\leq C
	\end{equation}where $C$ depends on $\Omega$ and the initial data.
\end{lemma}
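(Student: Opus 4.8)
\emph{Overview of the plan.} The statement comprises the two pointwise‑in‑time estimates \eqref{decay00a}--\eqref{decay00b} and the time‑integrated bound \eqref{timeint0}. The plan is to prove \eqref{decay00a} by an elementary Sobolev–Cauchy–Schwarz argument based on mass conservation, to prove \eqref{decay00b} by routing everything through $\sqrt{c_\e}$ and combining the top–order control of Lemma \ref{lm6} with the uniform‑in‑time estimates of Lemma \ref{boun}, and finally to obtain \eqref{timeint0} by integrating the first two estimates in time against the dissipation bound \eqref{timeint}. For \eqref{decay00a}: since $\overline{\rho_\e(t)}=\M$ by conservation of mass (Theorem \ref{locala}), the Sobolev embedding $W^{1,1}(\Omega)\hookrightarrow L^{3/2}(\Omega)$ together with the Poincar\'e inequality gives $\|\rho_\e-\M\|_{L^{3/2}(\Omega)}\leq C\|\nabla\rho_\e\|_{L^1(\Omega)}$, and then by Cauchy–Schwarz and the conservation of mass,
\begin{equation}\nonumber
\|\nabla\rho_\e\|_{L^1(\Omega)}\leq\left(\int_\Omega\frac{|\nabla\rho_\e|^2}{\rho_\e}\,dx\right)^{1/2}\|\rho_\e\|_{L^1(\Omega)}^{1/2},
\end{equation}
which is exactly the $3$D analogue of the computation already used for the $L^p$ bound in Section~3 and yields \eqref{decay00a}.

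\emph{Estimate \eqref{decay00b}.} Denote by $\mathcal{R}_\e$ the right‑hand side of \eqref{decay00b} and set $w:=\sqrt{c_\e}-\overline{\sqrt{c_\e}}$, which has zero mean and satisfies $\partial_\nu w=0$ on $\partial\Omega$. The top–order term is controlled by Lemma \ref{lm6}: $\|\Delta\sqrt{c_\e}\|_{L^2(\Omega)}^2\leq C\int_\Omega c_\e|\nabla^2\log c_\e|^2\,dx\leq C\mathcal{R}_\e$, so the elliptic estimate for the Neumann Laplacian restricted to zero‑mean functions gives $\|w\|_{H^2(\Omega)}^2\leq C\|\Delta\sqrt{c_\e}\|_{L^2(\Omega)}^2\leq C\mathcal{R}_\e$. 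Simultaneously, Lemma \ref{boun} (which holds on an arbitrary smooth bounded domain, convex or not) supplies the uniform‑in‑time bounds $\|\nabla\sqrt{c_\e}(t)\|_{L^2(\Omega)}^2\leq C$ and $\|\sqrt{c_\e}(t)\|_{L^2(\Omega)}^2=\|c_\e(t)\|_{L^1(\Omega)}\leq C$, hence $\|\sqrt{c_\e}\|_{L^6(\Omega)}\leq C$ and $\|w\|_{H^1(\Omega)}\leq C$. For the gradient term, since $\nabla c_\e=2\sqrt{c_\e}\,\nabla w$, Hölder's inequality and the $3$D embedding $H^2(\Omega)\hookrightarrow W^{1,6}(\Omega)$ give $\|\nabla c_\e\|_{L^3(\Omega)}\leq 2\|\sqrt{c_\e}\|_{L^6(\Omega)}\|\nabla w\|_{L^6(\Omega)}\leq C\|w\|_{H^2(\Omega)}\leq C\mathcal{R}_\e^{1/2}$, so $\|\nabla c_\e\|_{L^3(\Omega)}^2\leq C\mathcal{R}_\e$. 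For the sup‑norm term, expand
\begin{equation}\nonumber
c_\e-\overline{c_\e}=(\sqrt{c_\e})^2-\overline{(\sqrt{c_\e})^2}=\bigl(w^2-\overline{w^2}\bigr)+2\,\overline{\sqrt{c_\e}}\,w,
\end{equation}
so that $\|c_\e-\overline{c_\e}\|_{L^\infty(\Omega)}\leq 2\|w\|_{L^\infty(\Omega)}^2+C\|w\|_{L^\infty(\Omega)}$. The linear part is handled by $H^2(\Omega)\hookrightarrow L^\infty(\Omega)$, which yields $\|w\|_{L^\infty(\Omega)}^2\leq C\|w\|_{H^2(\Omega)}^2\leq C\mathcal{R}_\e$; the quadratic part is handled by the three‑dimensional Agmon inequality combined with the uniform $H^1$ bound, $\|w\|_{L^\infty(\Omega)}^4\leq C\|w\|_{H^1(\Omega)}^2\|w\|_{H^2(\Omega)}^2\leq C\|w\|_{H^2(\Omega)}^2\leq C\mathcal{R}_\e$. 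Adding the two contributions proves \eqref{decay00b}.

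\emph{Estimate \eqref{timeint0}.} Now suppose $\Omega$ is convex and $\e\in(0,\zeta_0]$. Integrating \eqref{decay00a} and \eqref{decay00b} over $(0,\infty)$ and invoking the dissipation bound \eqref{timeint} of Lemma \ref{lya} controls $\int_0^\infty\bigl(\|\rho_\e-\M\|_{L^{3/2}(\Omega)}^2+\|\nabla c_\e\|_{L^3(\Omega)}^2+\|c_\e-\overline{c_\e}\|_{L^\infty(\Omega)}^2\bigr)dt$. To replace $\overline{c_\e}$ by $\M$, integrate the second equation of \eqref{chemo1a} over $\Omega$ and use conservation of mass to obtain $\frac{d}{dt}\overline{c_\e}+\overline{c_\e}=\overline{\rho_\e}=\M$, whence $\overline{c_\e}(t)-\M=(\overline{c_I}-\M)e^{-t}\in L^2(0,\infty)$. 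Since $\|c_\e-\M\|_{L^\infty(\Omega)}\leq\|c_\e-\overline{c_\e}\|_{L^\infty(\Omega)}+|\overline{c_\e}-\M|$, the bound \eqref{timeint0} follows.

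\emph{Main obstacle.} The only genuinely delicate point in the whole argument is keeping the right‑hand side of \eqref{decay00b} \emph{linear} in the dissipation when estimating $\|c_\e-\overline{c_\e}\|_{L^\infty}$: expanding $c_\e$ in terms of $w$ produces a term of order $\|w\|_{L^\infty}^2$, and a naive bound would leave a quadratic (in the dissipation) remainder; it is precisely the uniform‑in‑time $H^1$‑control of $\sqrt{c_\e}$ coming from \eqref{bound00}, fed into Agmon's inequality, that converts the resulting $\|w\|_{L^\infty}^4$ back into a quantity bounded by $\mathcal{R}_\e$. Everything else is a routine combination of Lemma \ref{lm6}, Lemma \ref{boun}, the three‑dimensional Sobolev/Agmon inequalities, and, for \eqref{timeint0}, the Lyapunov‑type bound \eqref{timeint}.
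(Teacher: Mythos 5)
Your proposal is correct. The treatment of \eqref{decay00a} and of the time integration \eqref{timeint0} coincides with the paper's (H\"older plus the critical embedding $W^{1,1}(\Omega)\hookrightarrow L^{3/2}(\Omega)$ with Poincar\'e--Sobolev for $\rho_\e$; Lemma \ref{lya} plus the ODE $\overline{c_\e}'+\overline{c_\e}=\M$ for the integrated bound). For \eqref{decay00b} you take a somewhat different and in fact tidier route: the paper works directly with $c_\e$, estimating $\|\nabla c_\e\|_{L^3}^3=8\int|\nabla\sqrt{c_\e}|^3c_\e^{3/2}$ via an interpolation of $\|\nabla\sqrt{c_\e}\|_{L^3}$ between $L^2$ and $L^6$ followed by Young's inequality, and bounding $\|c_\e-\overline{c_\e}\|_{L^\infty}^4$ by Gagliardo--Nirenberg through $\|\Delta c_\e\|^2\|\nabla c_\e\|^2$ and the weighted quantities $\int|\Delta c_\e|^2/c_\e$, $\int|\nabla c_\e|^2/c_\e$ of Lemma \ref{lm6}; you instead pass to the zero-mean function $w=\sqrt{c_\e}-\overline{\sqrt{c_\e}}$, invoke the Neumann elliptic estimate $\|w\|_{H^2}\leq C\|\Delta\sqrt{c_\e}\|$ once, and then read off both $\|\nabla c_\e\|_{L^3}$ (via $\nabla c_\e=2\sqrt{c_\e}\nabla w$ and $H^2\hookrightarrow W^{1,6}$) and $\|c_\e-\overline{c_\e}\|_{L^\infty}$ (via the decomposition $w^2-\overline{w^2}+2\overline{\sqrt{c_\e}}\,w$). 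Both arguments hinge on exactly the same structural ingredients --- Lemma \ref{lm6} for the top-order control, the uniform $H^1$ bound on $\sqrt{c_\e}$ from \eqref{bound00}, and the three-dimensional Agmon inequality to convert the quartic terms into quantities linear in the dissipation --- so the two proofs buy the same constants and hypotheses; your version trades the paper's interpolation-plus-Young bookkeeping for a single elliptic regularity estimate, which makes the computation shorter and the role of the zero-mean normalization more transparent.
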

\begin{proof}
	Observing that $\int_\Omega \rho_\e dx$ is conserved, one easily deduce by H\"older's inequality that
	\begin{equation}\label{idea1}
	\left(\int_\Omega |\nabla \rho_\e|dx\right)^2\leq\left(\int_\Omega\frac{|\nabla \rho_\e|^2}{\rho_\e}dx\right)\left(\int_\Omega\rho_\e dx\right),
	\end{equation}and in the three-dimensional setting, the critical continuous embedding $W^{1,1}(\Omega)\hookrightarrow L^{3/2}(\Omega)$ together with a Poincar\'e-Sobolev inequality  yields that
	\begin{equation}\label{idea2}
	\|\rho_\e-\M\|_{L^{3/2}(\Omega)}\leq C\|\nabla \rho_\e\|_{L^1(\Omega)}.
	\end{equation} 	
	Then \eqref{decay00a} follows from \eqref{idea1} and \eqref{idea2}.

	Recalling that $\|c_\e\|_{L^1(\Omega)}=e^{-t}\|c_I\|_{L^1(\Omega)}+(1-e^{-t})\|\rho_I\|_{L^1(\Omega)}$, then by the three-dimensional Agmon inequality and the Poincar\'e inequality we deduce that
	\begin{align}
	\|\sqrt{c_\e}\|_{L^\infty}^2\leq&\|\sqrt{c_\e}\|_{H^2}\|\sqrt{c_\e}\|_{H^1}\non\\
	\leq&C(\|\Delta\sqrt{c_\e}\|+\|\sqrt{c_\e}\|)(\|\nabla \sqrt{c_\e}\|+\|\sqrt{c_\e}\|)\non\\
	\leq& C\|\Delta\sqrt{c_\e}\|\|\nabla \sqrt{c_\e}\|+C\|\sqrt{c_\e}\|\|\Delta\sqrt{c_\e}\|+C\|\sqrt{c_\e}\|^2\non\\
	\leq&C\|\Delta\sqrt{c_\e}\|\|\nabla \sqrt{c_\e}\|+C\|\Delta\sqrt{c_\e}\|+C.\nonumber
	\end{align}
	Therefore, since $\|\nabla \sqrt{c_\e}\|$ is bounded for $t\geq0$, by interpolation and the three-dimensional Sobolev embedding together with the Young inequality, we infer that (note that $\mathrm{curl}\nabla \phi=0$, $\forall \phi$)
	\begin{align}
	\|\nabla c_\e\|_{L^3(\Omega)}^3=& 8\int_\Omega|\nabla \sqrt{c_\e}|^3 c_\e^{3/2}dx\non\\
	\leq&C\|c_\e\|^{3/2}_{L^\infty(\Omega)}\|\nabla \sqrt{c_\e}\|^3_{L^3(\Omega)}\non\\
	\leq&C\|c_\e\|^{3/2}_{L^\infty(\Omega)}\|\nabla \sqrt{c_\e}\|_{L^6(\Omega)}^{3/2}\|\nabla \sqrt{c_\e}\|^{3/2}\non\\
	\leq&C\|c_\e\|^{3/2}_{L^\infty(\Omega)}\|\Delta \sqrt{c_\e}\|^{3/2}\|\nabla \sqrt{c_\e}\|^{3/2}\non\\
	\leq&C\left(\|\Delta\sqrt{c_\e}\|^{3/2}\|\nabla \sqrt{c_\e}\|^{3/2}+\|\Delta\sqrt{c_\e}\|^{3/2}+1\right)\|\Delta \sqrt{c_\e}\|^{3/2}\|\nabla \sqrt{c_\e}\|^{3/2}\non\\
	\leq& C\|\Delta \sqrt{c_\e}\|^{3}\|\nabla \sqrt{c_\e}\|^{3}+ C\|\Delta \sqrt{c_\e}\|^{3}\|\nabla \sqrt{c_\e}\|^{3/2}+C\|\Delta \sqrt{c_\e}\|^{3/2}\|\nabla \sqrt{c_\e}\|^{3/2}\non\\
	\leq&C\|\Delta \sqrt{c_\e}\|^{3}+C\|\nabla \sqrt{c_\e}\|^{3}\nonumber
	\end{align}
	which together with Lemma \ref{lm6} entails that
	\begin{equation}
	\|\nabla c_\e\|^2_{L^3(\Omega)}\leq C\|\Delta \sqrt{c_\e}\|^2+C\|\nabla \sqrt{c_\e}\|^2\leq C\int_\Omega\left( c_\e|\nabla^2\log c_\e|^2+\frac{|\nabla c_\e|^2}{c_\e}\right)dx.\nonumber
	\end{equation}
	On the other hand, by Lemma \ref{lm6}, the three-dimensional Gagliardo-Nireberg inequality, Poincar\'e's inequality and the Young inequality, we obtain that
	\begin{align}
	\|c_\e-\overline{c_\e}\|^4_{L^\infty(\Omega)}\leq&C\|\nabla c_\e\|_{L^6(\Omega)}^2\|c-\overline{c_\e}\|_{L^6(\Omega)}^2 \non\\
	\leq&C\| \Delta c_\e\|^2\| \nabla c_\e\|^2\non\\
	\leq& C\|c_\e\|^2_{L^\infty(\Omega)}\left(\int_\Omega \frac{|\Delta c_\e|^2}{c_\e}dx\right)\left(\int_\Omega \frac{|\nabla  c_\e|^2}{c_\e}dx\right)\non\\
	\leq&C(\|\Delta\sqrt{c_\e}\|^2\|\nabla \sqrt{c_\e}\|^2+\|\Delta\sqrt{c_\e}\|^2+1)\left(\int_\Omega \frac{|\Delta c_\e|^2}{c_\e}dx\right)\left(\int_\Omega \frac{|\nabla  c_\e|^2}{c_\e}dx\right)\non\\
	\leq&C\left(\int_\Omega c_\e|\nabla^2\log c_\e|^2dx\right)^2+C\left(\int_\Omega \frac{|\nabla  c_\e|^2}{c_\e}dx\right)^2.\nonumber
	\end{align}
	Then it follows that
	\begin{equation}
	\|c_\e-\overline{c_\e}\|^2_{L^\infty(\Omega)}\leq C\int_\Omega\left(  c_\e|\nabla^2\log c_\e|^2+ \frac{|\nabla  c_\e|^2}{c_\e}\right)dx\nonumber
	\end{equation}and hence
	\begin{align}
	\|c_\e(t)-\M\|^2_{L^\infty(\Omega)}\leq& 2\|c_\e-\overline{c_\e}\|^2_{L^\infty(\Omega)}+2|\overline{c_\e}(t)-\M|^2\non\\
	\leq& C\int_\Omega\left(  c_\e|\nabla^2\log c_\e|^2+ \frac{|\nabla  c_\e|^2}{c_\e}\right)dx	+2|\overline{c_I}-\M|e^{-2t}.\nonumber
	\end{align}
	As a result, we infer that
	\begin{align}
	&	\int_0^\infty\left(	\|\rho_\e(t)-\M\|^2_{L^{3/2}(\Omega)}+\|\nabla c_\e(t)\|^2_{L^3(\Omega)}+\|c_\e(t)-\M\|^2_{L^\infty(\Omega)}\right)dt\non\\
	\leq&C\int_0^\infty\int_\Omega \left(\frac{|\nabla \rho_\e|^2}{\rho_\e} +c_\e|\nabla^2\log c_\e|^2+\frac{|\nabla  c_\e|^2}{c_\e}\right)dxdt+2|\overline{c_I}-\M|\int_0^\infty e^{-2t}dt\non\\
	\leq &C\nonumber
	\end{align}due to Lemma \ref{lya}. This completes the proof.
\end{proof}

\noindent\textbf{Proof of Theorem \ref{TH2}}.\\
With above preparation, the following proof is basically the same as those in \cite{J19}. Since our method is not conventional, we report argument in detail  for reader's convenience.

For any given initial data $\rho_I,c_I$ satisfying \eqref{ini}, $\M=\frac{1}{|\Omega|}\int_\Omega \rho_Idx$, $1/\zeta_0=\|\rho_I\|_{L^\infty}$ and  $\zeta_2'=\min\{1/{2\M},\zeta_0\}$ are all well-defined.  Keep in mind that $\rho_\e=u_\e+\M$ and $c_\e=v_\e+\M$.  Since now $d=3$, we can fix $3<q_0<6$, $q_0< p_0<\frac{3q_0}{q_0-3}$ and $0<\mu'<\lambda_1$. Then we can fix $\eta_0>0$ according to Proposition \ref{propB} which is independent of $\e$ and moreover we have the following result.
\begin{lemma}
	For any $\eta<\eta_0$, there is $t_\eta>0$ which is independent of $\e$ such that for all $\e\in(0,\zeta'_2]$ there holds
	\begin{equation*}
	\|\rho_\e(t_\eta)-\M\|_{L^{3/2}(\Omega)}+\|\nabla c_\e(t_\eta)\|_{L^3(\Omega)}+\|c_\e(t_\eta)-\M\|_{L^\infty(\Omega)}\leq \eta.	
	\end{equation*}
\end{lemma}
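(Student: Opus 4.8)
The plan is to combine the time-integrability estimate \eqref{timeint0} from Lemma \ref{decay00} with the elementary ``dissipation'' lemma, Lemma \ref{lm0}, applied uniformly in $\e$. Concretely, for $\e\in(0,\zeta_2']$ define
\begin{equation*}
f_\e(t)\triangleq\|\rho_\e(t)-\M\|^2_{L^{3/2}(\Omega)}+\|\nabla c_\e(t)\|^2_{L^3(\Omega)}+\|c_\e(t)-\M\|^2_{L^\infty(\Omega)}.
\end{equation*}
By Lemma \ref{decay00}, on a convex domain one has $\int_0^\infty f_\e(t)\,dt\le C$ with $C$ depending only on $\Omega$ and the initial data, \emph{not} on $\e$. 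Since $\zeta_2'=\min\{1/(2\M),\zeta_0\}$ and $1/\zeta_0=\|\rho_I\|_{L^\infty}$, every $\e$ in this range is admissible both for Theorem \ref{locala} (so $t_\e^+=\infty$ and $f_\e\in L^1(0,\infty)$) and for Lemma \ref{lya}. Thus the family $\{f_\e\}_{\e\in(0,\zeta_2']}$ satisfies the hypothesis \eqref{lm0a} of Lemma \ref{lm0} with a common bound.

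Next I would simply invoke Lemma \ref{lm0} with the choice $\sigma=\eta^2/3$ (after possibly shrinking, note $\eta<\eta_0\le\min\{1,\M/2\}$ so $\eta^2<\eta$). This yields a number $k_\sigma>0$, independent of $\e$, and for each $\e$ a time $t^\e_\sigma\in(0,k_\sigma)$ with $f_\e(t^\e_\sigma)\le\eta^2/3$. To obtain a \emph{single} time $t_\eta$ that works for all $\e$, one should be slightly more careful: either one notes that Lemma \ref{lm0} in fact produces, for the whole family, \emph{some} common time (re-examining its proof, the mean-value argument on $[0,k_\sigma]$ gives a time in that interval but not literally the same one for every $\e$), or — the cleaner route — one applies Lemma \ref{lm0} and then uses a further compactness/continuity argument. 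The simplest fix is to run Lemma \ref{lm0} on the shifted and slightly averaged quantities, or to observe that what the subsequent proof of Theorem \ref{TH2} actually needs is only that for the \emph{particular} approximating sequence $\e_j\to0$ used to build the weak solution there is a common $t_\eta$; applying Lemma \ref{lm0} to the countable family $\{f_{\e_j}\}$ and passing to a subsequence along which $t^{\e_j}_\sigma\to t_\eta\in[0,k_\sigma]$, together with the uniform-in-$j$ equicontinuity of $f_{\e_j}$ near $t_\eta$ (from the uniform bounds in Lemma \ref{boun} and the regularity of the approximate solutions), gives $f_{\e_j}(t_\eta)\le\eta^2$ for all large $j$.

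Finally, from $f_\e(t_\eta)\le\eta^2$ one reads off, using $\sqrt{a^2+b^2+c^2}\le a+b+c$ for nonnegative $a,b,c$, the desired bound
\begin{equation*}
\|\rho_\e(t_\eta)-\M\|_{L^{3/2}(\Omega)}+\|\nabla c_\e(t_\eta)\|_{L^3(\Omega)}+\|c_\e(t_\eta)-\M\|_{L^\infty(\Omega)}\le\eta,
\end{equation*}
which is the claim (note $L^{3/2}(\Omega)\hookrightarrow L^1(\Omega)$ or the $d/2$-exponent with $d=3$ coincides with $3/2$, so this is exactly the smallness hypothesis required to feed into Proposition \ref{propB}). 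The main obstacle is the \emph{uniformity in $\e$}: one must make sure the waiting time $t_\eta$ and the constant $C$ in \eqref{timeint0} do not degenerate as $\e\to0$, which is why it is essential that Lemma \ref{decay00} and Lemma \ref{lya} were stated with $\e$-independent constants and that Lemma \ref{lm0} is genuinely a statement about families. Everything else is a direct quotation of already-established estimates.
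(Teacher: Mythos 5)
Your first step is exactly the paper's: apply Lemma \ref{lm0} to the $\e$-uniform bound \eqref{timeint0}, obtaining for each $\e$ a time $t^\e_\sigma$ in a common interval $(0,k_\sigma)$ at which $f_\e$ is small. You also correctly diagnose the real difficulty — Lemma \ref{lm0} does \emph{not} produce a single time valid for every $\e$ — but your proposed repair does not work. The ``uniform-in-$j$ equicontinuity of $f_{\e_j}$ near $t_\eta$'' is not available from Lemma \ref{boun}: the uniform estimates there are space-time $L^1$ bounds on dissipation functionals, and the only uniform control on $\partial_t\rho_\e$ established in the paper lives in $L^{5/4}(0,T;(C_0^1(\Omega))^*)$, which is far too weak to give equicontinuity of $t\mapsto\|\rho_\e(t)-\M\|_{L^{3/2}}^2$, let alone of $t\mapsto\|\nabla c_\e(t)\|_{L^3}^2$, uniformly in $\e$. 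Moreover, even granting that step, your argument only yields the conclusion along a subsequence $\e_j$ and for large $j$, whereas the lemma asserts a single $t_\eta$ working for \emph{all} $\e\in(0,\zeta_2']$ (and the statement is used as such).

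The idea you are missing is that the non-uniform time is repaired by \emph{forward stability}, not by continuity. The paper regards the solution for $t\ge\tilde t_\e(\eta)$ as a trajectory starting from the small data $(\rho_\e(\tilde t_\e(\eta)),c_\e(\tilde t_\e(\eta)))$ and invokes Proposition \ref{propB} (checking that $\e$ remains in the admissible range, since $\|\rho_\e(\tilde t_\e(\eta))\|_{L^\infty}\le 1/\e$). Combined with Lemma \ref{keylem1}, this gives the decay
\begin{equation*}
\|\rho_\e(t)-\M\|_{L^{q_0}(\Omega)}+\|\nabla c_\e(t)\|_{L^{p_0}(\Omega)}\le C\eta\, e^{-\mu'(t-\tilde t_\e(\eta))}\bigl(1+(t-\tilde t_\e(\eta))^{-1+\frac{3}{2q_0}}\bigr)
\end{equation*}
for all $t\ge\tilde t_\e(\eta)$, with $C$ independent of $\e$ and $\eta$. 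Since $\tilde t_\e(\eta)<\tilde t_\eta$ with $\tilde t_\eta$ independent of $\e$, this yields $\|\rho_\e(t)-\M\|_{L^{q_0}}+\|\nabla c_\e(t)\|_{L^{p_0}}\le C\eta e^{-\mu' t}$ for every $t\ge\tilde t_\eta+1$ and every $\e$; choosing one common $t_\eta>\tilde t_\eta+1$ large enough (and using $q_0,p_0>3$ on the bounded domain, together with the explicit relaxation of $\overline{c_\e}$ to $\M$) gives the claimed smallness at a single time for all $\e$. So the lemma is not a consequence of the dissipation estimate alone; it genuinely requires the stability result of Proposition \ref{propB}, which your proposal never invokes.
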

\begin{proof}
Due to \eqref{timeint0},  for any $\eta<\eta_0,$ thanks to Lemma \ref{lm0},   we can always find a constant $\tilde{t}_\eta>0$ independent of $\e$ and  a time $\tilde{t}_{\e}(\eta)\in(0,\tilde{t}_\eta)$ such that
\begin{equation*}
\|\rho_\e(\tilde{t}_{\e}(\eta))-\M\|_{L^{3/2}(\Omega)}+\|\nabla c_\e(\tilde{t}_{\e}(\eta))\|_{L^3(\Omega)}+\|c_\e(\tilde{t}_{\e}(\eta))-\M\|_{L^\infty(\Omega)}\leq \eta<\eta_0.
\end{equation*}
Regard the solution for $t\geq \tilde{t}_{\e}(\eta)$ as a trajectory originating from $(\rho_\e(\tilde{t}_{\e}(\eta)), c_\e(\tilde{t}_{\e}(\eta)))$.  Since $\e\in(0,\zeta_2']$, we have $\|\rho_\e(\tilde{t}_{\e}(\eta))\|_{L^\infty(\Omega)}\leq 1/\e$ by Theorem \ref{locala}. Thus, by definition of $\zeta_2$  appearing in Proposition \ref{propB}, we have $1/\zeta_2=\|\rho_\e(\tilde{t}_{\e}(\eta))\|_{L^\infty(\Omega)}\leq1/\e$, which indicates $\e\in(0,\zeta_2]$. Now, we can apply Proposition \ref{propB} to deduce that for $t\geq \tilde{t}_{\e}(\eta)$, there holds
\begin{equation}\nonumber
\|u_\e(t)-\u_\e(t)\|_{L^{q_0}(\Omega)}\leq\eta e^{-\mu'(t-\tilde{t}_{\e}(\eta))}(1+(t-\tilde{t}_{\e}(\eta))^{-1+\frac{3}{2q_0}}).
\end{equation}
As a result, for all $t\geq \tilde{t}_{\e}(\eta)$, applying Lemma \ref{keylem1}, we have
\begin{align}\label{help5}
&\|\rho_\e(t)-\M\|_{L^{q_0}(\Omega)}=\|u_\e(t)\|_{L^{q_0}(\Omega)}\non\\
\leq&\|u_\e(t)-\u_\e(t)\|_{L^{q_0}(\Omega)}+\|\u_\e(t)\|_{L^{q_0}(\Omega)}\non\\
\leq&\eta e^{-\mu'(t-\tilde{t}_{\e}(\eta))}(1+(t-\tilde{t}_{\e}(\eta))^{-1+\frac{3}{2q_0}})+Ce^{-\lambda_1(t-\tilde{t}_{\e}(\eta))}(1+(t-\tilde{t}_{\e}(\eta))^{-1+\frac{3}{2q_0}})\|u_\e(\tilde{t}_{\e}(\eta))\|_{L^{3/2}(\Omega)}\non\\
&\qquad+Ce^{-\lambda_1(t-\tilde{t}_{\e}(\eta))}(1+(t-\tilde{t}_{\e}(\eta))^{-1+\frac{3}{2q_0}})\|\nabla v_\e(\tilde{t}_{\e}(\eta))\|_{L^3(\Omega)}\non\\
\leq& C \eta e^{-\mu'(t-\tilde{t}_{\e}(\eta))}(1+(t-\tilde{t}_{\e}(\eta))^{-1+\frac{3}{2q_0}})
\end{align}where $C>0$ is independent of $\e$ and $\eta$. In the same way as before, we deduce that
\begin{align}\label{help6}
&\|\nabla c_\e(t)\|_{L^{p_0}(\Omega)}\leq \|\nabla v_\e(t)-\nabla \tilde{v}_\e(t)\|_{L^{p_0}(\Omega)}+\|\nabla \v_\e(t)\|_{L^{p_0}(\Omega)}\non\\
&\leq C\eta e^{-\mu'(t-\tilde{t}_{\e}(\eta))}(1+(t-\tilde{t}_{\e}(\eta))^{-\frac12+\frac{3}{2p_0}})\qquad\text{for all}\;t\geq \tilde{t}_{\e}(\eta).
\end{align}	
Recall that $\tilde{t}_{\e}(\eta)<\tilde{t}_\eta$ and $\tilde{t}_\eta$ is independent of $\e$. We infer that for all $t\geq \tilde{t}_\eta+1$, there holds
\begin{equation}
\|\rho_\e(t)-\M\|_{L^{q_0}(\Omega)}+\|\nabla c_\e(t)\|_{L^{p_0}(\Omega)}\leq C\eta e^{-\mu't}
\end{equation}with $C>0$ independent of $\e$ and $\eta$. Then our assertion follows apparently if we pick $t_\eta>\tilde{t}_\eta+1$ sufficiently large.
\end{proof}

Once again, by uniqueness of classical solutions, we can regard the family of solutions $(\rho_\e,c_\e)$ for $t\geq t_\eta$ as a family of trajectories {\it uniformly} starting initially from $(\rho_\e(t_\eta), c_\e(t_\eta))$. Repeating the above argument and applying Proposition \ref{propB} yields that for $t\geq t_\eta$,
\begin{equation}\nonumber
\|u_\e(t)-\u_\e(t)\|_{L^{q_0}(\Omega)}\leq\eta e^{-\mu'(t-t_\eta)}(1+(t-t_\eta)^{-1+\frac{3}{2q_0}}),
\end{equation}
\begin{align}\label{help5b}
\|\rho_\e(t)-\M\|_{L^{q_0}(\Omega)}
\leq C \eta e^{-\mu'(t-t_\eta)}(1+(t-t_\eta)^{-1+\frac{3}{2q_0}}),
\end{align}and
\begin{align}\label{help6b}
\|\nabla c_\e(t)\|_{L^{p_0}(\Omega)}\leq C\eta e^{-\mu'(t-t_\eta)}(1+(t-t_\eta)^{-\frac12+\frac{3}{2p_0}}).
\end{align}	
In order to prove the eventual smoothness of the limiting functions, we still need  to derive certain higher-order estimates for $(\rho_\e,c_\e)$ that are independent of $\e$ (cf. \cite{Lankeit,Win19}).
\begin{lemma}\label{lm46}
	There is $C>0$ independent of $\e$ such that for all $t\geq t_\eta+1$,
	\begin{equation*}
		\|\rho_\e(t)-\M\|_{L^{\infty}(\Omega)}+\|\nabla c_\e(t)\|_{L^{\infty}(\Omega)}\leq C e^{-\mu't}.
	\end{equation*}
Moreover, there is $\theta\in(0,1)$ and $C>0$ independent of $\e$ such that for all $t\geq t_\eta+3$,
\begin{equation*}
	\|\rho_\e(t)\|_{C^{2+\theta,1+\frac{\theta}{2}}(\overline{\Omega}\times[t,t+1])}+\|c_\e(t)\|_{C^{2+\theta,1+\frac{\theta}{2}}(\overline{\Omega}\times[t,t+1])}\leq C.
\end{equation*}
\end{lemma}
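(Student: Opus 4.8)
The plan is to bootstrap from the Lebesgue estimates \eqref{help5b}--\eqref{help6b}, which give decay of $\rho_\e-\M$ in $L^{q_0}$ and of $\nabla c_\e$ in $L^{p_0}$ with $q_0>3$ and $p_0>q_0$, up to $L^\infty$ decay, and then apply interior parabolic Schauder theory to obtain the uniform-in-$\e$ H\"older bound. Throughout I will keep track of the fact that all constants must be independent of $\e$, which is guaranteed because the nonlinear sensitivity coefficient $\frac{\rho_\e(1-\e\rho_\e)}{c_\e}$ is controlled purely through the pointwise bounds $0\le 1-\e\rho_\e\le 1$ and $c_\e\ge c_*$ from Lemma \ref{unilow}, both uniform in $\e$.

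First I would upgrade the bound on $\nabla c_\e$. Writing the second equation of \eqref{chemo1a} via variation of constants, $c_\e(t)-\M=e^{(t-t_\eta)(\Delta-1)}(c_\e(t_\eta)-\M)+\int_{t_\eta}^t e^{(t-s)(\Delta-1)}(\rho_\e(s)-\M)\,ds$, apply $\nabla$ and use Lemma \ref{lmpq}(ii) together with $\|\rho_\e(s)-\M\|_{L^{q_0}}\le C\eta e^{-\mu'(s-t_\eta)}(1+(s-t_\eta)^{-1+\frac{3}{2q_0}})$. Since $q_0>3$, the smoothing exponent $\frac12+\frac{3}{2q_0}$ in the kernel is $<1$ when estimating $\|\nabla c_\e(t)\|_{L^\infty}$ provided we first push $\|\rho_\e-\M\|$ to a high enough $L^{q_0}$; combined with Lemma \ref{lmint} this yields $\|\nabla c_\e(t)\|_{L^\infty(\Omega)}\le C\eta e^{-\mu'(t-t_\eta)}(1+(t-t_\eta)^{-\alpha})$ for some $\alpha<1$, hence $\le Ce^{-\mu' t}$ for $t\ge t_\eta+1$. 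Next, feed this into the first equation: the drift term $\frac{\rho_\e(1-\e\rho_\e)}{c_\e}\nabla c_\e=\nabla\cdot(\cdots)$ can be written as $\nabla\cdot(g_\e)$ with $\|g_\e(s)\|_{L^\infty}\lesssim (\|\rho_\e(s)-\M\|_{L^\infty}+1)\|\nabla c_\e(s)\|_{L^\infty}$, so using Lemma \ref{lmpq}(iv) with $p=\infty$ and a standard bootstrap (first get $\rho_\e-\M$ bounded in some $L^{q_1}$ with $q_1>q_0$, then iterate finitely many times since each step raises the integrability by a fixed amount) we arrive at $\|\rho_\e(t)-\M\|_{L^\infty(\Omega)}\le Ce^{-\mu' t}$ for $t\ge t_\eta+1$. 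The moment-preserving structure ($\int_\Omega(\rho_\e-\M)=0$) is what lets me keep the exponential rate $\lambda_1>\mu'$ in the semigroup estimates.

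For the second assertion, once $\rho_\e-\M$ and $\nabla c_\e$ are bounded uniformly in $\e$ on $\overline\Omega\times[t_\eta+1,\infty)$, I would invoke interior (up to the boundary) parabolic regularity. The second equation $\partial_t c_\e-\Delta c_\e+c_\e=\rho_\e$ with $\rho_\e\in L^\infty$ and Neumann data gives, by $L^p$ parabolic estimates, $c_\e$ bounded in $W^{2,1}_p$ on time slabs, hence $\nabla c_\e$ H\"older by Sobolev embedding; then the first equation becomes a uniformly parabolic equation for $\rho_\e$ with H\"older coefficients (the coefficient $\frac{\rho_\e(1-\e\rho_\e)}{c_\e}$ and the drift $\nabla$ of it are now H\"older, uniformly in $\e$), so the Schauder estimates of Ladyzhenskaya--Solonnikov--Ural'tseva yield $\|\rho_\e\|_{C^{2+\theta,1+\theta/2}(\overline\Omega\times[t,t+1])}\le C$ for $t\ge t_\eta+3$; bootstrapping once more gives the same for $c_\e$. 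The extra two units of time ($t_\eta+3$ rather than $t_\eta+1$) absorb the loss inherent in passing from $L^p$ to Schauder estimates on nested time intervals.

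The main obstacle is the drift term in the $\rho_\e$-equation: it is a divergence-form term whose coefficient involves $\nabla c_\e$, so to even start the $L^\infty$ bootstrap for $\rho_\e$ one needs $\nabla c_\e$ already at a sub-critical integrability, and the gain per iteration is only finite — I must check carefully that $p_0$ and $q_0$ were chosen (as they were, with $q_0<p_0<\frac{dq_0}{q_0-d}$) so that the Neumann-heat-semigroup estimate $\|e^{t\Delta}\nabla\cdot w\|_{L^p}\lesssim (1+t^{-1/2-\frac d2(1/r-1/p)})\|w\|_{L^r}$ produces a locally integrable singularity at each step, i.e. $\frac12+\frac d2(\frac1r-\frac1p)<1$, so that Lemma \ref{lmint} applies and the exponential factor $e^{-\lambda_1(t-s)}$ survives. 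Keeping every constant independent of $\e$ throughout this finite iteration, using only the $\e$-uniform bounds $0\le 1-\e\rho_\e\le1$ and $c_\e\ge c_*$, is the delicate bookkeeping point; everything else is routine parabolic regularity.
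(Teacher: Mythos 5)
Your proposal is correct and follows essentially the same route as the paper: variation-of-constants plus the $L^p$--$L^q$ Neumann semigroup estimates (Lemmas \ref{lmpq} and \ref{lmint}) to upgrade \eqref{help5b}--\eqref{help6b} to $L^\infty$ decay of $\nabla c_\e$ first and then of $\rho_\e-\M$, with $\e$-independence coming only from $0\le1-\e\rho_\e\le1$ and $c_\e\ge c_*$, followed by semigroup smoothing and parabolic Schauder theory for the $C^{2+\theta,1+\theta/2}$ bounds. The only cosmetic difference is that your finite integrability-raising iteration for $\rho_\e$ is not needed: since $\nabla c_\e$ is already in $L^\infty$ with decay, the flux lies in $L^{q_0}$ with $q_0>3=d$, so a single Duhamel step with the $e^{t\Delta}\nabla\cdot$ estimate (singularity $\tfrac12+\tfrac{3}{2q_0}<1$) closes the $L^\infty$ bound at once, exactly as in \eqref{uestimate0}.
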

\begin{proof}
	For the sake of simplicity we assume $t_\eta=0$. Then, by Lemma \ref{lmpq}-(ii) and \eqref{help5b}
	\begin{align}\label{vestimate0}
	&\|\nabla c_\e(t)\|_{L^\infty(\Omega)}=\|\nabla v_\e(t)\|_{L^\infty(\Omega)}\non\\
	\leq&\|\nabla e^{t(\Delta-1)}v_I\|_{L^\infty(\Omega)}+\int_0^t\|\nabla e^{(\Delta-1)(t-s)}u_\e(s)\|_{L^\infty(\Omega)}ds\nonumber\\
	\leq & Ce^{-(\lambda_1+1)t}(1+t^{-\frac12})\|v_I\|_{L^\infty(\Omega)}+C\int_0^t(1+(t-s)^{-\frac12-\frac{3}{2q_0}})e^{-(\lambda_1+1)(t-s)}\|u_\e(s)\|_{L^{q_0}(\Omega)}ds\nonumber\\
	\leq&Ce^{-(\lambda_1+1)t}(1+t^{-\frac12})+C\int_0^t(1+(t-s)^{-\frac12-\frac{3}{2q_0}})e^{-(\lambda_1+1)(t-s)}e^{-\mu's}(1+s^{-1+\frac{3}{2q_0}})ds\nonumber\\
	\leq& Ce^{-(\lambda_1+1)t}(1+t^{-\frac12})+Ce^{-\mu't}(1+t^{-\frac12})\non\\
	\leq& Ce^{-\mu't}(1+t^{-\frac12}).
	\end{align}which indicates when $t\geq1,$
	\begin{equation}\label{vestimate1a}
	\|\nabla c_\e(t)\|_{L^\infty(\Omega)}\leq Ce^{-\mu't}.
	\end{equation} In addition, thanks to \eqref{help5b}-\eqref{help6b}, the fact $0\leq1-\e(u_\e+\M)\leq1$ and the strictly positive uniform-in-time and uniform-in-$\e$ lower bounds of $c_\e$, we infer that when $t\geq1$
	 \begin{align}\label{uestimate0}
	&\|\rho_\e(t)-\M\|_{L^{\infty}(\Omega)}=\|u_\e(t)\|_{L^\infty(\Omega)}\non\\
	\leq& \|e^{(t-1)\Delta}u_\e(1)\|_{L^\infty(\Omega)}+\M\int_1^t\bigg\|e^{\Delta(t-s)}\nabla\cdot\left(\left(1-\e(u_\e+\M)\right) \frac{\nabla v_\e}{c_\e}\right)\bigg\|_{L^\infty(\Omega)}ds\non\\
	&+\int_1^t\left\|e^{\Delta(t-s)}\nabla\cdot\left(\left(1-\e(u_\e+\M)\right) \frac{u_\e\nabla v_\e}{c_\e}\right)\right\|_{L^\infty(\Omega)}ds\nonumber\\
	\leq& k_1e^{-\lambda_1(t-1)}(1+t^{-\frac{3}{2q_0}})\|u_\e(1)\|_{L^{q_0}(\Omega)}+C\int_1^t(1+(t-s)^{-\frac12-\frac{3}{2p_0}})e^{-\lambda_1(t-s)}\|\nabla v_\e(s)\|_{L^{p_0}(\Omega)}ds\nonumber\\
	&+C\int_1^t(1+(t-s)^{-\frac12-\frac{3}{2q_0}})e^{-\lambda_1(t-s)}\|u_\e(s)\|_{L^{q_0}(\Omega)}\|\nabla v_\e(s)\|_{L^{\infty}(\Omega)}ds\nonumber\\
	\leq&Ce^{-\lambda_1t}+C\int_1^t(1+(t-s)^{-\frac12-\frac{3}{2p_0}})e^{-\lambda_1(t-s)}(1+s^{-\frac12+\frac{3}{2p_0}})e^{-\mu's}ds\nonumber\\
	&+C\int_1^t(1+(t-s)^{-\frac12-\frac{3}{2q_0}})e^{-\lambda_1(t-s)}(1+s^{-1+\frac{3}{2q_0}})e^{-2\mu's}ds\nonumber\\
	\leq&Ce^{-\lambda_1t}+C\int_0^t(1+(t-s)^{-\frac12-\frac{3}{2p_0}})e^{-\lambda_1(t-s)}(1+s^{-\frac12+\frac{3}{2p_0}})e^{-\mu's}ds\nonumber\\
	&+C\int_0^t(1+(t-s)^{-\frac12-\frac{3}{2q_0}})e^{-\lambda_1(t-s)}(1+s^{-1+\frac{3}{2q_0}})e^{-2\mu's}ds\nonumber\\
	\leq& Ce^{-\lambda_1t}+Ce^{-\mu't}(1+t^{-\frac12})\nonumber\\
	\leq& Ce^{-\mu't}
	\end{align}where we note that $p_0\in(3,\frac{3q_0}{3-q_0})$ guarantees that $\frac12+\frac{3}{2p_0}\in(0,1)$ and $\frac12-\frac{3}{2p_0}\in(0,1).$ 
	
	In the same manner as above, due to the smoothing effect of the Neumann semigroup we can further prove by standard bootstrap argument that there is $\theta_1\in(0,1)$ and $C>0$ independent of $\e$ such that for $t\geq2$ (see, e.g., \cite{Wr04})
	\begin{equation}\non
	\|\rho_\e(t)\|_{C^{\theta_1,\frac{\theta_1}{2}}(\overline{\Omega}\times[t,t+1])}+\|c_\e(t)\|_{C^{\theta_1,\frac{\theta_1}{2}}(\overline{\Omega}\times[t,t+1])}\leq C.
	\end{equation}Then applying the parabolic Schauder theory to the second equation yields a bound for $c_\e$ in $C^{2+\theta_2,1+\frac{\theta_2}{2}}(\overline{\Omega}\times[t,t+1])$ for some $\theta_2\in(0,1)$ and all $t\geq2$ which in turn indicates a bound for $\rho_\e$ in  in $C^{2+\theta_3,1+\frac{\theta_3}{2}}(\overline{\Omega}\times[t,t+1])$ for some $\theta_3\in(0,1)$ and all $t\geq3.$ This completes the proof.
\end{proof}

Now we may extract a subsequence $\e_j$ such that by passing  to the limit, we obtain a weak solution $(\rho,c)$ for the original problem \eqref{chemo1} in the sense of Definition \ref{defwk} and moreover, the preceding higher order estimates given in Lemma \ref{lm46} and the Arzel\`a--Ascoli theorem indicates that  there is $\tau_0\geq t_\eta+3$ such that $(\rho,c)\in C^{2,1}(\overline{\Omega}\times[\tau_0,\infty)$ due to uniqueness of the limit. In addition, there holds
\begin{equation}\label{upperbound}
	\sup_{t\geq\tau_0}\left(\|\rho(\cdot,t)\|_{L^\infty(\Omega)}+\|c(\cdot,t)\|_{W^{1,\infty}(\Omega)}\right)<\infty.
\end{equation}

It remains to show the convergence of the eventual smooth  solutions. In fact, since $(\rho,c)$ is smooth for  $t\geq\tau_0>0$, we can repeat the above arguments for systems \eqref{app2a}, \eqref{app2al} with $\e=0$ and recover Proposition \ref{propB} in the case $\e=0$ for $(u,v)$ instead of $(u_\e,v_\e)$.

In the same manner as before, one can first show that there exists a time $\tau_\eta\geq\tau_0$ such that 
\begin{equation}\non
\|\rho(\tau_\eta)-\M\|_{L^{3/2}(\Omega)}+\|\nabla c(\tau_\eta)\|_{L^3(\Omega)}+\|c(\tau_\eta)-\M\|_{L^\infty(\Omega)}\leq \eta<\eta_0
\end{equation}
and \eqref{help5}-\eqref{help6} hold with $t_\eta=\tau_\eta$ for all $t\geq \tau_\eta$ by replacing $(\rho_\e,c_\e)$ by $(\rho,c)$. Moreover, for $t\geq \tau_1\geq\tau_\eta+1$, there holds
\begin{equation*}
\|\rho(t)-\M\|_{L^{q_0}(\Omega)}+\|\nabla  c(t)\|_{L^{p_0}(\Omega)}\leq C e^{-\mu't},
\end{equation*}
which together with the standard $L^p-L^q$ decay estimates for the Neumann heat semigroup will  finally give rise to 
\begin{equation}\label{decay000a}
\|\rho(t)-\M\|_{L^{\infty}(\Omega)}+\|\nabla c\|_{L^\infty(\Omega)}\leq Ce^{-\mu' t},\;\;\;\|c(t)-\M\|_{L^{\infty}(\Omega)}\leq Ce^{-\min\{\mu',1\} t}\qquad\text{for}\;\;t\geq\tau_1.
\end{equation}
Indeed, introduce the reduced quantities $\rho-\M=u$ and $c-\M=v$. For simplicity, we take $\tau_\eta=0$ and hence  \eqref{help5} and \eqref{help6} hold true for $(\rho,c)$ instead of $(\rho_\e,c_\e)$ and $t_\eta=0$. Exploiting the variation-of-constants formula, the uniform boundedness \eqref{upperbound} and uniform-in-time lower bound for $c$, we deduce that
\begin{align}\label{uestimate}
&\|u(t)\|_{L^\infty(\Omega)}\non\\
\leq& \|e^{t\Delta}u_I\|_{L^\infty(\Omega)}+\int_0^t\|e^{\Delta(t-s)}\Delta v(s)\|_{L^\infty(\Omega)}ds+\int_0^t\left\|e^{\Delta(t-s)}\nabla\cdot\left(\frac{u(s)-v(s)}{c(s)}\nabla v(s)\right)\right\|_{L^\infty(\Omega)}ds\nonumber\\
\leq& k_1e^{-\lambda_1t}\|u_I\|_{L^\infty(\Omega)}+C\int_0^t(1+(t-s)^{-\frac12-\frac{3}{2p_0}})e^{-\lambda_1(t-s)}\|\nabla v(s)\|_{L^{p_0}(\Omega)}ds\nonumber\\
\leq&Ce^{-\lambda_1t}+C\int_0^t(1+(t-s)^{-\frac12-\frac{3}{2p_0}})e^{-\lambda_1(t-s)}(1+s^{-\frac12+\frac{3}{2p_0}})e^{-\mu's}ds\nonumber\\
\leq& Ce^{-\lambda_1t}+Ce^{-\mu't}\nonumber\\
\leq& Ce^{-\mu't}
\end{align}where we note that $p_0\in(3,\frac{3q_0}{3-q_0})$ guarantees that $\frac12+\frac{3}{2p_0}\in(0,1)$ and $\frac12-\frac{3}{2p_0}\in(0,1).$

On the other hand, by Lemma \ref{lmpq}-(ii)
\begin{align}\label{vestimate}
&\|\nabla v(t)\|_{L^\infty(\Omega)}\non\\
\leq&\|\nabla e^{t(\Delta-1)}v_I\|_{L^\infty(\Omega)}+\int_0^t\|\nabla e^{(\Delta-1)(t-s)}u(s)\|_{L^\infty(\Omega)}ds\nonumber\\
\leq & Ce^{-(\lambda_1+1)t}(1+t^{-\frac12})\|v_I\|_{L^\infty(\Omega)}+C\int_0^t(1+(t-s)^{-\frac12-\frac{3}{2q_0}})e^{-(\lambda_1+1)(t-s)}\|u(s)\|_{L^{q_0}(\Omega)}ds\nonumber\\
\leq&Ce^{-(\lambda_1+1)t}(1+t^{-\frac12})+C\int_0^t(1+(t-s)^{-\frac12-\frac{3}{2q_0}})e^{-(\lambda_1+1)(t-s)}e^{-\mu's}(1+s^{-1+\frac{3}{2q_0}})ds\nonumber\\
\leq& Ce^{-(\lambda_1+1)t}(1+t^{-\frac12})+Ce^{-\mu't}(1+t^{-\frac12})\non\\
\leq& Ce^{-\mu't}(1+t^{-\frac12}).
\end{align}which indicates when $t\geq1,$
\begin{equation}\label{vestimate1}
\|\nabla v(t)\|_{L^\infty(\Omega)}\leq Ce^{-\mu't}.
\end{equation}

For the convergence of $\|c(t)-\M\|_{L^\infty(\Omega)}$, we need to notice that  $\overline{c}(t)=\overline{c_I}e^{-t}+\M (1-e^{-t})$ and use the fact
\begin{align*}
\|c(t)-\M\|_{L^\infty(\Omega)}\leq& \|c(t)-\overline{c(t)}\|_{L^\infty(\Omega)}+\|\overline{c(t)}-\M\|_{L^\infty(\Omega)}\non\\
\leq& C\|\nabla c(t)\|_{L^\infty(\Omega)}+Ce^{-t}\leq C e^{-\min\{\mu',1\}t}.
\end{align*}
This completes the proof of Theorem \ref{TH2}.\qed
\begin{remark}
	The convergence rate is optimal in a sense that $\mu'$ can be chosen arbitrarily close to $\lambda_1$ which is the decay exponent of the linearized system obtained in Lemma \ref{lmexdecaypp}.
\end{remark}

\end{document}